\newcommand\ch{\mathrm{ch}}
\newcommand{\Rmnum}[1]{\expandafter\@slowromancap\romannumeral #1@}
\def \red {\textcolor{red} }
\newtheorem{theorem}{Theorem}[section]
\newtheorem{lemma}[theorem]{Lemma}
\newtheorem{definition}[theorem]{Definition}
\newtheorem{corollary}[theorem]{Corollary}
\newtheorem{claim}[theorem]{Claim}
\newtheorem{remark}[theorem]{Remark}
\newcommand{\intt}{{\rm int}}
\newcommand{\ext}{{\rm ext}}
\begin{document}
	
	\title{Planar graphs having no cycle of length  $4$, $6$ or $8$ are DP-3-colorable}

	\vspace{3cm}
	\author{Ligang Jin\footnotemark[1]~,
 Yingli Kang\footnotemark[2]~,  Xuding Zhu\footnotemark[1]}

\footnotetext[1]{School of Mathematical Sciences,
	Zhejiang Normal University, Yingbin Road 688,
	321004 Jinhua,
	China; 
	ligang.jin@zjnu.cn (Ligang Jin), xdzhu@zjnu.edu.cn (Xuding Zhu)}

	\footnotetext[2]{Department of Mathematics, Jinhua University of Vocational Technology, Western Haitang Road 888, 321017 Jinhua, China; ylk8mandy@126.com (Yingli Kang)}

	\date{}
	
	\maketitle

\begin{abstract}
The concept of DP-coloring of graphs was introduced by Dvo\v{r}\'{a}k and Postle, and was used to prove that planar graphs without cycles of length from $4$ to $8$ are $3$-choosable. In the same paper, they proposed a more natural and stronger claim that such graphs are DP-$3$-colorable. This paper confirms that claim by proving a stronger result that planar graphs having no cycle of length $4$, $6$ or $8$ are DP-3-colorable. 
\end{abstract}

\textbf{Keywords}: DP-coloring; Erd\H{o}s problem; $S_3$-signed graphs; reducible configurations; discharging

\section{Introduction}

 Steinberg \cite{Steinberg1993211} conjectured in 1976 that   every planar graph without cycles of length 4 or 5 is 3-colorable. This conjecture received considerable attention and was finally disproved in 2017 by Cohen-Addad,   Hebdige,   Kr\'{a}l,  Li, and  Salgado \cite{CA-disprove}.

Motivated by Steinberg's conjecture, Erd\H{o}s  asked  whether  there exists a constant  $k$ such that every planar graph without cycles of length from 4 to $k$ is 3-colorable? If so, what is the smallest constant $k$?
It was proved by
Abbott and Zhou \cite{AbbottZhou1991203}  that such an integer exists and $k\leq 11$. 
The upper bound on the smallest constant $k$ was   improved in a sequence of papers, and the current known best upper bound is  
$k\leq 7$, obtained by   Borodin, Glebov,  Raspaud, and   Salavatipour \cite{4567}. It remains open whether $k=6$ or $k=7$.

 Erd\H{o}s' problem has a natural list coloring version, that has also been studied extensively in the literature.   Abbott and Zhou \cite{AbbottZhou1991203}  actually showed that planar graphs without cycles of length $4$ to $11$ are $3$-choosable, and the question is to find the smallest integer $k'$ such that planar graphs without cycles of length  from $4$ to $k'$ are 3-choosable. It was proved by Voigt \cite{Voigt2007} that $k' \ge 6$ and the following result of 
 Dvo\v{r}\'{a}k  and Postle  gives the current 
  known best upper bound for $k'$. 
  
  \begin{theorem}[\cite{Dvorak-Postle-2018}]
      \label{thm-dp}
      Every planar graph without cycles of length from 4 to 8 is 3-choosable. 
  \end{theorem}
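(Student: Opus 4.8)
The plan is to prove the formally stronger statement that every planar graph without cycles of length from $4$ to $8$ is DP-$3$-colorable; this suffices because DP-$3$-colorability implies $3$-choosability. Passing to the DP (correspondence) setting is not merely a strengthening but a methodological necessity: the inductive reductions below repeatedly delete, contract, or identify parts of the graph, and whereas the resulting instance need not be an honest list-coloring instance, the class of DP-assignments is closed under exactly these operations, so the induction hypothesis survives. I would argue by contradiction, fixing a counterexample $G$ together with a cover (DP-assignment) $\mathcal{H}$ with all lists of size $3$ that admits no independent transversal, chosen so that $|V(G)|+|E(G)|$ is minimum. Minimality immediately yields that $G$ is $2$-connected and has minimum degree at least $3$, since a vertex of degree at most $2$ is blocked by at most $2$ colors and may be colored last.

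The next step is to assemble a list of reducible configurations. The girth hypothesis already forces strong local sparsity: two triangles sharing an edge, or two triangles linked by a short path, would create a cycle of length between $4$ and $8$, so triangles are edge-disjoint and mutually far apart, and in the plane embedding every face is either a triangle or has length at least $9$. I would then prove that a minimal counterexample cannot contain certain clusters built from $3$-vertices and triangles---for instance short chains of degree-$3$ vertices, or triangles incident to too many degree-$3$ vertices---each time by removing the configuration, $3$-coloring the smaller instance by minimality, and extending the partial transversal. It is exactly in these extension steps that the correspondence framework earns its keep, because collapsing a configuration yields a modified cover, not a plain list assignment.

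Finally I would run a discharging argument on the embedding. Assigning initial charge $d(v)-4$ to each vertex $v$ and $d(f)-4$ to each face $f$, Euler's formula makes the total charge $-8$, with the deficit carried by $3$-vertices and triangular faces (each $-1$) and the surplus carried by faces of length at least $9$ (at least $+5$) and higher-degree vertices. I would design rules that move charge from these large faces and vertices, across the sparse triangle pattern, so that every $3$-vertex and every triangle ends nonnegative, deriving the contradiction that the total is nonnegative. I expect the genuine obstacle to lie here, and to stem from the weakness of the girth condition: because triangles are allowed, one must control $3$-faces and long faces simultaneously, and a crude charge count will not close. Overcoming this most likely requires replacing the naive charge by a finer \emph{potential}---a weighting measuring how far a subgraph is from extendability---proving reducibility relative to that potential and only then discharging. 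This potential method, together with the DP strengthening that makes the identifications legitimate, is the delicate core on which the whole argument turns.
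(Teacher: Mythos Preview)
Your high-level plan---pass to DP-colorability, take a minimal counterexample, assemble reducible configurations, and discharge---is the right shape, and is indeed what the paper does (in fact for the stronger class avoiding only $\{4,6,8\}$-cycles). But two concrete ingredients are missing, and without them the argument does not close.

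First, the inductive hypothesis you set up is too weak. The paper does not induct on bare DP-$3$-colorability; it proves an \emph{extension theorem}: if $G$ is in the class and the outer face $f_0$ has boundary of length at most $12$, then every DP-$3$-coloring of $\partial f_0$ extends to $G$. This precolored-boundary form is what makes the machine run, because it immediately yields that a minimum counterexample has no separating $12^-$-cycle. Virtually every reduction in the paper depends on that lemma: when you delete a configuration and identify two vertices (or add an edge), you must argue that no forbidden short cycle is created, and the argument is always ``otherwise the new short cycle, unwound in $G$, would give a separating $12^-$-cycle.'' In your framework there is no way to split along such a cycle and hand each side a compatible instance, so this tool is unavailable and the identification-type reductions you allude to cannot be justified.

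Second, your diagnosis that the discharging will require a ``finer potential'' is off the mark. The paper uses the standard initial charge $d(x)-4$ (plus $+8$ on the outer face) and ordinary redistribution rules; no potential function appears. The genuine subtlety is combinatorial rather than analytic: triangles are aggregated into maximal clusters (``snowflakes'') joined through $4$-vertices incident to two triangles, and one verifies nonnegativity for each snowflake as a unit. The hard work lies in a long catalogue of reducible configurations (eight infinite families in the paper, with $I_k$- and $J_k$-extensions) tailored so that every possible snowflake shape meets one of them, together with special rules routing small amounts of charge through designated ``nice'' $9$-faces. Anticipating a potential method would lead you away from the actual proof.
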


Theorem \ref{thm-dp} answers a  question   posed by Borodin  \cite{Borodin2013} in 1996. 
Nevertheless, it turns out that the more important impact of the work of Dvo\v{r}\'{a}k  and Postle    is   the concept of DP-coloring introduced in \cite{Dvorak-Postle-2018}, which has attracted considerable attention and  has motivated a lot of research. 

  \begin{definition}
	\label{def-cover}
	A {\em  cover} of a  graph $G $ is a pair $(L,M)$, where $L = \{L(v)\colon v \in V(G)\}$ is a family of  disjoint sets, and $M=\{M_{e}\colon e \in E(G)\}$, where for each edge $e=uv$, $M_e$ is a matching between $L(u)  $ and $L(v)$.   For  $f \in \mathbb{N}^G$, we say $(L,M)$ is an {\em $f$-cover} of $G$ if $|L(v)|\ge f(v)$ for each vertex $v \in V(G)$.
\end{definition}

\begin{definition}
	\label{def-coloring}
	Given a cover $(L,M)$ of a graph $G$, an {\em $(L,M)$-coloring} of $G$ is a mapping $\phi\colon V(G) \to \bigcup_{v \in V(G)}L(v)$ such that for each vertex $v \in V(G)$, $\phi(v) \in L(v)$, and for each edge $e=uv \in E(G)$, $\phi(u)\phi(v) \notin E(M_e)$. We say $G$ is {\em $(L, M)$-colorable} if it has an $(L,M)$-coloring. 
   We say $G$ is {\em DP-$f$-colorable}, where  $f \in  \mathbb{N}^G$,   if for every   $f$-cover $(L,M)$, $G$ has an $(L,M)$-coloring. The {\em  DP-chromatic number} of $G$ is defined as 
 $$\chi_{DP}(G)=\min\{k\colon G \text{ is DP-$k$-colorable} \}.$$
\end{definition}

Given an $f$-list assignment $L'$ of a graph $G$, let $(L,M)$ be the   $f$-cover of $G$, where $L(v)=\{(i,v)\colon i \in L'(v)\}$ and  $M=\{M_{uv}\colon uv \in E(G)\}$, where for each edge $uv$ of $G$,  $$M_{uv}=\{\{(i,u),(i,v)\}\colon i \in L'(u) \cap L'(v)\}.$$
It is obvious that $G$ is $L'$-colorable if and only if $G$ is $(L,M)$-colorable. Therefore  if $G$ is DP-$f$-colorable, then it is $f$-choosable, and hence $ch(G) \le \chi_{DP}(G)$.

The advantage of transforming a list coloring problem to a DP-coloring problem is that the information of the lists are encoded in the matchings $M_e$ for edges $e \in E(G)$. There are tools one can use in the study of DP-coloring of graphs that are not applicable in the setting of list coloring. For example, one can identify non-adjacent vertices in the study of DP-coloring, that is not applicable in the study of list coloring. It is by using such tools that  Dvo\v{r}\'{a}k  and Postle were able to prove that planar graphs without cycles of length from $4$ to $8$ are 3-choosable.

As DP-3-colorable graphs are 3-choosable, and DP-coloring technique is used to prove that planar graphs without cycles of length from 4 to 8 are 3-choosable, it seems  more natural to prove that these planar graphs are DP-3-colorable. However, in the proof in \cite{Dvorak-Postle-2018}, the family of matchings $M_e$ are restricted to be  consistent on
closed walks of length 3. This is enough to conclude that the graphs in concern are 3-choosable, but not enough to conclude that  they are   DP-3-colorable. 
It was proved in  \cite{Liu-loeb-Yin-Yu}  that every planar graph without cycles of length from $\{4,5,6,9\}$ is DP-3-colorable. The problem whether all planar graphs without cycles of length from 4 to 8 are DP-3-colorable is proposed in \cite{Dvorak-Postle-2018} and remains open.

In this paper, we solve this problem and prove the following result.

 \begin{theorem} \label{thm468-DP}
 	Every planar graph having no cycle of length 4, 6 or 8 is DP-3-colorable.
 \end{theorem}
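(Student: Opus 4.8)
The plan is to prove Theorem~\ref{thm468-DP} by the standard discharging method for proving colorability of planar graphs, adapted to the DP-coloring setting. I would argue by contradiction: suppose the theorem is false, and let $G$ be a counterexample minimizing $|V(G)|+|E(G)|$, embedded in the plane. Thus $G$ has no cycle of length $4$, $6$, or $8$, and there is a $3$-cover $(L,M)$ for which $G$ has no $(L,M)$-coloring, but every proper subgraph of $G$ is DP-$3$-colorable with respect to any $3$-cover. The overall structure has two parts: first establish that $G$ cannot contain certain local substructures (\emph{reducible configurations}), and second derive a contradiction via a discharging argument showing that a graph avoiding all those configurations cannot exist in the plane.

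For the reducibility part, I would first record the easy structural facts forced by minimality. Since every proper subgraph is DP-$3$-colorable, $G$ has minimum degree at least $3$ (a vertex of degree $\le 2$ could be deleted and its color extended, because each edge's matching forbids at most one color, leaving a free color in a list of size $3$). More refined configurations must also be shown reducible: typically one shows that $G$ has no two adjacent vertices of small degree in certain configurations, and that various short-cycle/face patterns (triangles sharing vertices with low-degree vertices, specific arrangements of $3$-faces and $5$-faces, etc.) cannot occur. The key DP-coloring tool here, highlighted in the introduction, is that one may \emph{identify} non-adjacent vertices: when two non-adjacent vertices can be merged so that their covers are made compatible, a coloring of the smaller graph lifts to $G$. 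I would use this, together with counting arguments on how many colors each matching can forbid, to verify reducibility of each configuration on a case-by-case basis.

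For the discharging part, I would assign to each vertex $v$ the charge $\deg(v)-4$ and to each face $f$ the charge $\ell(f)-4$, where $\ell(f)$ is the length of the boundary walk. By Euler's formula the total charge is $-8<0$. The absence of $4$-, $6$-, and $8$-cycles is crucial: it forces every face to have length $3$, $5$, $7$, or at least $9$, and in particular two triangular faces cannot share an edge and the distribution of odd faces is tightly constrained. I would then design discharging rules (sending charge from faces of length $\ge 5$ and from higher-degree vertices toward triangles and degree-$3$ vertices) so that, using the list of reducible configurations excluded above, every vertex and every face ends with nonnegative final charge. Since discharging preserves the total charge, this yields total charge $\ge 0$, contradicting the value $-8$, and completes the proof.

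The main obstacle I anticipate is the reducibility analysis rather than the discharging bookkeeping. In ordinary $3$-coloring one has full freedom in choosing extension colors, but under a general $3$-cover the matchings $M_e$ can be arbitrary, so an extension argument must succeed against every possible matching pattern; this is exactly why the consistency restriction in \cite{Dvorak-Postle-2018} sufficed for $3$-choosability but not for DP-$3$-colorability. Proving a configuration reducible therefore requires showing that for \emph{all} choices of matchings one can either find a free color or carry out a vertex-identification that remains colorable, which is combinatorially delicate and is where the new content of the argument lies. Getting a rich enough family of reducible configurations to make the discharging close will be the crux, and I would expect to iterate between strengthening the configuration list and adjusting the discharging rules until the charge accounting balances.
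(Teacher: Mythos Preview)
Your outline captures the general shape of a discharging proof, but it is missing the structural ingredient that actually makes the argument go through: the paper does not attack Theorem~\ref{thm468-DP} directly, but instead proves the stronger \emph{extension} statement Theorem~\ref{thm_main_extension-signed} (any $3$-coloring of a precolored outer face of length $\le 12$ extends to the whole graph), and only then deduces Theorem~\ref{thm468-DP}. The reason this reformulation is essential is that a minimal counterexample to the extension theorem automatically has no separating $12^-$-cycle (Lemma~\ref{lem_separating-cycle}): one extends the precoloring to $\ext[C]$ and then, by the induction hypothesis applied with $C$ as the new outer face, to $\intt[C]$. In your setup, where $G$ is merely a minimal counterexample to DP-$3$-colorability, this argument fails: coloring $\intt[C]$ and $\ext[C]$ separately gives two colorings that need not agree on $C$, so you cannot exclude separating short cycles.

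This is not a cosmetic difference; it is exactly what powers the vertex-identification reductions you single out as the key DP tool. When you identify two non-adjacent vertices $x,y$ (or add an edge between them) and invoke minimality, you must verify that the smaller graph still has no $4$-, $6$-, or $8$-cycle. A new short cycle through the identified vertex corresponds to a short $x$--$y$ path in $G$, which together with the path through the configuration yields a separating $12^-$-cycle; every reducibility proof in the paper (Lemmas~\ref{lem_type-b}--\ref{lem_type-h}) invokes Lemma~\ref{lem_separating-cycle} at precisely this point. Without the extension framework you have no mechanism to rule such cycles out, and the reducibility arguments you plan to rely on cannot be carried out. Your discharging skeleton (charges $d(v)-4$, $d(f)-4$, total $-8$) is also slightly off from the paper's (which reserves $d(f_0)+4$ for the precolored outer face and aims for a strictly positive total), but that is a consequence of the same issue: the outer boundary and the $2$-vertices on it play a special role that your setup does not accommodate.
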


\section{$S_3$-signed graphs and configurations}

 We denote by $\mathcal{G}$ the family of connected plane graphs having no cycle of length $4$, $6$ or $8$. We shall prove the following result that implies Theorem \ref{thm468-DP}.

\begin{theorem}   \label{thm_main_extension}
	Assume $G \in \mathcal{G}$  with infinite face $f_0$, and $(L,M)$ is a 3-cover of $G$.  If the boundary of $f_0$ has length at most 12, then every $(L,M)$-3-coloring of $G[V(f_0)]$  extends to an $(L,M)$-3-coloring of $G$.
\end{theorem}

We may assume that $L(v)=\{1,2,3\} \times \{v\}$ for each vertex $v$, and assume that for each edge $uv$ of $G$, $M_{uv}$ is a perfect matching between $L(u)$ and $L(v)$.
For each edge $uv$, the matching $M_{uv}$ can be represented by a permutation $\sigma_{(u,v)}$ of $\{1,2,3\}$, defined as $\sigma_{(u,v)}(i) = j$ if $(i,u)(j,v) \in M_{uv}$. Instead of a family $M=\{M_{uv}\colon uv \in E(G)\}$ of matchings, we have a family $  \sigma=\{\sigma_{(u,v)}\colon uv \in E(G)\}$ of permutations of $\{1,2,3\}$, satisfying  $\sigma_{(v,u)}= \sigma_{(u,v)}^{-1}$. An $(L,M)$-coloring of $G$ is equivalent to a mapping $\phi\colon V(G) \to \{1,2,3\}$ such that for each edge $uv$, $\phi(v) \ne \sigma_{(u,v)}(\phi(u))$. We call the pair $(G, \sigma)$ an {\em $S_3$-signed graph} (where $\sigma_{(u,v)}$ is viewed as a sign of the arc $(u,v)$), and call the mapping $\phi$ a proper {\em 3-coloring} of $(G,\sigma)$.

The following theorem is an equivalent formulation of Theorem \ref{thm_main_extension}.

\begin{theorem}   \label{thm_main_extension-signed}
    Let $(G,\sigma)$ be an $S_3$-signed graph with $G \in \mathcal{G}$.  If the boundary of the infinite face $f_0$ has length at most 12, then every 3-coloring of $(G[V(f_0)],\sigma)$  extends to a 3-coloring of $(G,\sigma)$.
\end{theorem}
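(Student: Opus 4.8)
The plan is to argue by contradiction, using the reducible-configurations-plus-discharging machinery in the $S_3$-signed reformulation, which is why I work with Theorem~\ref{thm_main_extension-signed} rather than Theorem~\ref{thm_main_extension} directly. Suppose the statement fails, and let $(G,\sigma)$ together with a $3$-coloring $\phi_0$ of $(G[V(f_0)],\sigma)$ be a counterexample minimizing $|V(G)|$ and, subject to that, $|E(G)|$. First I would extract the cheap global structure of such a minimal counterexample. One shows $G$ is $2$-connected (a cut vertex would split the instance into strictly smaller ones that could be solved and glued), so the boundary of $f_0$ is a cycle $C_0$ with $|C_0|\le 12$, and every internal face is bounded by a cycle; since $G$ has no cycle of length $4$, $6$, or $8$, each such face length lies in $\{3,5,7\}\cup\{9,10,11,\dots\}$. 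Moreover every vertex not on $C_0$ has degree at least $3$: a vertex of degree at most $2$ in the interior could be deleted, the coloring extended by minimality, and that vertex then recolored, since in the signed language its neighbors forbid at most two of its three colors. The same surgery rules out chords and short separating cycles of $C_0$, as each would cut the disk into strictly smaller precoloring-extension instances.

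The core of the proof is a list of reducible configurations, each asserting that a particular local pattern of low-degree vertices and short faces cannot appear in the minimal counterexample. The reductions are the genuinely DP-specific part: to dispose of a configuration I would delete a carefully chosen set $S$ of interior vertices, extend the coloring to $G-S$ by minimality, and then complete the coloring on $S$. Because we are coloring an $S_3$-signed graph rather than working with arbitrary lists, the admissible colorings of $S$ are controlled by how the permutations $\sigma_{(u,v)}$ compose around the short faces and paths meeting $S$; the resulting slack, together with the operation of identifying two non-adjacent vertices that the DP-framework permits but list-coloring does not, is exactly what lets these completions go through. Configurations incident to $C_0$ must be treated separately, since there the completion has to respect the fixed precoloring $\phi_0$, and these boundary cases are the most delicate.

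Finally I would run a discharging argument. Assign to each vertex $v$ the charge $d(v)-4$ and to each face $f$ the charge $d(f)-4$; by Euler's formula the total charge is the fixed constant $-8$. Here the objects carrying negative charge are precisely the degree-$3$ vertices and the triangular faces (each with charge $-1$), while faces of length at least $5$ and vertices of degree at least $5$ carry positive charge. I would design discharging rules sending charge from these rich objects to the poor ones so that, in the absence of every reducible configuration, each interior vertex and each interior face finishes with nonnegative charge. The entire remaining deficit is then confined to the outer face $f_0$ and the vertices of $C_0$; bounding this residual deficit by means of $|C_0|\le 12$ yields a total strictly exceeding $-8$, contradicting the value computed from Euler's formula.

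The step I expect to be the main obstacle is the reducibility analysis, for two linked reasons. First, because $5$-cycles and $7$-cycles are permitted---this is exactly what makes the hypothesis weaker, and hence the conclusion stronger, than Theorem~\ref{thm-dp}---faces of length $5$ and $7$ contribute only small positive charge, so the discharging is tight and one is forced to isolate a long and interlocking family of forbidden local patterns. Second, proving each pattern reducible in the signed setting is harder than in the choosability setting: one cannot simply name the colors to be avoided, and must instead reason about the composition of the permutations $\sigma_{(u,v)}$ around short faces and about identifications of non-adjacent vertices to manufacture the needed freedom. Making the discharging rules balance against precisely this list of reducible configurations---so that the bookkeeping closes at $-8$---is where the real difficulty lies.
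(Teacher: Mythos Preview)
Your high-level plan is correct and matches the paper's approach: minimal counterexample, basic structural lemmas (2-connectedness, no separating $12^-$-cycles, internal minimum degree $3$), a list of reducible configurations proved via deletions and identifications in the $S_3$-signed setting, and a discharging argument with initial charges $d(\cdot)-4$ (the paper shifts $f_0$ by $+8$ so the target sum is $0$ rather than $-8$, but this is cosmetic).

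One point is worth flagging because it is precisely where a naive execution of your plan would stall. You write that after discharging ``each interior vertex and each interior face finishes with nonnegative charge.'' In the paper this is \emph{not} achieved elementwise: an internal $3$-vertex on a triangle, or an internal $4$-vertex sitting between two triangles, cannot be brought up to zero on its own with the available rules. The paper's key device is to aggregate each maximal cluster of triangles linked by such $4_{\bowtie}$-vertices into a single object (a \emph{snowflake}) and verify $ch^*(S)\ge 0$ for the snowflake as a whole, after also routing small amounts of charge from certain ``nice'' $9$-faces into $S$. The reducible configurations (the families $I_k$, $J_k$, and the labelled configurations a--h) are tailored exactly so that the snowflake inequality closes in every residual case. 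Your proposal correctly anticipates that the reducibility list and the discharging balance are the crux, but you should be aware that the balance is achieved at the level of these clusters rather than at the level of individual vertices and triangles; this is the structural idea your outline is missing.
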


The proof of Theorem \ref{thm_main_extension-signed} is by induction.  Assume $(G,\sigma)$ is a counterexample, with $|V(G)|$ minimum. 
We first prove that a family of configurations are reducible, i.e., none of them can  be contained in $G$. In the next section,   by using discharging method, we derive a contradiction.

In the remainder of the paper, we consider proper 3-colorings of $S_3$-signed graphs $(G, \sigma)$ for $G \in \mathcal{G}$ (which is equivalent to DP-3-colorings of graphs $G \in \mathcal{G}$). For convenience, we may denote an  $S_3$-signed graphs $(G, \sigma)$ by $G$. The signs $\sigma(e)$ of edges in $G$ are specified   when needed.

By a {\em switching} at a vertex $v$, we mean choose a permutation $\tau$ of $\{1,2,3\}$, and for each edge $uv$ incident to $v$, 
replace   $\sigma_{(u,v)}$ with $\tau \circ \sigma_{(u,v)}$ 
(and hence replace  $\sigma_{(v,u)}$ with $\sigma_{(v,u)}\circ \tau^{-1}$).
Switching at a vertex $v$ just changes the names of colors for $v$, and does not change the colorability of $G$.

An edge $uv$ is called {\em straight} if $\sigma_{(u,v)} =id$, where $id$ is the identity permutation. 
\begin{remark}
\label{remark1}
    For any set $E'$ of edges that induces an acyclic subgraph of $G$, by applying some switchings, if needed, we may assume that all edges in $E'$ are straight. 
\end{remark}

A   cycle $C$ is called {\em positive} if there exist a sequence of switchings that make all edges in $C$ straight.  Otherwise,  $C$ is {\em negative}.

A vertex on the boundary of $f_0$ is an {\it external} vertex, and other vertices are {\it internal}. 
Denote by $|P|$ the length of a path $P$ (which is the number of edges of $P$), $|C|$ the length of a cycle $C$, and $d(f)$ the size of a face $f$.  
A \textit{$k$-vertex} (resp., $k^+$-vertex and $k^-$-vertex) is a vertex $v$ with $d(v)=k$ (resp., $d(v)\geq k$ and $d(v)\leq k$). 
The notations of $k$-path, $k$-cycle, $k$-face etc. are defined similarly.  
A $k$-cycle with vertices $v_1,\ldots,v_k$ in  cyclic order is denoted by $[v_1\ldots v_k]$.
Let $d_1,d_2,d_3$ be three integers with $3\leq d_1 \leq d_2 \leq d_3$. A {\it $(d_1,d_2,d_3)$-face} is a 3-face $[v_1v_2v_3]$ such that $v_i$ is an internal $d_i$-vertex for  $i\in\{1,2,3\}$.

\begin{itemize}
\item A {\it bad vertex} is a vertex incident with a positive $(3,3,3)$-face. 
\item  A  {\it $3_{\Delta}$-vertex}  is an internal 3-vertex incident with a 3-face.
\item A {\it $4_{\bowtie}$-vertex}   is an internal 4-vertex incident with two non-adjacent 3-faces.
\item A {\em $C$-vertex} is a vertex which is neither a 2-vertex nor a $3_{\Delta}$-vertex nor a $4_{\bowtie}$-vertex.
\end{itemize}

Assume $u$ is a $3_{\Delta}$-vertex on a 3-face $f$. The neighbor of $u$ not on $f$ is called the {\it outer neighbor} of $u$ (also of $f$). We say $u$ is 
\begin{enumerate}
    \item a {\it $3_{\Delta^+}$-vertex} if $f$ is positive and contains at least two $3_{\Delta}$-vertices;
    \item a {\it $3_{\Delta^-}$-vertex} if $f$ is negative and contains at least two $3_{\Delta}$-vertices;
    \item a {\it $3_{\Delta^{\circ}}$-vertex} if $u$ is the only $3_{\Delta}$-vertex on $f$.
\end{enumerate}  
 A {\it $3_{\Delta^{\star}}$-vertex} is a $3_{\Delta^{\circ}}$-vertex whose outer neighbor is not a bad vertex.

Two vertices $u$ and $v$ are {\it $\bowtie$-connected} if there exists a $u$-$v$-path whose interior vertices are all $4_{\bowtie}$-vertices.

Let $f_1,\dots,f_k$ be internal 3-faces. The union $S=\bigcup_{i=1}^kf_i$  is called a {\it snowflake} if  the following  hold:
\begin{enumerate}[(1)]
	\setlength{\itemsep}{0pt}
	\item For each $4_{\bowtie}$-vertex $w$ of $S$, both  3-faces containing $w$ belong to $S$;
	\item Any two nonadjacent vertices of $S$ are $\bowtie$-connected.
\end{enumerate}

Note that two snowflakes of $G$  may share vertices but they are edge-disjoint.
For each snowflake $S$, denote by $3_{\Delta^+}(S)$ the set of $3_{\Delta^+}$-vertices of $S$ and similarly, we define $3_{\Delta^-}(S)$, $3_{\Delta^{\circ}}(S)$, $3_{\Delta^{\star}}(S)$, $3_{\Delta}(S)$, $4_{\bowtie}(S)$, and $C(S)$. 
 Let $T(S)$ be the set of 3-faces of $S$, and for $u\in C(S)$, let $t(S,u)$ denote the number of 3-faces of $S$ containing $u$. Let
 \begin{itemize}
     \item 
 $C_1(S)=\{v \in C: v \text{ is an external $3$-vertex or an external $4$-vertex incident with two} \\ \text{non-adjacent 3-faces} \}.$
 \item  $C_2(S)=C(S)\setminus C_1(S)$. 
 \item  
For $i\in \{1,2\}$, $t_i(S)=\sum_{u\in C_i(S)} t(S,u)$.
\end{itemize}

\begin{definition}
    \label{def-conf}
A {\em configuration} is a 4-tuple
$\mathcal{H}=(H, \tau, \theta, Z)$ such that  $(H, \tau)$ is an $S_3$-signed plane graph,  $\theta$ is a mapping $ V(H) \to \mathbb{N} \cup \{\star\}$, and $Z$ is a subset of $V(H)$. We say an $S_3$-signed plane graph $(G, \sigma)$ contains $(H, \tau, \theta, Z)$ as a configuration if $(H, \tau)$ is   an induced  $S_3$-signed subgraph (with the same plane embedding), vertices in $Z$ are internal vertices, and for $v\in V(H)$, $  
d_G(v) =   \theta(v)$   if $\theta(v)$ is an integer.
If $\theta(v) = \star$, then there is no restriction on $d_G(v)$.

   A configuration $(H, \tau, \theta,Z)$ is called {\em reducible} if  any minimal counterexample $(G, \sigma)$ does not contain configuration $(H, \tau, \theta, Z)$. 
\end{definition}

Note that vertices of $H$ not in $Z$ can be either internal or external vertices.

If $(G, \sigma)$ contains a configuration
$(H, \tau, \theta, Z)$, we say $(G, \sigma)$ is the {\em host signed graph} of $(H, \tau, \theta, Z)$.
A vertex $v$ in a configuration $(H, \tau, \theta, Z)$ is called a $k$-vertex if $\theta(v)=k$, i.e., $v$ has degree $k$ in the host graph. 

We shall often represent a configuration by a figure, and the value $\theta(v)$ and the set $Z$ is indicated by the ``shape'' of the vertex $v$: a solid triangle, a solid square, and a solid circle stands for an internal 3-vertex, an internal 4-vertex, and an arbitrary vertex, respectively. There will be no other type of vertices. The signs $\tau$ on a set of acyclic edges  is irrelevant, as by switching we may assume all the edges are straight. The sign of a cycle (usually a triangle)  is labelled by P, if all edges can be made  straight by a switching, or N otherwise. An unlabelled triangle means that it can be either positive or negative. So the signature $\tau$ is omitted and only some triangles are labelled by P or N. 
The embedding of a configuration is also important (it matters if a path is on the boundary of a face or not), which will be indicated in the figure.

If the mappings $\tau$, $\theta$  and $Z$ are clear from the context, we simply call $H$ a configuration.

\begin{definition}
For $k \ge 1$, 
\begin{enumerate}[(1)]
    \item $I_k$ is the configuration consisting of 
$k$ negative triangles $T_i=[u_{i-1}u_iw_i]$ ($i=1,2,\ldots, k$), where $w_1,w_2, \ldots, w_k, u_k$ are internal 3-vertices  and $u_1,u_2,\ldots, u_{k-1}$ are internal 4-vertices. 
The vertex $u_0$ is an arbitrary vertex, and is called the {\em port} of $I_k$. 
\item $J_k$ is the configuration consisting of $2k$ triangles 
$T_i=[u_{i-1}u_iw_i]$ and $T'_i=[x_iy_iz_i]$ ($i=1,2,\ldots, k$),  where each $T'_i$ is positive, $w_i$ is adjacent to $z_i$, vertices $w_i,x_i,y_i,z_i$ are internal 3-vertices, and $u_1,u_2, \ldots, u_{k-1}$ are internal 4-vertices. The vertices $u_0$ and $u_k$ are arbitrary vertices, and are called the two {\em ports} of $J_k$. 
\end{enumerate}    
\end{definition}

\begin{figure}[h]
	\centering
	\includegraphics[width=15cm]{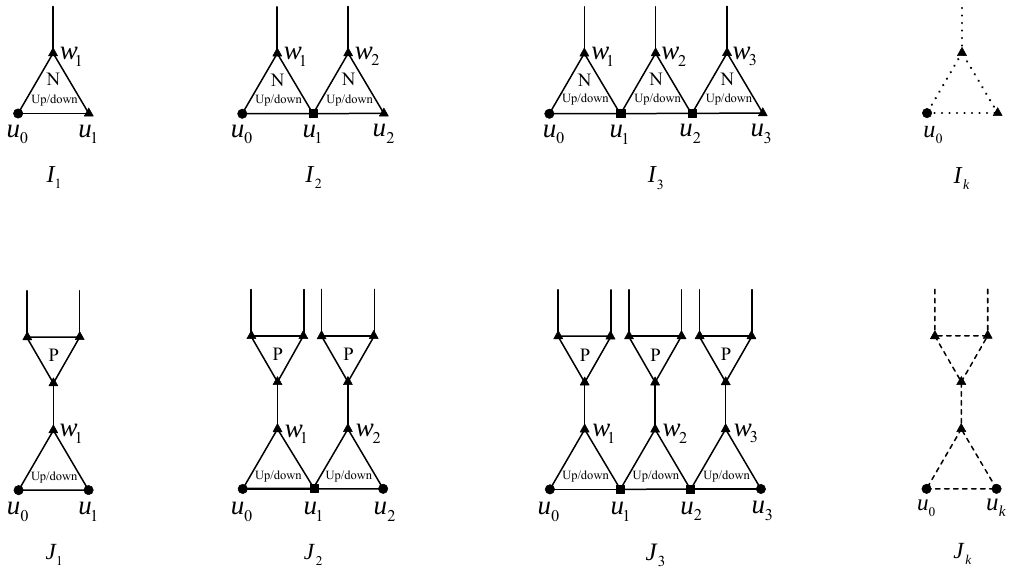}
	\caption{Configurations $I_k$ and $J_k$.  ``Up/down'' indicates that in the plane, the triangle may locate in either side of the path $u_0u_1\ldots u_k$. }\label{fig-IJ-segment}
\end{figure}

 Figure \ref{fig-IJ-segment} are Configurations $I_1,I_2,I_3,J_1,J_2,J_3$ and drawings of general $I_k,J_k$.
 
Assume $(H, \tau, \theta, Z)$ is  a configuration and $v \in V(H)$ with $\theta(v) = d_H(v)+1$ (i.e., $v$ has one neighbor in $V(G)\setminus V(H)$).  Let $H'$ be obtained from the disjoint union of $H$ and $I_k$ by identifying $v$ with the port of $I_k$, and let $\theta'(v) = \theta(v)+1 = d_{H'}(v)$. For other vertices $u$ of $H'$ and edges $e$, $\tau'(e)$ and $\theta'(u)$ and status of $u$ are inherited from $H$ or $I_k$. Then the configuration $(H',\tau',\theta', Z')$ is called the {\em  $I_k$-extension of $H$ at $v$}, and is denoted as $H_{v,I_k}$.

Assume $(H, \tau, \theta, Z)$ is a configuration and $v \in V(H)$ with $\theta(v) = d_H(v)=4$ and $v$ is incident with two non-adjacent triangles, say $T_1$ and $T_2$.  Let $H'$ be obtained from the disjoint union of $H$ and $J_k$ by splitting $v$ into two vertices $v_1$ and $v_2$ so that each $v_i$ is incident with $T_i$ and identifying each $v_i$ with a port of $J_k$, and let $\theta'(v_1) = \theta'(v_2) =4$. For other vertices $u$ of $H'$ and edges $e$, $\tau'(e)$ and $\theta'(u)$ and status of $u$   are inherited from $H$ or $J_k$. Then the configuration $(H',\tau',\theta', Z')$ is called the {\em  $J_k$-extension of $H$ at $v$}, and is denoted as $H_{v,J_k}$.

\begin{definition}
    \label{def-9face}
    Assume  $f=[v_1v_2\ldots v_9]$ is a 9-face of $G$, and $x$ is a vertex of $f$. We say $f$ is a {\em nice $9$-face} and $x$ is a {\em nice vertex of $f$} if  one of the following holds:
\begin{enumerate}[(1)]
	\setlength{\itemsep}{0pt}
	\item $v_2,v_3,v_4,v_5$ are $4_{\bowtie}$-vertices, $v_1$ and $v_6$ are $3_{\Delta}$-vertices, and either $x=v_8$ is a $5^+$-vertex or   $x\in\{v_7,v_9\}$ is a $4^+$-vertex or an external 3-vertex;
	\item $v_4$ is a $4_{\bowtie}$-vertex, $v_3$ and $v_5$ are $3_{\Delta}$-vertices, $v_1,v_2,v_6,v_7$ are bad vertices, and   $x\in\{v_8,v_9\}$ is a $4^+$-vertex or an external 3-vertex;
	\item $v_4$ and $v_5$ are $4_{\bowtie}$-vertices, $v_3$ and $v_6$ are $3_{\Delta}$-vertices, $v_1,v_2,v_7,v_8$ are bad vertices, and $x=v_9$ is a $4^+$-vertex or an external 3-vertex.
\end{enumerate}
The snowflake containing $v_4$ is called the {\em related snowflake} of $f$.
 We  further call $x$ a {\em 2-nice vertex} of $f$ if $f$ satisfies   (1) above, a {\em 1-nice vertex} of $f$ otherwise.
\end{definition}

 It is easy to see that a nice 9-face is related to precisely one snowflake.
 
Consider a plane graph $G$. For $ Y \subseteq V(G)$ or $ Y \subseteq E(G)$, denote by $G[Y]$ the subgraph of $G$ induced by $Y$. For a subgraph $H$ of $G$, denote by $N_G(H)$ (shortly, $N(H)$) the set of vertices of $G-V(H)$ which has a neighbor in $H$.
For a cycle $C$,   $\intt(C)$ and $\ext(C)$ are the set of vertices in the interior and exterior of  $C$, respectively. Denote by $\intt[C]$ (resp., $\ext[C]$)   the subgraph of $G$ induced by   $\intt(C) \cup V(C)$  (resp., $\ext(C) \cup V(C)$).
A cycle $C$ is \textit{separating} if both $\intt(C)$ and $\ext(C)$ are nonempty.
A path $P$ and a vertex $v$ are {\it adjacent} if $v\notin V(P)$ and $v$ is adjacent to a vertex of $P$.
A path on $k$ 2-vertices is called a {\it $k$-string} if it is adjacent to no 2-vertices. Note that if a vertex $v$ is adjacent to a string, then it is adjacent to an end vertex of the string, as vertices in a string are 2-vertices in $G$.

\section{The proof of Theorem \ref{thm_main_extension-signed}}    \label{sec_proof}

To see that Theorem \ref{thm468-DP} follows from Theorem \ref{thm_main_extension-signed}, 
take any $S_3$-signed graph $(G, \sigma)$.
If $G$ has no triangles, then it has girth at least 5 and is known to be DP-$3$-colorable \cite{Dvorak-Postle-2018}. We may next assume that $G$ has a triangle $T$. Any 3-coloring of $(T,\sigma)$ can be extended to both $(\ext[T],\sigma)$ and $(\intt[T],\sigma)$ by Theorem \ref{thm_main_extension-signed}, which together result in a 3-coloring of $(G,\sigma)$.

The remainder of this paper is devoted to the proof of Theorem \ref{thm_main_extension-signed}.

Assume to the contrary that Theorem \ref{thm_main_extension-signed} is false.
Let $(G,\sigma)$ be a counterexample   with minimum $|V(G)|$. Thus the infinite face $f_0$ is a $12^-$-face, and there exists a 3-coloring $\phi_0$ of $(G[V(f_0)],\sigma)$ that cannot extend to $(G,\sigma)$.

Denote by $D$ the boundary of $f_0$.
By the minimality of $(G,\sigma)$, $D$ has no chords.

\subsection{Reducible configurations}

\begin{lemma} \label{lem_separating-cycle}
	$G$ has no separating $12^-$-cycles.
\end{lemma}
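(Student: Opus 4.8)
The plan is to use the minimality of the counterexample $(G,\sigma)$ together with Theorem \ref{thm_main_extension-signed} itself, applied to two smaller instances obtained by cutting along the separating cycle. Suppose toward a contradiction that $C$ is a separating cycle of length at most $12$. Since $C$ is separating, both $\intt(C)$ and $\ext(C)$ are nonempty, so each of the two plane subgraphs $\ext[C]$ and $\intt[C]$ has strictly fewer vertices than $G$. Both subgraphs inherit the property of belonging to $\mathcal{G}$ (they are connected plane graphs with no $4$-, $6$-, or $8$-cycle, since any such cycle in a subgraph is a cycle in $G$), and both inherit the signature $\sigma$ restricted to their edges.

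The key step is to produce, for each piece, a valid boundary coloring of length at most $12$ that extends, so that the non-extendability of $\phi_0$ in $G$ can be contradicted. First I would take the given coloring $\phi_0$ of $(G[V(f_0)],\sigma)$ and restrict attention to the outer piece $\ext[C]$, whose infinite face still has boundary $D$ of length at most $12$. By minimality applied to $\ext[C]$ (which is a smaller member of $\mathcal{G}$), the coloring $\phi_0$ of the outer boundary extends to a $3$-coloring $\psi$ of all of $(\ext[C],\sigma)$. In particular $\psi$ gives a proper $3$-coloring of $(C,\sigma)$, and $C$ now plays the role of the boundary of the infinite face of the inner piece $\intt[C]$: its length is at most $12$, so by minimality applied to $\intt[C]$ the coloring $\psi|_{V(C)}$ extends to a $3$-coloring $\chi$ of $(\intt[C],\sigma)$. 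Gluing $\psi$ and $\chi$ along their common vertices $V(C)$ yields a proper $3$-coloring of $(G,\sigma)$ that agrees with $\phi_0$ on $V(f_0)$, contradicting the choice of $(G,\sigma)$ as a counterexample.

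The main obstacle I anticipate is checking that the two smaller graphs genuinely satisfy the hypotheses of Theorem \ref{thm_main_extension-signed} so that minimality can be invoked as an inductive hypothesis on each. Concretely one must verify that $\ext[C]$ and $\intt[C]$ are connected (here the embedding of $C$ as a face boundary of each piece, and the fact that $G$ itself is connected, should be used), and that in each piece the relevant cycle ($D$ for the outer piece, $C$ for the inner piece) really is the boundary of the infinite face and has length at most $12$. For the inner piece one should note that reembedding so that $C$ bounds the infinite face is legitimate since we only need the abstract statement of the theorem for the subgraph. I would also confirm that restricting $\sigma$ does not alter the sign of any cycle lying entirely within a piece, so that ``$3$-coloring of the signed subgraph'' means the same thing in the subgraph as in $G$; this is immediate since the sign of a cycle depends only on the permutations on its own edges.

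A subtle point worth flagging is the degenerate possibility that $C$ shares vertices with $D$; if $C = D$ then $C$ is not separating in the relevant sense because $\ext(C)$ would be empty, so the genuine separating hypothesis guarantees both interiors are nonempty and the vertex counts strictly drop. As long as both pieces are strictly smaller, the induction closes and the lemma follows.
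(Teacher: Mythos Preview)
Your proposal is correct and follows exactly the same approach as the paper: extend $\phi_0$ to $(\ext[C],\sigma)$ by minimality, then extend the resulting coloring of $C$ to $(\intt[C],\sigma)$, and glue. The paper's own proof is a one-sentence version of what you wrote; the subtleties you flag (connectivity of the two pieces, $C$ bounding the infinite face of $\intt[C]$) are routine and are left implicit there.
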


\begin{proof}
If $C$ is a  separating $12^-$-cycle of $G$, then by the minimality of $(G,\sigma)$, we can extend $\phi_0$ to $(\ext[C],\sigma)$ and then extend the resulting coloring of $C$ to $(\intt[C],\sigma)$.  
\end{proof}

\begin{lemma}\label{lem_2connected}
	$G$ is 2-connected. Consequently, the boundary of each face is a cycle.  
\end{lemma}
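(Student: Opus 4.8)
The plan is to argue by contradiction inside the minimal counterexample $(G,\sigma)$, using two facts available for free: a proper $3$-coloring of an $S_3$-signed graph is a graph-plus-signature property that does not depend on the plane embedding, and by minimality every $S_3$-signed graph in $\mathcal{G}$ with fewer than $|V(G)|$ vertices satisfies Theorem~\ref{thm_main_extension-signed} (and hence, by the reduction at the start of this section, is DP-$3$-colorable). Suppose $G$ is not $2$-connected. Since $G$ is connected it has a cut vertex $v$, and I would write $G=G_1\cup G_2$ with $V(G_1)\cap V(G_2)=\{v\}$, where $A_1,A_2$ partition the components of $G-v$ and each $G_i=G[\{v\}\cup A_i]$ is connected, induced, and has fewer vertices than $G$. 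The argument then splits according to how the outer walk $D$ meets $v$.

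The main clean case is when $D$ is not a cycle. Then $D$ repeats a vertex, which must be a cut vertex whose two sides both touch $f_0$; take it to be $v$. Splitting $D$ at $v$ gives two closed subwalks $D_1,D_2$ with $D_i\subseteq G_i$, each through $v$, and $|D_1|+|D_2|=|D|\le 12$, so $|D_i|\le 12$; deleting $A_2$ leaves the $G_1$-side of $f_0$ untouched, so $D_i$ is exactly the infinite-face boundary of $G_i$ and $V(D_i)=V(G_i)\cap V(f_0)$. Applying minimality to each $(G_i,\sigma)$ with the precoloring $\phi_0|_{V(D_i)}$ (a restriction of $\phi_0$, hence proper on the induced subgraph $G_i[V(D_i)]$) yields extensions $\phi_1,\phi_2$. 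Since $v\in V(D_1)\cap V(D_2)\subseteq V(f_0)$, both satisfy $\phi_i(v)=\phi_0(v)$, so $\phi_1,\phi_2$ agree on the unique shared vertex $v$; as every edge of $G$ lies in $G_1$ or $G_2$, their union is a proper $3$-coloring extending $\phi_0$, a contradiction.

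Hence $D$ is a cycle, so it lies in a single block $B_0$ and every other block is internal (does not meet $f_0$). Here I would take an internal endblock $B$ attached at its unique cut vertex $v$, and set $G'=G-(V(B)\setminus\{v\})\supseteq D$. First color $G'$: its infinite-face boundary is still $D$, so minimality extends $\phi_0$ to a coloring $\psi$ of $(G',\sigma)$, giving $v$ a color $\psi(v)$; it then remains to extend $\psi(v)$ to $B$. If $B$ is an edge or a cycle this is immediate, since a path, or a cycle with one precolored vertex, always extends with three colors (color the rest one at a time, the last vertex forbidding at most two colors). If $B$ is $2$-connected with a vertex off its bounding cycle $W$ and $|W|\le 12$, then $W$ is a separating $12^-$-cycle, contradicting Lemma~\ref{lem_separating-cycle}. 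Moreover, because colorability is embedding-free, whenever $B$ has a face of length at most $12$ through $v$ I may take that face as the infinite one, extend $\psi(v)$ around it, and finish by minimality.

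This isolates the one configuration I expect to be the crux: an internal $2$-connected block all of whose faces are long, so that neither a short separating cycle (to invoke Lemma~\ref{lem_separating-cycle}) nor a short outer face (to invoke minimality) is directly available. I would dispose of it through the sparsity such a block is forced to have — it must contain many $2$-vertices — peeling these off (each forbids at most two colors) to reduce to a smaller instance, or by extracting a short separating cycle; this bookkeeping is where the real work lies. Once all cases yield a contradiction, $G$ is $2$-connected, and the final clause follows from the standard fact that in a $2$-connected plane graph the boundary of every face is a cycle.
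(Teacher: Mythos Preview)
Your decomposition into ``$D$ not a cycle'' versus ``$D$ a cycle, with an internal end-block $B$'' is sound, and the first case as well as the sub-cases where $B$ is an edge, a cycle, or has a $12^-$-face through $v$ are handled correctly. The genuine gap is exactly the sub-case you flag as the crux: $B$ is an internal $2$-connected end-block in which every face through $v$ has length greater than $12$. Your two proposed escape routes do not work. For the sparsity idea, observe that every vertex of $B-v$ is internal in $G$ and, since $B$ is an end-block, has all its $G$-neighbours in $B$; hence (by the easy fact, provable independently of this lemma, that internal vertices have degree at least $3$) its $B$-degree is at least $3$. So there are no $2$-vertices in $B$ to peel except possibly $v$ itself, and peeling $v$ only pushes the problem one step without creating a short outer face. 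For the separating-cycle idea, all cycles through $v$ in $B$ are long by hypothesis, and short cycles avoiding $v$ give no direct handle on extending the colour of $v$.

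The paper closes this gap with a one-line trick you are missing: take two \emph{consecutive} neighbours $x,y$ of $v$ in $B$ and insert a new edge $xy$ (with any sign). Since the shortest cycle of $B$ through $v$ exceeds $12$, any $x$--$y$ path in $B$ other than $xvy$ is long enough that the new edge creates no $4$-, $6$-, or $8$-cycle; thus $B+xy\in\mathcal{G}$. Now the triangle $[vxy]$ is a $3$-face that you may take as the outer boundary, extend $\psi(v)$ to it, and finish by minimality on $B+xy$. This edge-insertion is the key idea; without it (or an equivalent device), the argument does not close.
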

\begin{proof}
	Otherwise, we may assume that $G$ has a block $B$ and a cut vertex $v\in V(B)$ with $V(D) \cap V(B-v)=\emptyset$.  
	Let $H=G-(B-v).$
	By the minimality of $(G,\sigma)$, we can extend $\phi_0$ to $(H,\sigma)$.
	Let $C$ be a cycle of $B$ of  minimum length that contains $v$.
	If $|C|\leq 12$, then $C$ is a facial cycle, since $C$ has no chords by its minimality and $C$ is not separating by Lemma \ref{lem_separating-cycle}. We can extend the coloring of $v$ to a 3-coloring of $(C,\sigma)$ and further to $(B,\sigma)$ by the minimality of $(G,\sigma)$.
    If $|C| > 12$, then insert an edge with an arbitrary sign between any two consecutive neighbors (say $x$ and $y$) of $v$ in $B$. Note that $B+xy\in \mathcal{G}$. We can extend the coloring of $v$ to a 3-coloring of $([vxy],\sigma)$ and further to $(B+xy,\sigma)$. In either case, the resulting coloring of $(G,\sigma)$ is an extension of $\phi_0$, a contradiction. 
\end{proof}

Since $G\in\mathcal{G}$, the following corollary is a consequence of Lemmas \ref{lem_separating-cycle} and \ref{lem_2connected}.
\begin{corollary} \label{cor-facial}
	Every $k$-cycle of $G$ with $k\in\{3,5,7,9\}$ is facial.
\end{corollary}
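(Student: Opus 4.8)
The plan is to fix an arbitrary $k$-cycle $C$ of $G$ with $k\in\{3,5,7,9\}$ and show that one of the two regions $C$ bounds contains neither a vertex nor an edge of $G$, so that this region is a single face with boundary $C$. Since $k\le 9<12$, the cycle $C$ is a $12^-$-cycle, so Lemma \ref{lem_separating-cycle} implies that $C$ is not separating; hence at least one of $\intt(C)$ and $\ext(C)$ is empty. I will treat both cases in parallel, since the argument is insensitive to which side is vertex-free.

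The key step is to rule out chords of $C$. Suppose $uv$ were a chord. Then $uv$ together with the two arcs of $C$ joining $u$ and $v$ forms two cycles $C_1,C_2$ with $|C_1|+|C_2|=k+2$ and $|C_i|\ge 3$. A short case analysis over $k\in\{3,5,7,9\}$ shows that in every admissible split at least one of $|C_1|,|C_2|$ lies in $\{4,6,8\}$: for $k=3$ no chord exists at all; for $k=5$ the only split is $\{3,4\}$; for $k=7$ the splits are $\{3,6\}$ and $\{4,5\}$; and for $k=9$ they are $\{3,8\}$, $\{4,7\}$, and $\{5,6\}$. In each case $G$ would contain a cycle of length $4$, $6$, or $8$, contradicting $G\in\mathcal{G}$. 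Hence $C$ is chordless. Note that this is a purely graph-theoretic conclusion and does not depend on where a putative chord might be drawn.

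Finally I would combine the two facts. Say $\intt(C)=\emptyset$. Then the interior of $C$ contains no vertex of $G$, and since $C$ is chordless it contains no edge either, because any edge drawn inside $C$ would have both endpoints on $C$ and hence be a chord. Therefore the open interior region of $C$ is a single face of $G$, and by Lemma \ref{lem_2connected} its boundary is a cycle, necessarily $C$ itself; thus $C$ is facial. If instead $\ext(C)=\emptyset$, the same reasoning applied to the exterior region gives the conclusion. I do not anticipate a genuine obstacle: the statement is essentially bookkeeping built on the two preceding lemmas, and the only point requiring care is the length case analysis forbidding chords, together with the standard observation that a chordless, vertex-free side of a cycle is a face.
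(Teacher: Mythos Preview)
Your proof is correct and follows exactly the approach the paper intends: the paper merely states that the corollary is a consequence of Lemmas \ref{lem_separating-cycle} and \ref{lem_2connected} together with $G\in\mathcal{G}$, and your argument spells out precisely this, including the chord case analysis that the paper leaves implicit.
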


\begin{lemma} \label{lem_min degree}
Every internal vertex of $G$ has degree at least 3.
\end{lemma}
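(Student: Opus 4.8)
The plan is to run the standard deletion-and-extension reducibility argument. Suppose, for contradiction, that $G$ has an internal vertex $v$ with $d(v)=2$, and let $u,w$ be its two (distinct) neighbors. I would work with the graph $G-v$ and argue that it is a strictly smaller instance to which the minimality of the counterexample $(G,\sigma)$ applies, then recover a coloring of $v$ by a trivial counting step.

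First I would verify that $G-v$ (with the inherited plane embedding and the restriction of $\sigma$) is a legitimate member of $\mathcal{G}$ satisfying the hypotheses of Theorem~\ref{thm_main_extension-signed}. Deleting a vertex creates no new cycles, so $G-v$ still has no cycle of length $4$, $6$, or $8$; by Lemma~\ref{lem_2connected}, $G$ is $2$-connected, so $G-v$ is connected; hence $G-v \in \mathcal{G}$. Since $v$ is internal, it lies on no face boundary of the infinite face, so the infinite face of $G-v$ is still bounded by $D$, whose length is at most $12$, and moreover $V(f_0)$ is unchanged. Consequently $G[V(f_0)] = (G-v)[V(f_0)]$ and the given boundary coloring $\phi_0$ of $(G[V(f_0)],\sigma)$ is still a valid $3$-coloring of the boundary of $G-v$.

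Next, because $|V(G-v)| < |V(G)|$ and $(G,\sigma)$ was chosen with $|V(G)|$ minimum, the signed graph $G-v$ is not a counterexample. Therefore $\phi_0$ extends to a proper $3$-coloring $\phi$ of $(G-v,\sigma)$. It remains to extend $\phi$ to $v$. The only constraints are $\phi(v)\neq \sigma_{(u,v)}(\phi(u))$ and $\phi(v)\neq \sigma_{(w,v)}(\phi(w))$, forbidding at most two elements of the three colors available at $v$; hence some admissible color remains for $v$. This yields a proper $3$-coloring of $(G,\sigma)$ extending $\phi_0$, contradicting the choice of $(G,\sigma)$.

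I expect no genuine obstacle here: this is a routine ``a low-degree internal vertex is reducible'' argument, and the degree bound $d(v)=2 < 3 = |L(v)|$ makes the final greedy step immediate. The only points requiring care are the bookkeeping checks that deleting an internal vertex preserves membership in $\mathcal{G}$ and leaves the outer boundary $D$ and the coloring $\phi_0$ intact, so that the minimality of $(G,\sigma)$ can be invoked on $G-v$.
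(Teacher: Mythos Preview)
Your proposal is correct and follows essentially the same approach as the paper's proof, which simply extends $\phi_0$ to $(G-v,\sigma)$ by minimality and then colors $v$ greedily. The paper is much terser and omits the bookkeeping checks you spell out; your restriction to $d(v)=2$ (rather than $d(v)\le 2$) is harmless since $G$ is $2$-connected by Lemma~\ref{lem_2connected}.
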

\begin{proof}
 If $v$ is an internal vertex with $d(v)\leq 2$, then we can extend $\phi_0$ to $(G-v,\sigma)$, and then extend to $(G,\sigma)$ by coloring $v$ with a color not matched to the colors of its two neighbors.  
\end{proof}

\begin{lemma} \label{lem_string}
	If $f \ne f_0$ is a $k$-face of $G$ and $P$ is a $t$-string contained in the boundary of $f$, then $t < \lfloor  \frac{k-1}{2} \rfloor$.
\end{lemma}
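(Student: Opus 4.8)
The plan is to argue by contradiction: assume $t\ge\lfloor(k-1)/2\rfloor$ and produce a short separating cycle, contradicting Lemma \ref{lem_separating-cycle}. First I would pin down where the string lives. Write $P=w_1w_2\cdots w_t$ and let $u$ (resp.\ $v$) be the neighbor of $w_1$ (resp.\ $w_t$) not on $P$, so that $P^+:=uw_1\cdots w_tv$ is a subpath of the facial cycle $\partial f$ of length $t+1$. By Lemma \ref{lem_min degree} every internal vertex has degree at least $3$, so each $2$-vertex $w_i$ is external and lies on $D=\partial f_0$. Since $w_i$ has degree $2$ and lies on both $\partial f$ and $D$ while $f\neq f_0$, its two incident faces are exactly $f$ and $f_0$, so both its edges lie on $\partial f\cap D$. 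Hence the entire path $P^+$ is a common subpath of $\partial f$ and of $D$, and in particular $u,v\in V(\partial f)\cap V(D)$.

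Next I would build the cycle. Let $R$ be the $u$--$v$ subpath of $\partial f$ complementary to $P^+$ (so $|R|=k-t-1$), and let $R'$ be the $u$--$v$ arc of $D$ complementary to $P^+$ (so $|R'|=|D|-t-1$). Since $f\neq f_0$ the paths $R$ and $R'$ are distinct, and together they form a closed walk $C:=R\cup R'$ avoiding all the string vertices $w_1,\dots,w_t$; extracting a simple cycle from it yields a cycle separating the $w_i$ from the interior region bounded off by $R$. Under the contradiction hypothesis $t\ge\lfloor(k-1)/2\rfloor$ one checks $|R|=k-t-1\le\lfloor k/2\rfloor$, and since $f_0$ is a $12^-$-face we have $|R'|=|D|-t-1\le 11-t$; adding these and using $t\ge\lfloor(k-1)/2\rfloor$ gives $|C|\le 12$. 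As the external vertices $w_i$ lie outside $C$ while $R$ cuts off a nonempty interior, $C$ is a separating $12^-$-cycle, contradicting Lemma \ref{lem_separating-cycle}.

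Finally I would dispatch the degenerate situation in which $C$ bounds nothing in its interior, so $C$ is not separating. Then $R$ is a chord-path of $D$ having no interior vertex off $D$; but any interior vertex of $R$ would be an internal $2$-vertex, impossible by Lemma \ref{lem_min degree}, so $R$ must be the single edge $uv$. This edge is then either a chord of $D$, contradicting that $D$ is chordless, or it closes up $D=\partial f$, forcing $G$ to be a single cycle already entirely coloured by $\phi_0$ and hence not a counterexample. In either subcase we reach a contradiction, completing the argument.

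I expect the main obstacle to be the bookkeeping in the middle step: verifying that $R\cup R'$ really yields a \emph{simple} separating cycle of length at most $12$, which requires handling the possibility that $R$ and $R'$ meet outside $\{u,v\}$ (using the chordlessness of $D$ and $2$-connectedness to pass to an appropriate subcycle), and cleanly separating this generic case from the small degenerate configurations above. Once the correct cycle is isolated, the length estimate and the appeal to Lemma \ref{lem_separating-cycle} are routine.
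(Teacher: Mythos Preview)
Your separating-cycle route differs from the paper's and, where it succeeds, gives a slick shortcut. The paper never builds $C=R\cup R'$: it simply deletes $V(P)$, notes that the infinite face of $G'=G-V(P)$ has size $d(f_0)+k-2(t+1)\le d(f_0)\le 12$, extends $\phi_0$ over the few new boundary vertices, and invokes minimality on the smaller graph.

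The gap in your plan is not bookkeeping but a false step in the degenerate branch. You assert that if $\mathrm{int}(C)$ contains no vertex then every interior vertex of $R$ must be an internal $2$-vertex. That is wrong: such a vertex is internal and hence of degree at least $3$, but its extra incidences can be \emph{chords} of $C$, not edges into $\mathrm{int}(C)$. A concrete witness in $\mathcal{G}$: take $|D|=12$, $k=9$, $t=4$, write $R=ux_1x_2x_3v$ and $R'=uy_1\cdots y_6v$, and add the single edge $x_1y_1$. One checks that every cycle length lies in $\{3,9,10,11,12,13\}$, the $11$-cycle $C$ has vertex-free interior, yet $d(x_1)=3$. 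Your argument does not terminate here; to finish you would have to colour the $x_i$ directly from $\phi_0$, which is exactly the paper's deletion argument applied to $G'$. So the separating-cycle idea handles the generic situation neatly, but the degenerate case drives you back to deletion-plus-minimality anyway, and it is simpler to use that from the outset.
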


\begin{proof}
By Lemma \ref{lem_min degree}, $P$ is contained in the boundary of $f_0$.  Assume $t \ge  \lfloor  \frac{k-1}{2} \rfloor$.  Let $G'=G-V(P)$ and $f_0'$ be the infinite face of $G'$. Then $d(f_0')=d(f_0)+k-2(t+1)\leq d(f_0)$. We can first extend $\phi_0$ to $(f_0\cup f_0',\sigma)$ and then extend the coloring of $f'_0$ to $(G',\sigma)$.
\end{proof}


\begin{lemma}\label{lem_recolor}
	Let $[uvw]$ be a 3-face such that $d(u)=d(v)=3$. We may assume that edges of $u'uwvv'$ are all straight, where $u'$ and $v'$ are the other neighbor  of $u$ and $v$, respectively. Let $\phi$ be a 3-coloring of $u',v'$.
	\begin{enumerate}[(1)]
		\setlength{\itemsep}{0pt}
		\item If $uv$ is straight and $\phi(u')\neq \phi(v')$, then for any $c \in [3]$,   $\phi$ can be extended to $[uvw]$ with $\phi(w)=c$. 
		\item If $uv$ is not straight, then for   at least two colors $c \in [3]$,  $\phi$ can be  extended to $[uvw]$ with $\phi(w)=c$. 
	\end{enumerate}
\end{lemma}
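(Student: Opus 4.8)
The plan is to color $w$ first and thereby reduce each case to properly coloring the single edge $uv$ under restricted lists. Write $a=\phi(u')$ and $b=\phi(v')$, and set $\sigma:=\sigma_{(u,v)}$. Since the four edges $u'u,uw,wv,vv'$ are straight, fixing $\phi(w)=c$ turns the remaining task into choosing $\phi(u),\phi(v)$ subject to
$$\phi(u)\in L_u:=[3]\setminus\{a,c\},\qquad \phi(v)\in L_v:=[3]\setminus\{b,c\},\qquad \phi(v)\neq\sigma(\phi(u)).$$
So for each candidate color $c$ at $w$, the problem is exactly to properly list-color the edge $uv$ of the $S_3$-signed graph with lists $L_u,L_v$. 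The two parts correspond to whether $\sigma=\mathrm{id}$ (part (1), $uv$ straight) or $\sigma\neq\mathrm{id}$ (part (2), $uv$ not straight); indeed, with $uw,wv$ straight the switching-invariant sign of $[uvw]$ is governed by $\sigma$, matching the positive/negative dichotomy used elsewhere.

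For part (1), $\sigma=\mathrm{id}$, so the constraint is just $\phi(u)\neq\phi(v)$, and we are given $a\neq b$. A single edge with lists $L_u,L_v$ fails to be properly colorable only when both lists are equal singletons. First I would rule this out: $|L_u|=|L_v|=1$ forces $a\neq c$ and $b\neq c$, hence $a,b,c$ are pairwise distinct and $[3]=\{a,b,c\}$; then the unique element of $L_u$ (the color $\neq a,c$) differs from that of $L_v$ (the color $\neq b,c$) precisely because $a\neq b$. Thus for \emph{every} $c\in[3]$ the lists are never equal singletons, so an extension exists, proving (1).

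For part (2), $\sigma\neq\mathrm{id}$. Here the extension at a fixed $c$ can fail only in the tight case where both lists are singletons, say $L_u=\{p\}$ and $L_v=\{q\}$, and the single admissible pair is forbidden, i.e. $\sigma(p)=q$. In every other case one list has size $2$, and a count of matched (forbidden) pairs, at most $\min(|L_u|,|L_v|)$ against $|L_u|\,|L_v|$ candidate pairs, guarantees a coloring. So it suffices to show at most one color $c$ is \emph{bad}. I would split according to whether $a=b$. If $a\neq b$, only the single color $c\in[3]\setminus\{a,b\}$ can make both lists singletons, so at most one $c$ is bad. If $a=b$, then $p=q$ for every admissible $c$, so badness means $\sigma(p)=p$; since $\sigma\neq\mathrm{id}$ is a transposition or a $3$-cycle it has at most one fixed point, and as $c$ runs over $[3]\setminus\{a\}$ the value $p=p(c)$ runs bijectively over the two colors of $[3]\setminus\{a\}$, so again at most one $c$ is bad. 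Either way at least two colors $c$ admit an extension.

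The only genuinely delicate point is the counting in part (2): one must confirm that the ``both-lists-singleton'' failure occurs for at most one $c$. This is exactly where the hypothesis $\sigma\neq\mathrm{id}$ (bounding fixed points) and the case split on whether $a=b$ do the real work; the remainder is a finite, routine verification over the three colors.
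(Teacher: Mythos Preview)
Your proof is correct and complete; the paper simply states that the proof ``is a straightforward verification and hence omitted,'' and what you have written is exactly such a verification. Your case split on whether $a=b$ in part~(2), together with the fixed-point bound for $\sigma\neq\mathrm{id}$, is precisely the content the paper is suppressing.
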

The proof of Lemma \ref{lem_recolor} is a straightforward verification and hence omitted. 

   Lemma \ref{lem_recolor} (2)    says that if $uv$ is not straight, then any $3$-coloring of $u',v'$ forbids at most one color for $w$. 
   Thus pre-coloring $u'$ and $v'$  has the same effect as pre-coloring one  neighbor of $w$. This property is used in the proof of Lemma \ref{lem_I-segment} below and also in some later arguments.

\begin{lemma}\label{lem_I-segment}
    Let $u_0$ be the port of $I_k$. For any 3-coloring $\phi$ of $N(I_k) \setminus N(u_0)$, there exist at least two colors $c \in [3]$ such that $\phi$ can be  extended to $I_k$  so that $\phi(u_0)=c$.
\end{lemma}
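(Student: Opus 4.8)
The plan is to run a backward sweep along the spine $u_0u_1\cdots u_k$ (equivalently, to induct on $k$), peeling off the triangles $T_k,T_{k-1},\dots,T_1$ one at a time and propagating the set of colours that remain available for the current spine vertex. Each peeling step is a ``list version'' of Lemma~\ref{lem_recolor}(2); indeed for $k=1$ the whole statement is literally Lemma~\ref{lem_recolor}(2) applied to $T_1=[u_0u_1w_1]$ with $u_0$ as the apex and $u_1,w_1$ (both internal $3$-vertices) as the feet with precoloured neighbours $u_1',w_1'$.

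First I would fix the signs. Since the edges $\{u_{i-1}w_i,u_iw_i:1\le i\le k\}$ form the path $u_0w_1u_1w_2u_2\cdots w_ku_k$, which is acyclic, Remark~\ref{remark1} lets me switch so that all of them are straight. Then each $T_i$ has two straight edges, so its being negative forces the spine edge $u_{i-1}u_i$ to carry a non-identity sign $\pi_i\ne\mathrm{id}$. Each $w_i$ has exactly one further neighbour $w_i'$; when $w_i'\in N(I_k)\setminus N(u_0)$ it is coloured by $\phi$, and since $M_{w_iw_i'}$ is a perfect matching it forbids exactly one colour, so $w_i$ retains a list $L_i\subseteq\{1,2,3\}$ with $|L_i|\ge 2$ (and $|L_i|=3$ if $w_i'$ happens to be uncoloured). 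In the same way $u_k$ retains a list from $u_k'$ of size at least $2$.

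Now define, for $i$ from $k$ down to $0$, the set $R_i\subseteq\{1,2,3\}$ of colours $c$ for which $\phi$ extends to a proper colouring of $T_{i+1}\cup\cdots\cup T_k$ (together with the edges from its vertices to their coloured external neighbours) with $u_i=c$. Thus $R_0$ is exactly the set of admissible colours for the port $u_0$, and I must show $|R_0|\ge 2$. The base case is $R_k$, which equals the list of $u_k$ coming from $u_k'$, so $|R_k|\ge 2$. Because the only vertex of $T_i$ lying in $T_{i+1}\cup\cdots\cup T_k$ is $u_i$, once $u_i$ is coloured the two parts may be coloured independently, so $c\in R_{i-1}$ iff there are $b\in R_i$ and $d\in L_i$ with $b\ne\pi_i(c)$ (edge $u_{i-1}u_i$), $d\ne c$ (edge $u_{i-1}w_i$) and $d\ne b$ (edge $u_iw_i$). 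The heart of the argument is the claim that $|R_i|\ge 2$ implies $|R_{i-1}|\ge 2$. The unique colour $c_3\notin L_i$ always lies in $R_{i-1}$: for $c=c_3$ the constraint $d\ne c$ is vacuous, so any $b\in R_i\setminus\{\pi_i(c_3)\}$ (which exists as $|R_i|\ge 2$) together with some $d\in L_i\setminus\{b\}$ (which exists as $|L_i|\ge 2$) works. For the two colours $c_1,c_2$ of $L_i$, one checks that $c_1$ fails only if $R_i\subseteq\{\pi_i(c_1),c_2\}$ and $c_2$ fails only if $R_i\subseteq\{\pi_i(c_2),c_1\}$; so both failing would force $R_i\subseteq\{\pi_i(c_1),c_2\}\cap\{\pi_i(c_2),c_1\}$, and with $|R_i|\ge 2$ this requires $\{\pi_i(c_1),c_2\}=\{\pi_i(c_2),c_1\}$, which a direct check shows is impossible when $\pi_i\ne\mathrm{id}$. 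Hence at most one of $c_1,c_2$ fails, giving $|R_{i-1}|\ge 2$; iterating from $|R_k|\ge 2$ down to $R_0$ yields $|R_0|\ge 2$.

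The main obstacle is precisely this last combinatorial step, where the negativity of the triangles is used essentially: it is the condition $\pi_i\ne\mathrm{id}$ that rules out both colours of $L_i$ being killed simultaneously and thereby guarantees that ``at least two admissible colours'' is preserved along the whole spine. The only other points to verify are the degenerate situations in which some $w_i'$ coincides with another external neighbour or lies in $N(u_0)$ (so that $|L_i|=3$); these only enlarge the lists and make the propagation easier, and the absence of $4$-cycles in $G$ rules out the problematic coincidences in any case.
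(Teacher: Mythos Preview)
Your proof is correct and follows essentially the same route as the paper: an induction (equivalently, backward sweep) along the spine $u_0u_1\cdots u_k$, peeling off one negative triangle at a time and showing that the set of admissible colours for the current spine vertex has size at least $2$. The paper packages each peeling step as a citation of Lemma~\ref{lem_recolor}(2) (``$u_{k-1}$ can be treated as a $3$-vertex with one pre-coloured neighbour''), whereas you unpack that step and verify the $|R_i|\ge 2 \Rightarrow |R_{i-1}|\ge 2$ implication directly; your straightening (making the two $w_i$-edges straight and pushing the non-identity sign onto $u_{i-1}u_i$) differs from the one in Lemma~\ref{lem_recolor}, but since negativity of $T_i$ is a switching invariant this is immaterial, and your case analysis with $\pi_i\ne\mathrm{id}$ is sound.
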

\begin{proof}
    If $k=1$, then this is Lemma \ref{lem_recolor} (2).
    Assume $k \ge 2$ and the lemma holds for $I_{k-1}$. Let the vertices of $I_k$ be labelled as in Figure \ref{fig-IJ-segment}. Let $w'_k$ and $u'_k$ be the other neighbor of $w_k$ and $u_k$, respectively. Apply Lemma \ref{lem_recolor} to $G[\{u_{k-1},u_k,w_k,w'_k,u'_k\}]$, we conclude that there are two colors $c' \in [3]$ such that $\phi$ can be extended to $u_{k-1},u_k,w_k$ so that $\phi(u_{k-1})=c'$. 
    Thus $u_{k-1}$ can be treated as a 3-vertex with one pre-colored neighbor. By induction hypothesis, there are two colors $c \in [3]$ such that $\phi$ can be  extended to    $I_k$  so that $\phi(u_0)=c$.
\end{proof}

  \begin{figure}[ht]
	\centering
	\includegraphics[width=2.5cm]{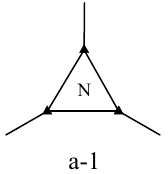}
	\caption{Configuration a-1}\label{fig-proof-a}
\end{figure}

\begin{lemma}
    \label{lem_type-a}
	If $I_k$ is contained in $G$, then the port vertex $u_0$ has degree at least $4$. In particular, $G$ has no Configuration a-1, see Figure \ref{fig-proof-a}.
\end{lemma}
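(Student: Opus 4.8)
The plan is to argue by contradiction. Assume $G$ contains $I_k$ but the port $u_0$ has $d(u_0)\le 3$, and produce a $3$-coloring of $(G,\sigma)$ extending $\phi_0$, contradicting the minimality of the counterexample. First I would eliminate the external case: since $G$ is $2$-connected (Lemma \ref{lem_2connected}), the boundary $D$ of $f_0$ is a cycle, so if $u_0$ lay on $D$ it would have two neighbors along $D$ in addition to its two neighbors $u_1,w_1$ on the triangle $T_1=[u_0u_1w_1]$; as $u_1,w_1$ are internal and hence off $D$, this gives $d(u_0)\ge 4$, a contradiction. Thus $u_0$ is internal, and Lemma \ref{lem_min degree} forces $d(u_0)=3$, so $u_0$ has a single neighbor $x\notin\{u_1,w_1\}$.

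Next I would carry out the reduction. All vertices of $I_k$ are now internal, so $\hat G=G-V(I_k)$ again lies in $\mathcal{G}$, keeps $D$ as the boundary of its infinite face, and has fewer vertices; by minimality $\phi_0$ extends to a $3$-coloring $\hat\phi$ of $\hat G$, which in particular colors every vertex of $N(I_k)$, including $x$. Applying Lemma \ref{lem_I-segment} to $\hat\phi$ restricted to $N(I_k)\setminus N(u_0)$ yields at least two colors $c\in[3]$ for which this coloring extends to all of $I_k$ with $\phi(u_0)=c$. The lone edge $u_0x$ forbids at most one color for $u_0$, so at least one of these two admissible colors survives; choosing it and taking the corresponding extension of $I_k$ produces the required coloring of $G$, the desired contradiction. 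Specialising to $k=1$ gives that Configuration a-1 of Figure \ref{fig-proof-a} is reducible.

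The step I expect to be the main obstacle is verifying that the extension supplied by Lemma \ref{lem_I-segment} is consistent with \emph{every} edge from $V(I_k)$ to $x$, since $x\in N(u_0)$ is exactly the vertex discarded when that lemma is applied. Here I would use a degree-saturation observation: for $1\le i\le k-1$ the $4$-vertex $u_i$ already has its four neighbors $u_{i-1},u_{i+1},w_i,w_{i+1}$ inside $I_k$, so it has no outer neighbor, and the only vertices of $I_k$ other than $u_0$ that $x$ could meet are $u_k$ and the apexes $w_i$. Any such coincidence closes a cycle through $x$, $u_0$ and the spine $u_0u_1\cdots$; rerouting each spine edge $u_{j-1}u_j$ through its apex $w_j$ realises every length in a full interval of consecutive integers, and for each possible coincidence this interval meets $\{4,6,8\}$, contradicting $G\in\mathcal{G}$ --- except in the single borderline case where the shortest such cycle has length $9$. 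Ruling out this residual $9$-cycle is the crux: I would first try to show it is separating, contradicting Lemma \ref{lem_separating-cycle}, and, if that fails, replace Lemma \ref{lem_I-segment} by a two-ended extension that constrains the colors of $u_0$ and $u_k$ against $\hat\phi(x)$ simultaneously, which the flexibility of the triangular snake still allows.
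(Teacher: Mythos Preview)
Your core reduction---delete $V(I_k)$, extend $\phi_0$ to $G-V(I_k)$ by minimality, invoke Lemma~\ref{lem_I-segment} to obtain two admissible colors at $u_0$, and observe that the single remaining edge $u_0u'_0$ kills at most one of them---is exactly the paper's proof; the preliminary external/internal split you add is harmless.

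The concern in your final paragraph is not something the paper treats at all: the paper simply applies Lemma~\ref{lem_I-segment} and then checks only the edge $u_0u'_0$, tacitly taking the outer neighbours $w'_i,u'_k$ to be distinct from $u'_0$. So the ``main obstacle'' you anticipate is not a divergence from the paper's argument but additional rigor beyond it; you do not need that analysis to match the paper, and your sketched forbidden-cycle/separating-cycle justification is already more than the paper offers at this point.
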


\begin{proof}
	Assume to the contrary that $I_k \subseteq G$ 
 and $d_G(u_0) =3$.   
	By the minimality of $(G,\sigma)$, $\phi_0$ can be extended to $(G-I_k,\sigma)$, say the resulting coloring $\phi$.  Let $u'_0$ be the neighbor of $u_0$ in $G-I_k$. 
	By Lemma \ref{lem_I-segment}, there are at least two colors $c \in [3]$ such that $\phi(G-I_k-u'_0)$ can be extended to $I_k$ so that $\phi(u_0)=c$.
 The coloring of $u'_0$ forbids one color for $u_0$. Hence, there is at least one  color $c \in [3]$ such that  $\phi$ can be extended to  $I_k$ so that $\phi(u_0)=c$.
\end{proof}

 \begin{figure}[ht]
	\centering
	\includegraphics[width=16cm]{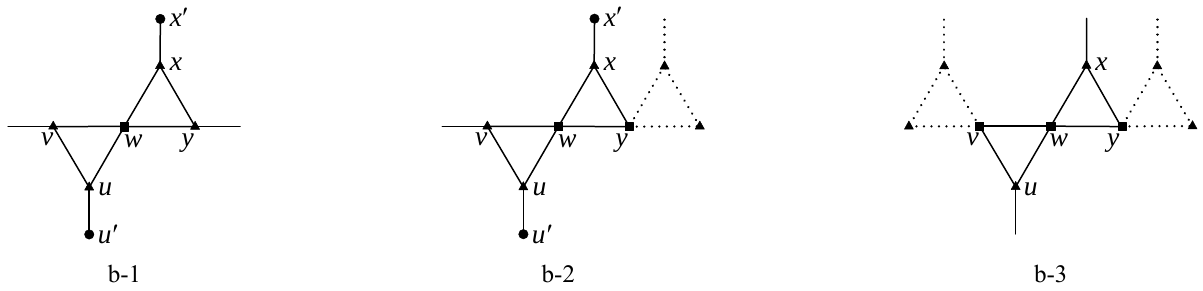}
	\caption{Configurations b-1, b-2, b-3}\label{fig-proof-b}
\end{figure}

\begin{lemma}\label{lem_type-b}
	$G$ has none of Configurations  b-1, b-2, b-3, see Figure \ref{fig-proof-b}.
\end{lemma}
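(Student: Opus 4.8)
The plan is to establish the reducibility of each of b-1, b-2 and b-3 by the same delete-and-extend strategy that proved Lemma \ref{lem_type-a}. Assume for contradiction that $G$ contains configuration b-$i$ for some $i \in \{1,2,3\}$, and let $X$ be the set of its internal, degree-prescribed vertices (those drawn as solid triangles and solid squares); any arbitrary vertices, such as ports, are retained. Because every vertex of $X$ is internal, and hence off the boundary $D$ of $f_0$, deleting these vertices does not disturb the pre-coloured outer cycle: $G' = G - X$ still lies in $\mathcal{G}$, still contains $G[V(f_0)]$, and has fewer vertices than $G$. By the minimality of $(G,\sigma)$, the colouring $\phi_0$ extends to a $3$-colouring $\phi$ of $(G',\sigma)$. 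It then suffices to extend $\phi$ across the deleted triangles, which I would do triangle by triangle, working from the ``outer'' end of the configuration inward.

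The engine of the extension is Lemma \ref{lem_recolor} read through the reformulation recorded just after it: for a $3$-face $[uvw]$ with $d(u)=d(v)=3$, once the two outer neighbours $u',v'$ are coloured, the apex $w$ forfeits at most one colour, and it forfeits none when $uv$ is straight and $\phi(u')\neq\phi(v')$. I would order the uncoloured vertices so that each, when reached, has at most two already-coloured neighbours and therefore at least one admissible colour; wherever the configuration carries an $I_k$-tail I would invoke Lemma \ref{lem_I-segment} to gain the two-colour slack at its port, exactly as in the proof of Lemma \ref{lem_type-a}. The three configurations differ only in the P/N signs of their triangles and in how the triangles are attached to the rest of the figure, so they share this skeleton and diverge only in the bookkeeping of how many colours each apex loses.

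The delicate point, I expect, is the positive-triangle case, in which Lemma \ref{lem_recolor}(1) grants full freedom for the apex only under the side condition $\phi(u')\neq\phi(v')$. If the two outer neighbours are forced to share a colour, the apex forfeits a colour, and should this happen at several triangles simultaneously the naive greedy count may run short. I would resolve this by colouring last the vertex with the most remaining freedom (a port, or an apex feeding an $I_k$-tail), so that it absorbs the accumulated restriction, and, when an equal-colour clash is otherwise unavoidable, by recolouring one of the degree-$3$ neighbours $u'$ or $v'$ (each of which has a spare colour) to break the tie before colouring $w$. Verifying that such local recolourings never propagate a new conflict back into $G'$ is the crux of the argument; it is precisely here that the hypothesis of no $4$-, $6$- or $8$-cycle is used, guaranteeing that the vertices named in the configuration are genuinely distinct and its triangles genuinely non-adjacent, so that the adjustments stay local.
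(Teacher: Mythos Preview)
Your delete-and-extend strategy has a genuine gap at precisely the point you flag as ``delicate.'' Take Configuration~b-1: two triangles $[uvw]$ and $[wxy]$ sharing the $4$-vertex $w$, with $u,v,x,y$ internal $3$-vertices and outer neighbours $u',v',x',y'$. Straighten all edges and suppose both triangles are positive. If the extension $\phi$ of $\phi_0$ to $G'=G-\{u,v,w,x,y\}$ happens to satisfy $\phi(u')=\phi(v')=1$ and $\phi(x')=\phi(y')=2$, then in the left (positive) triangle the three colours $\phi(u),\phi(v),\phi(w)$ must be a permutation of $\{1,2,3\}$ with $\phi(u)\neq 1$ and $\phi(v)\neq 1$, forcing $\phi(w)=1$; the right triangle symmetrically forces $\phi(w)=2$. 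No extension exists, and nothing in your argument prevents this colouring of $G'$ from arising.

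Your proposed repair, ``recolouring one of the degree-$3$ neighbours $u'$ or $v'$,'' does not work: $u'$ and $v'$ are \emph{not} degree-$3$ vertices of the configuration --- they are the arbitrary outer neighbours, sitting in $G'$, possibly of high degree and with all their other neighbours already coloured. You have no spare colour to move them to. The paper avoids this obstruction by a different mechanism: rather than merely deleting $V(H)$, it deletes $V(H)$ and \emph{identifies} $u'$ with $x'$, so that any colouring of the smaller graph automatically gives $\phi(u')=\phi(x')$. One then sets $\phi(w)=\phi(u')=\phi(x')$, which collapses the two constraints on $u$ (from $u'$ and from $w$) into one, and likewise for $x$; the remaining vertices are then colourable greedily. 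The price is that one must check the identification creates no new $4$-, $6$- or $8$-cycle (this is where Lemma~\ref{lem_separating-cycle} on separating $12^-$-cycles is used) and does not merge two external vertices; these verifications are the real content of the proof and are absent from your outline.
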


\begin{proof}
	 (1) Suppose to the contrary that $G$ has Configuration b-1, say $H$.
	By Remark \ref{remark1}, we may assume  that  edges of the path $u'uwxx'$ are all straight.
	Remove  $V(H)$ from $(G,\sigma)$ and identify $u'$ with $x'$. Denote by $(G',\sigma')$ the resulting $S_3$-signed graph.
	
 We shall show that $(G',\sigma')\in \mathcal{G}$ and $\phi_0$ is still proper in $G'$.
	
	We claim that the operation creates no $8^-$-cycles and consequently, $(G',\sigma')\in \mathcal{G}$. Otherwise,  this new cycle corresponds to an $8^-$-path in $G$ connecting $u'$ and $x'$, which together with $u'uwxx'$ forms a $12^-$-cycle, say $C$. As one of $v$ and $y$ lies inside $C$ and the other lies outside, $C$ is a separating $12^-$-cycle of $G$, contradicting Lemma \ref{lem_separating-cycle}. 

	Also the operation does not identify an external vertex with another vertex which is either external or adjacent to an external vertex.
	Otherwise, the operation creates a cycle $C$ formed by a path of $D$ and possibly one more edge with $|C|\leq \frac{|D|}{2}+1$. Since $|D|\leq 12$, we have $|C|\leq 7$, contradicting the conclusion above that the operation creates no $8^-$-cycles.   
	
	Therefore, $\phi_0$ is still proper in $G'$, and by the minimality of $(G,\sigma)$, $\phi_0$ can be extended to $(G',\sigma')$ and further to $(G,\sigma)$ as follows: color $w$ the same as $u'$ and consequently, $v$ and $u$ (as well as $y$ and $x$) can be properly colored in turn.
	
	(2) Suppose to the contrary that $G$ has Configuration b-2, say $H$. Note that $H$ is an $I_k$-extension  of Configuration b-1 at $y$. So, we can apply a  similar proof as for (1), deriving a contradiction.
    Note that the graph operation here removes $V(I_k)$ instead of $y$.
    For the coloring $\phi$ extended from $(G',\sigma')$ to $(G,\sigma)$:  
	by Lemma \ref{lem_I-segment}, there are two colors $c \in [3]$ such that $\phi$ can be extended to $I_k$  so that $\phi(y)=c$. 
        This has the same effect as $y$ is a 3-vertex and has a pre-colored neighbor.
	Hence, following the proof for (1), color $w$ the same as $u'$ and consequently, $v$ and $u$ (as well as $y$ and $x$) can be properly colored in turn.
	
	(3) Suppose to the contrary that $G$ has Configuration b-3, say $H$. Note that $H$ is an $I_k$-extension of Configuration b-1 at both $y$ and $v$. We can also apply similar argument as for (1), deriving a contradiction. Note that $v$ is also the port of an $I_k$, and so it will be treated the same as $y$ for both the graph operation and the coloring extension.
 \end{proof}

  \begin{figure}[ht]
	\centering
	\includegraphics[width=10cm]{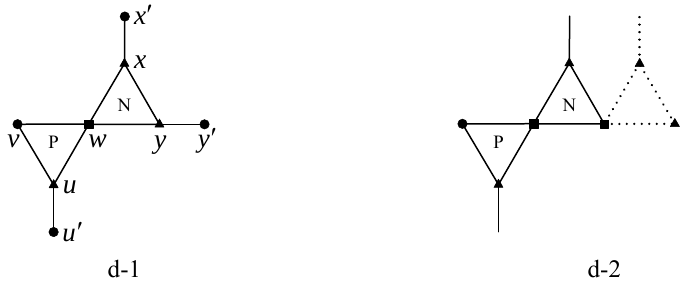}
	\caption{Configurations d-1 and d-2}\label{fig-proof-d}
\end{figure}

\begin{lemma}\label{lem_type-d}
	$G$ has none of Configurations d-1 and d-2, see Figure \ref{fig-proof-d}.
\end{lemma}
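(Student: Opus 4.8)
The plan is to mimic the reduction template already used for Configurations b-1--b-3, adapting the graph operation to the structure of d-1 and d-2. First I would assume, toward a contradiction, that the minimal counterexample $(G,\sigma)$ contains Configuration d-1, say $H$, and fix notation for the $3$-faces and the $3$-vertices appearing in $H$, together with the neighbours of $H$ lying outside. Since the relevant ``spine'' edges of $H$ form an acyclic subgraph, Remark~\ref{remark1} lets me assume they are all straight. I would then delete $V(H)$ (or, more economically, the interior $3$-vertices of its $3$-faces) and identify the two ``end'' outer neighbours into a single vertex, obtaining a smaller $S_3$-signed graph $(G',\sigma')$, exactly as in the proof of Lemma~\ref{lem_type-b}(1).

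The two verifications that make this work are the same two used for b-1. I would first argue that the identification creates no $8^-$-cycle: any such new cycle would correspond to a short path in $G$ joining the two identified vertices, which together with the straight spine of $H$ closes up to a $12^-$-cycle separating the two apex vertices of the two $3$-faces, contradicting Lemma~\ref{lem_separating-cycle}. Hence $(G',\sigma')\in\mathcal{G}$. I would then check that no external vertex is identified with an external vertex or with a neighbour of one; otherwise a path of $D$ plus at most one edge would form a $7^-$-cycle, again impossible by the previous step. Together these guarantee that $(G',\sigma')\in\mathcal{G}$ and that $\phi_0$ remains a proper precolouring on $G'$.

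By the minimality of $(G,\sigma)$, $\phi_0$ extends to a $3$-colouring $\phi$ of $(G',\sigma')$, which pulls back to a partial colouring of $G$ in which the identified pair receives a common colour. It remains to colour the deleted vertices. Because each removed $3$-face carries two $3$-vertices and its spine edges are straight, Lemma~\ref{lem_recolor} applies: I would colour the apex of each $3$-face first and then propagate to its two $3$-vertices, processing the faces in the order dictated by the spine so that at each step at most one neighbour of the current apex is already coloured. This produces a $3$-colouring of $(G,\sigma)$ extending $\phi_0$, the desired contradiction. For Configuration d-2, which I expect to be an $I_k$-extension (or a $J_k$-extension) of d-1 at one of its vertices, I would first invoke Lemma~\ref{lem_I-segment} to replace each attached $I_k$ by a single $3$-vertex with one pre-coloured neighbour, thereby reducing d-2 to the situation of d-1, and then repeat the argument verbatim.

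The step I expect to be the main obstacle is the cycle-length accounting in the identification: I must track precisely which vertices of $H$ are interior to the two $3$-faces versus on the spine, so that the closed walk formed by a hypothetical short new cycle together with the straight spine has length at most $12$ and genuinely separates the plane, placing one apex inside and the other outside. A secondary subtlety arises if a $J_k$-extension is involved in d-2: since $J_k$ has \emph{two} ports rather than one, Lemma~\ref{lem_I-segment} does not apply directly, and I would first need to establish the analogous two-sided colouring-freedom statement for $J_k$ before the reduction to d-1 can be carried out.
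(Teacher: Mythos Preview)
Your template (delete, identify the two outer neighbours, check no short cycles, extend the colouring) is exactly what the paper does, and you are right that d-2 is an $I_k$-extension of d-1 at $y$, so Lemma~\ref{lem_I-segment} reduces it to d-1 just as you say (no $J_k$ is involved).

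The genuine gap is that you have modelled d-1 on b-1, with two ``apex'' $3$-vertices, one per triangle, that end up on opposite sides of the hypothetical $12^-$-cycle. That is not the structure of d-1. In d-1 the triangle $[uvw]$ is positive with $u$ an internal $3$-vertex and $w$ a $4_{\bowtie}$-vertex, but $v$ is an \emph{arbitrary} vertex which is \emph{not} removed; only $u,w,x,y$ are deleted before identifying $u'$ with $x'$. Consequently $v$ may very well lie on the $8^-$-path $P$ from $u'$ to $x'$ in $G-\{u,w,x,y\}$, and then $v\in V(C)$ for the resulting $12^-$-cycle $C$, so your ``$C$ separates the two apex vertices'' argument breaks down. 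The paper handles this extra case directly: if $v\in V(C)$, then the chords $uv$ and $vw$ cut $C$ into the triangle $[uvw]$ and two further cycles, each of which is forced to have length at least $9$, contradicting $|C|\le 12$. You flagged the cycle accounting as the likely obstacle, and this is precisely where it bites.

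The same structural asymmetry affects your colouring step. Only $[wxy]$ has two removed $3$-vertices; $[uvw]$ does not, so you cannot ``colour the apex of each $3$-face first and then propagate to its two $3$-vertices'' on both triangles. Since $v$ is already coloured in $G'$, the paper instead splits on whether $\phi(u')=\phi(v)$: if they differ, set $\phi(w)=\phi(u')$ and the rest follows greedily; if they agree, apply Lemma~\ref{lem_recolor} to $[wxy]$ (which \emph{does} have two $3$-vertices) to choose a colour for $w$ extendable to $x,y$, and then $u$ is colourable because its two coloured neighbours $u'$ and $v$ forbid the same colour. Your sketch does not capture this case split.
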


\begin{proof}
	(1) Suppose to the contrary that $G$ has Configuration d-1, say $H$.
	By Remark \ref{remark1}, we may assume edges of $x'xwuu'$ and $uvw$ are all straight. 	
	Remove $u,w,x,y$ from $(G,\sigma)$ and identify $u'$ with $x'$, obtaining a new $S_3$-signed graph $(G',\sigma')$.

We claim that the operation creates no $8^-$-cycles and consequently, $(G',\sigma')\in \mathcal{G}$. Otherwise,  this new cycle corresponds to an $8^-$-path in $G$, which together with $u'uwxx'$ forms a $12^-$-cycle, say $C$. Then  either   one of $v$ and $y$ lies inside $C$ and the other lies outside or $v\in V(C)$. In the former case, $C$ is a separating $12^-$-cycle of $G$, contradicting Lemma \ref{lem_separating-cycle}. In the latter case, $uv$ and $vw$ divide $C$ into a 3-cycle and two $9^+$-cycles, contradicting the fact that $|C|\leq 12$.
 
	Similarly as the proof of Lemma \ref{lem_type-b}, we can show that $\phi_0$ is still proper in $G'$.  
	Hence, by the minimality of $(G,\sigma)$, $\phi_0$ can be extended to $(G',\sigma')$ and further to $(G,\sigma)$ as follows: let $\alpha$ and $\beta$ be the colors of $u'$ and 
$v$, respectively. 
     If $\alpha\neq \beta$, then color $w$ with $\alpha$, which obviously extends to $u, y ,x$, since $x', w, u'$ are of the same color.
     Assume  $\alpha=\beta$.  
     Note that $w$ has a pre-colored neighbor $v$. By Lemma \ref{lem_recolor}, there exists an available color for $w$ such that the resulting coloring is extendable to $x$ and $y$. As $v$ and $u'$ are of the same color, $u$ can be properly colored.
	
	(2) Note that Configuration d-2 is an $I_k$-extension of Configuration d-1 at $y$. By similar argument as for (1), we can show that $G$ has no Configuration d-2.
\end{proof}

  \begin{figure}[ht]
	\centering
	\includegraphics[width=16cm]{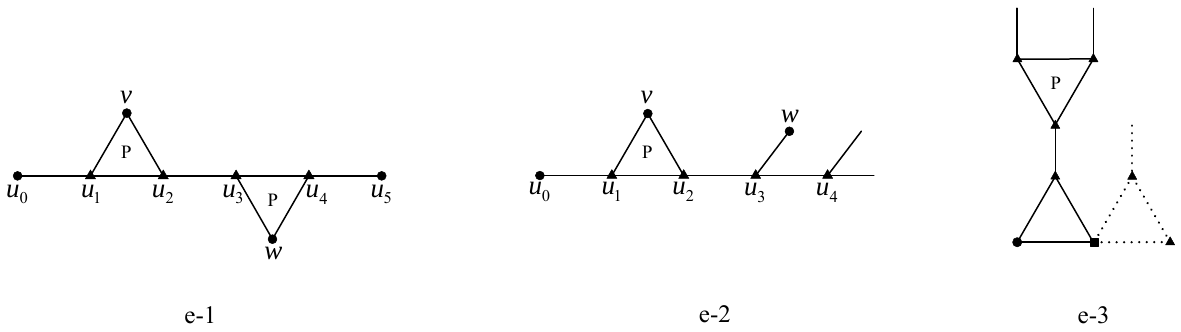}
	\caption{Configurations e-1, e-2, e-3}\label{fig-proof-e}
\end{figure}

\begin{lemma} \label{lem_type-e}
	$G$ contains none  of Configurations  e-1, e-2, e-3, see Figure \ref{fig-proof-e}. In particular, the outer neighbor of a $3_{\Delta^+}$-vertex is not a $3_{\Delta^+}$-vertex, and the outer neighbor of a bad vertex is neither a $3_{\Delta^+}$-vertex nor a $3_{\Delta^-}$-vertex.
\end{lemma}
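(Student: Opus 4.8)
The plan is to show that each of e-1, e-2, e-3 is reducible; the three ``in particular'' claims then follow at once, since an outer neighbor of a $3_{\Delta^+}$-vertex that is itself $3_{\Delta^+}$ realizes e-1, an outer neighbor of a bad vertex that is $3_{\Delta^+}$ realizes e-2, and one that is $3_{\Delta^-}$ realizes e-3. In each case the configuration is two triangles sharing a single bridge edge $uu'$: a positive triangle $T_1$ through the $3_{\Delta^+}$- or bad vertex $u$ (two internal $3$-vertices in e-1, a full $(3,3,3)$-face in e-2 and e-3) and a second triangle $T_2$ through $u'$, positive in e-1, e-2 and negative in e-3, with two internal $3$-vertices. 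By Remark~\ref{remark1} and the definition of a positive cycle I would first switch so that both positive triangles, the bridge, and the pendant edges from the remaining $3$-vertices to their outer neighbors are all straight; in e-3 the two straight edges of the negative $T_2$ are arranged this way, leaving its single non-straight edge.

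For the reduction I would delete the interior $3$-vertices of the two triangles, retain in $G'$ the apex vertices (whose degrees are unrestricted), and, following Lemma~\ref{lem_type-b}, identify one apex with the outer neighbor of the far $3$-vertex of the other triangle. For e-1 this is the apex $w$ of $T_1$ with the outer neighbor $q$ of the far $3$-vertex of $T_2$ (write $p$ for the outer neighbor of the far $3$-vertex of $T_1$); for e-2 it is the apex $w$ of $T_2$ with an outer neighbor $a_1$ of the $(3,3,3)$-face $T_1$ (the remaining outer neighbors being $a_2$ on $T_1$ and $b'$ on $T_2$). The proof that $(G',\sigma')\in\mathcal G$ and that $\phi_0$ stays proper is the type-b argument verbatim: any short cycle created by the identification would pull back to a $12^-$-path in $G$ which, closed through the straight path running across the configuration, produces a separating $12^-$-cycle, contradicting Lemma~\ref{lem_separating-cycle}; and the identified vertices cannot both be external, else $D$ would carry a far too short cycle. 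By minimality of $(G,\sigma)$ I then extend $\phi_0$ to $(G',\sigma')$, coloring in particular every apex and outer neighbor and equating the colors of the identified pair.

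It remains to color the deleted $3$-vertices, and here the sign of $T_2$ is decisive. Since the apexes and all outer neighbors are now fixed, each straight positive triangle is rigid---it must use all three colors---so its bridge endpoint is driven to a single forced color, and the whole extension collapses to coloring the short path consisting of the bridge $uu'$ together with its two incident $3$-vertices. The main obstacle, and the entire reason the identification is needed, is that this path-coloring fails in exactly one scenario: when the two forced colors at the ends of the bridge coincide, equivalently when the relevant outer neighbors are monochromatic ($\phi(p)=\phi(q)$ in e-1, and $\phi(a_1)=\phi(a_2)=\phi(b')$ in e-2). The identification defeats precisely this scenario by rendering its precondition self-contradictory: in e-1, $\phi(w)=\phi(q)$ cannot coexist with both $\phi(p)=\phi(q)$ and the inequality $\phi(w)\neq\phi(p)$ that forcing at the $v$-end demands, so that end keeps a free color; in e-2, $\phi(w)=\phi(a_1)$ is incompatible with the monochromatic failure, which additionally requires those outer colors to differ from $\phi(w)$. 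In e-3 no identification is needed: $T_2$ is not rigid, so Lemma~\ref{lem_recolor}(2) supplies a second admissible color at $u'$, which breaks the coincidence directly. Once this single scenario is excluded, a greedy pass along the bridge path completes the extension of $\phi_0$ to all of $(G,\sigma)$, contradicting the choice of $(G,\sigma)$ and proving that e-1, e-2, e-3 are reducible.
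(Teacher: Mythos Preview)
Your e-3 argument has a genuine gap. Lemma~\ref{lem_recolor}(2) gives two admissible colors for the \emph{apex} of a negative triangle once the outer neighbors of its two $3$-vertices are colored; it does not give two admissible colors for a $3$-vertex once the apex is colored. In your setup the apex $b$ of $T_2$ already lies in $G'$ and carries a fixed color, so the lemma cannot be invoked to free up $u'$. In fact $u'$ can be pinned: straighten every edge except $u'a$, take $\sigma_{(u',a)}=(12)$, and let the boundary colors be $\phi(x')=\phi(y')=\phi(a')=1$, $\phi(b)=2$. The positive $(3,3,3)$-face $T_1$ then forces $\phi(u)=1$; the straight edges $uu'$ and $u'b$ force $\phi(u')=3$; and now $a$ must avoid $1$, $2$, and $\sigma_{(u',a)}(3)=3$, which is impossible. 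So ``no identification is needed'' is false; some graph operation is still required to exclude this boundary coloring.

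The paper's reductions are also structurally different from yours. For e-1 it does not identify vertices at all but deletes $u_1,u_2,u_3,u_4$ and \emph{inserts a straight edge} $u_0u_5$ between the two far outer neighbors; the separating-cycle check uses the two apexes $v,w$ as the witnesses. For e-2 it identifies $u_0$ with $w$ (the apex of the second triangle) along the length-four path $u_0u_1u_2u_3w$; this forces $\phi(u_3)\neq\phi(u_0)$, exactly the hypothesis of Lemma~\ref{lem_recolor}(1) needed to finish the positive triangle $[u_1u_2v]$. Configuration e-3 is treated as an $I_k$-extension of e-2 and reuses that same identification. Incidentally, your ``type-b argument verbatim'' for the separating-cycle step in e-1 is not safe either: in b-1 the two triangles share a degree-$4$ vertex $w$, which forces the two witness vertices $v,y$ to opposite sides of the identifying path; your e-1 triangles meet only along a bridge edge, so the candidate witnesses ($u_1$ and the second apex) can lie on the same side of the path when both triangles are embedded on one side of the bridge.
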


\begin{proof}
 
	(1) Suppose to the contrary that $G$ has Configuration e-1. 
 By Remark \ref{remark1}, we may assume edges incident with $u_1$, $u_2$, $u_3$, or $u_4$ are all straight.
 Remove $u_1,u_2, u_3,u_4$ from $(G,\sigma)$ and add a straight edge between $u_0$ and $u_5$, obtaining a new $S_3$-signed graph $(G',\sigma')$.
	
	We shall show that the operation creates no $8^-$-cycles and consequently, $(G',\sigma')\in \mathcal{G}$.
	If not, a $12^-$-cycle $C$ of $G$ can be obtained from this $8^-$-cycle by substituting $u_0u_1\cdots u_5$ for $u_0u_5$. If $v\in V(C)$, then $vu_1$ and $vu_2$ divide $C$ into a 3-cycle and two $9^+$-cycles, contradicting the fact that $|C|\leq 12$.
	Thus $v\notin V(C)$ and similarly, $w\notin V(C)$. Now $C$ is a separating $12^-$-cycle, contradicting Lemma \ref{lem_separating-cycle}.  
	
	We shall show that the operation does not add an edge between two external vertices and consequently, $\phi_0$ is still proper in $G'$.
	If not,	the operation creates a cycle $C$ formed by a path of $D$ and $u_0u_5$ with $|C|\leq \frac{|D|}{2}+1\leq 7$, a contradiction.

	By the minimality of $(G,\sigma)$, $\phi_0$ can be extended to $(G',\sigma')$ and further to $(G,\sigma)$ as follows: Let $\phi$ be the resulting coloring of $(G',\sigma')$. If $\phi(u_0)=\phi(v)$ or $\phi(u_5)= \phi(w)$, then obviously $\phi$ can be extended to $(G,\sigma)$ in the order $u_4\rightarrow u_3\rightarrow u_2\rightarrow u_1$ or $u_1\rightarrow u_2\rightarrow u_3\rightarrow u_4$, respectively; otherwise, color $u_2$ same as $u_0$, and $u_3$ same as $u_5$ and consequently, we can properly color $u_1$ and $u_4$.

	(2) Suppose to the contrary that $G$ has Configuration e-2. 
        By Remark \ref{remark1}, we may assume edges incident with $u_1$, $u_2$, or $u_3$ are all straight.
	Remove $u_1,u_2,u_3,u_4$ from $(G,\sigma)$ and identify $u_0$ with $w$. We thereby obtain a new $S_3$-signed graph $(G',\sigma')$.
	Similarly as the proof of Lemma \ref{lem_type-d}, we can show that $(G',\sigma')\in \mathcal{G}$ and $\phi_0$ is still proper in $G'$.
	Therefore, by the minimality of $(G,\sigma)$, $\phi_0$ can be extended to $(G',\sigma')$ and further to $(G,\sigma)$ as follows: properly color $u_4$ and $u_3$ in turn. Since $u_0$ and $u_3$ receive different colors, the resulting coloring can be extended to $u_1$ and $u_2$ by Lemma \ref{lem_recolor}. 

    (3) Note that Configuration e-3 is an $I_k$-extension of Configuration e-2 at $u_4$ for which $u_4w\in E(G)$ and $v$ is an internal 3-vertex in $G$. 
    By similar argument as for (2), we can show that $G$ has no Configuration e-3.
\end{proof}

\begin{lemma}\label{lem_J-2-segment}
	Let $H=J_2\subseteq G$. Then the unique 4-vertex of $H$ which is not a port is  incident with two 9-faces.
\end{lemma}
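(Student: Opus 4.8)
The plan is to pin down the four faces around the relevant vertex and then squeeze the size of each non-triangular one between $9$ and $9$.

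First I would identify the vertex. In $J_2$ the internal $4$-vertices are $u_1,\dots,u_{k-1}$, so for $k=2$ the unique non-port $4$-vertex is $u_1$, with neighbours $u_0,w_1,w_2,u_2$ and lying on the two non-adjacent triangles $T_1=[u_0u_1w_1]$ and $T_2=[u_1u_2w_2]$. Thus $u_1$ is incident with exactly four faces: $T_1$, $T_2$, and two further faces $f_1,f_2$, whose boundaries are cycles by Lemma~\ref{lem_2connected}; the goal is $|f_1|=|f_2|=9$. Before estimating, I would record the rigid data the configuration supplies: $w_1,w_2,z_1,z_2$ are internal $3$-vertices with completely prescribed neighbourhoods ($w_i$ adjacent only to $u_{i-1},u_i,z_i$ and $z_i$ only to $w_i,x_i,y_i$), and since each $T'_i=[x_iy_iz_i]$ is a positive $(3,3,3)$-face, all of $x_i,y_i,z_i$ are bad vertices and in fact $3_{\Delta^+}$-vertices. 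I would also note that $u_0u_2\notin E(G)$, since an edge $u_0u_2$ would create the $4$-cycle $u_0w_1u_1u_2$, impossible in $\mathcal{G}$; this guarantees that $u_1$ really has two non-triangular faces rather than one.

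For the lower bound I would trace the boundary of a non-triangular face $f$ outward from $u_1$. Crossing $u_1w_i$ away from $T_i$ forces the next boundary vertex to be $z_i$ (because $w_i$ is a $3$-vertex and $T_i$ lies on the other side of $u_1w_i$), and the vertex after that is $x_i$ or $y_i$. Since $\mathcal{G}$ forbids $4$-, $6$- and $8$-cycles, $|f|\notin\{4,6,8\}$; $|f|=3$ is impossible because no two neighbours of $u_1$ lying on $f$ are adjacent (the $w_i$ have full neighbourhoods and $u_0u_2\notin E(G)$); $|f|=5$ would require the edge $z_1z_2$, impossible as the neighbourhoods of the $z_i$ are already full; and $|f|=7$ would require an edge joining $\{x_1,y_1\}$ to $\{x_2,y_2\}$, i.e.\ the outer neighbour of a bad vertex would be a $3_{\Delta^+}$-vertex, which Lemma~\ref{lem_type-e} forbids. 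Hence $|f|\ge 9$.

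The crux is the matching upper bound $|f|\le 9$, which I expect to be the genuine obstacle. Here I would argue from the minimality of $(G,\sigma)$: a $10^+$-face $f$ would carry a long boundary of prescribed low-degree vertices — the $3$-vertices $w_i,z_i$, the pendant positive triangles $T'_i$, and a port — which is exactly the chain-of-triangles-along-a-path pattern reduced in Lemmas~\ref{lem_type-b}, \ref{lem_type-d} and \ref{lem_type-e}; the corresponding deletion/identification would produce a proper smaller host graph in $\mathcal{G}$ into which $\phi_0$ extends, contradicting minimality, or else the over-long boundary would close together with $T_1,T_2$ into a separating $12^-$-cycle excluded by Lemma~\ref{lem_separating-cycle}. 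Because these reductions treat $u_0$ and $u_2$ as ports (of the $I$- or $J$-extensions), no degree hypothesis on them is needed, which is precisely what lets the argument also control the faces that run toward a port. Combining the two bounds gives $|f|=9$; carrying this out for both $f_1$ and $f_2$, and checking the two possible cyclic orders of $u_0,w_1,w_2,u_2$ about $u_1$ (according to whether $T_1,T_2$ lie on the same or on opposite sides of the path $u_0u_1u_2$), proves the lemma. I anticipate the lower bound to be forced cheaply by the prescribed $3$-vertex neighbourhoods and Lemma~\ref{lem_type-e}, with essentially all the difficulty concentrated in the reducibility step that rules out large faces.
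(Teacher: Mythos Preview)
Your lower-bound tracing has a gap in the ``same side'' case. There the cyclic order at the central vertex is $u_0,w_1,w_2,u_2$, so one of the two non-triangular faces runs along $u_0u_1u_2$ and contains \emph{neither} edge $u_1w_i$; your walk out to $z_i$ never touches it, and your case $|f|=7$ (``an edge joining $\{x_1,y_1\}$ to $\{x_2,y_2\}$'') simply does not arise for that face. That face is still a $9^+$-face, but by a different mechanism: gluing its boundary to the adjacent triangle $T_1$ along their common edge produces a cycle of length $|f|+1$, which rules out $|f|\in\{3,5,7\}$.

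The substantive gap is the upper bound. Pointing to Lemmas~\ref{lem_type-b}, \ref{lem_type-d}, \ref{lem_type-e} is not a reduction: none of those configurations is $J_2$, and a $10^+$-face abutting $J_2$ does not manufacture any of them. The paper carries out a \emph{new} reducibility argument tailored to $J_2$: delete $V(H)\setminus\{u_0,u_2\}$ and \emph{simultaneously} identify the port $u_0$ with the outer neighbour $y_1'$ of $y_1$ and the port $u_2$ with the outer neighbour $x_2'$ of $x_2$, along the straightened $4$-paths $u_0w_1z_1y_1y_1'$ and $u_2w_2z_2x_2x_2'$; the coloring then extends back via Lemma~\ref{lem_recolor}. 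The point your sketch misses is that the two faces cannot be handled one at a time. An $8^-$-cycle created by this double identification may split as a path $P_1$ between $u_0$ and $u_2$ together with a path $P_2$ between $y_1'$ and $x_2'$, and the only way to exclude that is the identity $d(f)+d(g)=|P_1|+|P_2|+10$, which requires the \emph{sum} $d(f)+d(g)\ge 19$, not an individual bound $|f|\ge 10$. So the paper's logic is not ``assume one face is large and reduce'' but ``assume not both equal $9$, hence $d(f)+d(g)\ge 19$, hence the double identification stays in~$\mathcal{G}$, contradiction''; your separate upper/lower scheme does not capture this coupling.
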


\begin{proof}
	 Let $f$ and $g$ be the two faces (other than triangles) incident with $u_2$. Assume to the contrary that at least one of $f,g$ is not a $9$-face. Then $d(f)+d(g)\geq 19$.
     We distinguish two cases, see Figure \ref{fig-proof-J2}.

 \begin{figure}[ht]
	\centering
	\includegraphics[width=13cm]{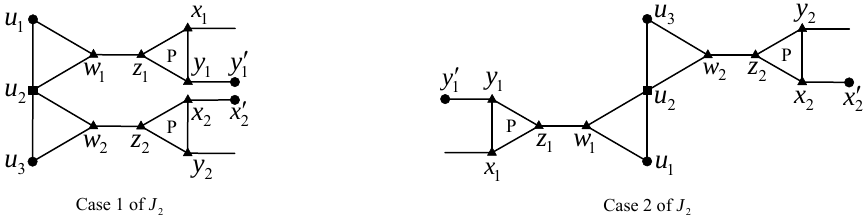}
	\caption{Configuration $J_2$ in two cases for the proof of Lemma \ref{lem_J-2-segment}}\label{fig-proof-J2}
\end{figure}

	{\bf Case 1:} $[u_1w_1u_2]$ and $[u_2w_2u_3]$ locate on the same side of $u_1u_2u_3$.

By Remark \ref{remark1}, we may assume  that  edges of paths $u_1w_1z_1y_1y_1'$ and $u_3w_2z_2x_2x_2'$ are all straight.
	Remove all the vertices of $H$ except $u_1$ and $u_3$, and identify $u_1$ with $y_1'$, and $u_3$ with $x_2'$. We thereby obtain from $(G,\sigma)$ a new $S_3$-signed graph, say $(G',\sigma')$.
	
	We claim that the operation creates no $8^-$-cycles and consequently, $(G',\sigma')\in \mathcal{G}$. Otherwise,  this new cycle contains  either one path (say $P$) between $u_1$ and $y'_1$ or between $u_3$ and $x'_2$, or two paths such that one (say $P_1$) between $u_1$ and $u_3$ and the other (say $P_2$) between $y_1'$ and $x_2'$ in $G$. 
	In the former case, $P$ together with $u_1w_1z_1y_1y_1'$ or $u_3w_2z_2x_2x_2'$ forms a separating $12^-$-cycle of $G$, contradicting Lemma \ref{lem_separating-cycle}. 
	In the latter case, it follows that $d(f)+d(g)=|P_1|+|P_2|+10 \leq 18$, contradicting the assumption that $d(f)+d(g)\geq 19$.
	
	Similarly as the proof of Lemma \ref{lem_type-b}, we can show that $\phi_0$ is still proper in $G'$.
	
	Therefore, by the minimality of $(G,\sigma)$, $\phi_0$ can be extended to $(G',\sigma')$ and further to $(G,\sigma)$ as follows: properly color $u_2$ and then $w_1$ and $w_2$. Properly color $x_1$ and $y_2$. By Lemma \ref{lem_recolor}, the resulting coloring is extendable to $y_1$ and $z_1$, and to $x_2$ and $z_2$ as well.
	
	{\bf Case 2:}   $[u_1w_1u_2]$ and $[u_2w_2u_3]$ locate on distinct sides of $u_1u_2u_3$. 
 
 Note that $x_2,y_2,z_2$ locate in anti-clockwise order around the incident 3-face while $x_1,y_1,z_1$ do in clockwise order.
	By the same argument as for Case 1, a contradiction can be derived.
\end{proof}

\begin{lemma}\label{lem_J-segment}
	Let $H=J_k\subseteq G$ with $k\geq 4$. Then among all the 9-faces containing a 4-vertex of $H$ which is not a port,  there exist at least $k-2$ nice 9-faces.
\end{lemma}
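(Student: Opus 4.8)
The plan is to locate, among the two $9$-faces incident with each non-port $4$-vertex, enough faces matching the patterns of Definition \ref{def-9face}. First I would record the data of $J_k$: the non-port $4$-vertices are exactly the $k-1$ internal vertices $u_1,\dots,u_{k-1}$, each a $4_{\bowtie}$-vertex lying on the two non-adjacent triangles $T_i=[u_{i-1}u_iw_i]$ and $T_{i+1}=[u_iu_{i+1}w_{i+1}]$. The quadruple $\{T_i,T_{i+1},T'_i,T'_{i+1}\}$ is a copy of $J_2$ with centre $u_i$, so Lemma \ref{lem_J-2-segment} yields that $u_i$ is incident with exactly two $9$-faces. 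I would also note the vertex types the patterns require: each apex $w_i$ is an internal $3$-vertex on $T_i$, hence a $3_{\Delta}$-vertex, while, since each $T'_i=[x_iy_iz_i]$ is a positive $(3,3,3)$-face, every one of $x_i,y_i,z_i$ is simultaneously a bad vertex and a $3_{\Delta^+}$-vertex.

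Next I would read off the type of the two $9$-faces at $u_i$ from the embedding. Let $s_i\in\{+,-\}$ record the side of the path $u_0\dots u_k$ on which $w_i$ lies; then one face of $u_i$ lies on the side of $w_i,w_{i+1}$ and the other on the opposite side, and the way they meet neighbouring centres is governed by $s_{i-1},\dots,s_{i+2}$. The decisive step is to show that an \emph{internal} centre $u_i$ ($2\le i\le k-2$, so that $u_{i-1}$ and $u_{i+1}$ are again centres) must have $s_i\neq s_{i+1}$. Suppose not, say $s_i=s_{i+1}=+$; by Lemma \ref{lem_J-2-segment} the face $g$ of $u_i$ lying on the $-$ side is a $9$-face, and it contains both path-edges $u_{i-1}u_i,u_iu_{i+1}$. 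If $s_{i-1}=+$ then $g$ also runs along $u_{i-2}u_{i-1}$, and after eventually turning into down-apices with their pendant triangles one gets $|g|\ge 10$, which is impossible; hence $s_{i-1}=-$, and symmetrically $s_{i+2}=-$. But then $g$ turns at $u_{i-1},u_{i+1}$ into the down-apices $w_{i-1},w_{i+2}$, traverses $T'_{i-1}$ and $T'_{i+2}$, and, being a $9$-face, must close with a single edge joining a vertex of $T'_{i-1}$ to a vertex of $T'_{i+2}$. That edge joins two $3_{\Delta^+}$-vertices, contradicting the ``in particular'' clause of Lemma \ref{lem_type-e}. Thus the apex-signs alternate along the path, with a repetition permitted only at the two ends ($s_1=s_2$ or $s_{k-1}=s_k$, where a port absorbs the closing and no forbidden edge is forced).

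With the signs almost alternating I would match faces to patterns and count. For each consecutive pair $u_i,u_{i+1}$ with $s_i\neq s_{i+1}\neq s_{i+2}$, the common face on the side of $w_i$ is bounded by $y_iz_iw_i\,u_iu_{i+1}\,w_{i+2}z_{i+2}x_{i+2}$ together with one further vertex, which is precisely a type-(3) face; for each end centre whose two apices agree, the face between its two flags is a type-(2) face $y_iz_iw_iu_iw_{i+1}z_{i+1}x_{i+1}$ closed by two further vertices. A short count over the $k-1$ centres, distinguishing whether zero, one, or two of the ends repeat a sign, then gives in every case at least $k-2$ of these faces, and indexing them by the internal path-edges $u_iu_{i+1}$ shows they are pairwise distinct.

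The part I expect to be genuinely delicate is the nice-vertex requirement, i.e. that each counted face actually qualifies under Definition \ref{def-9face}. For a type-(3) face this demands that the closing vertex $v_9$ — the common outer neighbour of the two $3_{\Delta^+}$-vertices $y_i$ and $x_{i+2}$ — be a $4^+$-vertex or an external $3$-vertex, and likewise that at least one of the two closing vertices of a type-(2) valley meet this condition. Here I would again invoke the reducibility lemmas: if $v_9$ were an internal $3$-vertex it would, by the absence of short cycles, lie on no triangle and would link the two positive pendant triangles $T'_i$ and $T'_{i+2}$ by a short path, a configuration excluded by Lemmas \ref{lem_type-b} and \ref{lem_type-e}. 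Confirming the exact face boundaries from the embedding and discharging this last degree condition for every counted face is, I expect, the main obstacle; by contrast the sign-alternation argument via Lemma \ref{lem_type-e} is the clean structural core of the proof.
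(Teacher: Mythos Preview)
Your sign-alternation claim is where the argument breaks. You assert that $s_i\ne s_{i+1}$ for every $2\le i\le k-2$, but the step ``if $s_{i-1}=+$ then $g$ also runs along $u_{i-2}u_{i-1}$, and after eventually turning into down-apices \dots\ one gets $|g|\ge10$'' tacitly assumes the opposite-side face must meet a down-apex on \emph{both} ends before closing. Near a port this is false: for $i=2$ with $s_1=s_2=s_3=+$, the face $g$ reaches the port $u_0$ and leaves $J_k$ there, so the left end never turns down and $|g|=9$ is perfectly consistent. In fact alternation fails globally --- nothing prevents, say, all $T_i$ from lying on the same side of the path --- so your subsequent face-count, which manufactures type-(2) faces only at the two ends and type-(3) faces elsewhere, undercounts whenever two interior apices share a side.

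The paper's proof sidesteps this by not reasoning about an alternation pattern at all. It simply defines, for every pair $i<j$ with $T_i,T_j$ on one side of $P$ and $T_{i+1},\dots,T_{j-1}$ on the other, the face $f_{ij}$ whose boundary contains $y_iz_iw_iu_{i+1}\cdots u_jw_jz_jx_j$. Lemma~\ref{lem_J-2-segment} forces $d(f_{ij})=9$, hence $j-i\le3$; a single appeal to the exclusion of Configuration~e-2 then rules out $j-i=3$ and, in the same stroke, supplies the required nice vertex when $j-i\in\{1,2\}$ --- exactly the ``delicate'' verification you flagged as the main obstacle. The count is immediate: with $a$ triangles on one side and $b=k-a$ on the other, there are $(a-1)+(b-1)=k-2$ such faces (or $k-1$ if $ab=0$). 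Your type-(2) and type-(3) patterns are precisely the cases $j-i=1$ and $j-i=2$ of this family, so the geometric content is the same; indexing directly by the faces $f_{ij}$ rather than by a sign pattern that need not exist is what makes the argument go through.
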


\begin{proof}
	Denote by $P=u_1u_2\ldots u_{k+1}$ the unique path of $H$ with $u_1$ and $u_{k+1}$ being ports of $H$ and $u_2, u_3, \ldots, u_k$ being $4_{\bowtie}$-vertices. Let $T_i=[u_iw_iu_{i+1}]$ for $i\in\{1,2,\ldots,k\}$. For any $1\leq i < j \leq k$ such that $T_i$ and $T_j$ locate on one side of $P$ and $T_{i+1},\ldots,T_{j-1}$
 locate on the other side, let the face $f_{ij}=[y_iz_iw_iu_{i+1}u_{i+2}\ldots u_jw_jz_jx_jo_1o_2\ldots]$. See Figure \ref{fig-proof-J-nice} for an example of $J_4$ and $f_{ij}$.
 
\begin{figure}[ht]
	\centering
	\includegraphics[width=4.5cm]{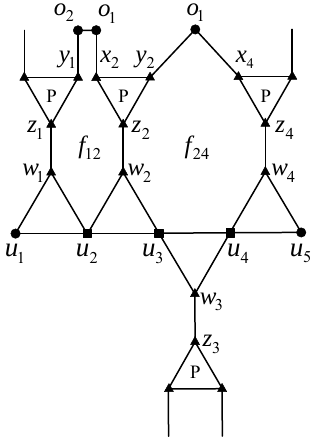}
	\caption{An example of $J_4$ for the proof of Lemma \ref{lem_J-segment}}\label{fig-proof-J-nice}
\end{figure}	

	We claim that each $f_{ij}$ is a nice 9-face and $1\leq j-i\leq 2$. By Lemma \ref{lem_J-2-segment}, $d(f_{ij})=9$. This implies that  $1\leq j-i\leq 3$.
	Since $G$ has no Configuration e-2, we can deduce that: (1) $y_ix_j\notin E(G)$, i.e., $j-i\neq 3$;
	(2) if $j-i=2$, then $o_1$ is not an internal 3-vertex; (3) if $j-i=1$, then at least one of $o_1$ and $o_2$ is not an internal 3-vertex. By definition, $f_{ij}$ is a nice 9-face.

	Let $F$ be the set of all such faces $f_{ij}$. By the claim above, $|F|\geq k-2$.
\end{proof}

\begin{figure}
	\centering
	\includegraphics[width=15cm]{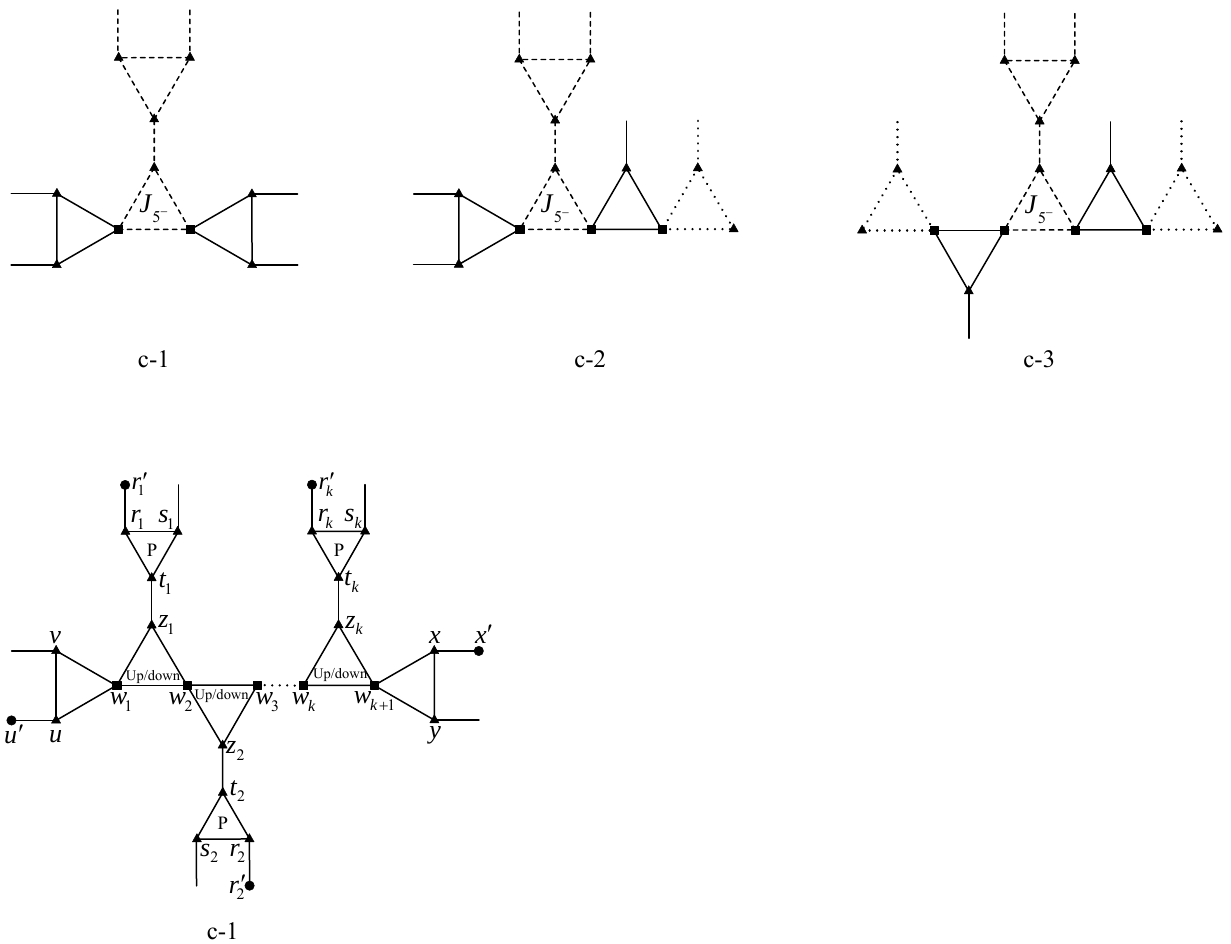}
	\caption{Configurations c-1, c-2 and c-3, where $J_{5^-}$ indicates a $J_{k}$-extension with any $k\leq 5$}\label{fig-c}
\end{figure}

\begin{lemma}\label{lem_type-c}
	$G$ has none  of Configurations c-1, c-2, c-3, see Figure \ref{fig-c}.
\end{lemma}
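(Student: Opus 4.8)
The plan is to treat c-1, c-2, c-3 uniformly as $J_k$-extensions (with $k\le 5$, as the label $J_{5^-}$ indicates) of base configurations already shown reducible in Lemmas~\ref{lem_type-b}, \ref{lem_type-d} and \ref{lem_type-e}: in each case a $4$-vertex $v$ of the base configuration, incident with two non-adjacent triangles $T_1,T_2$, is split into $v_1,v_2$ and glued to the two ports $u_0,u_k$ of a copy of $J_k$. I would mimic the template of those earlier lemmas. Assuming $G$ contains the configuration, I first use Remark~\ref{remark1} to straighten the relevant acyclic sets of edges (the path through $J_k$ together with the arms of the base configuration). I then perform the same reduction as in the corresponding base case --- deleting the configuration vertices, all of $V(J_k)$ included, and either identifying the two outer vertices of the arms or inserting one straight edge between them --- to obtain an $S_3$-signed graph $(G',\sigma')$.

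The verification that $(G',\sigma')\in\mathcal{G}$ and that $\phi_0$ stays proper is, in spirit, identical to the base cases, but the bookkeeping is heavier because the gadget $J_k$ lengthens every path crossing it. As in Lemmas~\ref{lem_J-2-segment} and \ref{lem_J-segment}, any short cycle created by the operation would correspond, after substituting the straight path through the configuration back in, to a $12^-$-cycle of $G$; such a cycle is either separating --- excluded by Lemma~\ref{lem_separating-cycle} --- or forces a chord splitting it into a triangle and two $9^+$-cycles, contradicting its length. The bound $k\le 5$ is exactly what keeps these substituted paths short enough for this counting to close, and the same estimate shows the operation never identifies (or joins) two external vertices, so $\phi_0$ remains proper in $G'$.

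The heart of the argument is the coloring extension, for which I would first isolate a $J_k$-analogue of Lemma~\ref{lem_I-segment}. The point is that each pendant positive triangle $T'_i=[x_iy_iz_i]$ carries two internal $3$-vertices $x_i,y_i$ whose only neighbors outside $J_k$ are pre-colored; a short analysis of $T'_i$ (via Lemma~\ref{lem_recolor}(1) when the two external neighbours differ, and directly otherwise) shows that $z_i$ always has at least one colour, so $z_i$ forbids at most one colour for its neighbour $w_i$. Hence every apex $w_i$ behaves exactly like an apex of $I_k$ --- an internal $3$-vertex with a single pre-colored neighbour --- and the path-and-apex part of $J_k$ becomes colour-theoretically an $I_k$. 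Running the inductive peeling of Lemma~\ref{lem_I-segment} inward from both ends then yields the desired two-colour freedom at each of the ports $u_0$ and $u_k$ simultaneously. Consequently $v_1=u_0$ and $v_2=u_k$ can be treated precisely as the single $4$-vertex $v$ was in the base case (each acting like a vertex with one pre-colored neighbour), so after extending $\phi_0$ to $(G',\sigma')$ by minimality I can rerun the base-case extension and then fill in the interior of $J_k$.

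I expect the main obstacle to be this two-port freedom: unlike the single-port $I_k$, here $u_0$ and $u_k$ must be coloured consistently with both arms at once, and the pendant constraints must be propagated along the whole path joining the two ports. The worst case is when, at every pendant triangle, the two pre-colored external neighbours receive the \emph{same} colour, so that each $w_i$ genuinely loses one colour; checking that enough joint freedom survives in this case --- which is where the positivity of the triangles $T'_i$ and the precise $J_k$ structure are used --- is the delicate step. The cycle-creation analysis of the second paragraph is the other place demanding care, since the elongated paths through $J_k$ make the length estimates tighter than in the $I_k$-extension lemmas.
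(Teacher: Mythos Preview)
Your high-level framing is sensible, but the heart of the argument --- the ``two-port freedom'' for $J_k$ --- is not true, and this is a genuine gap rather than a detail to be filled in. Your reasoning that each apex $w_i$ ``behaves exactly like an apex of $I_k$'' conflates two things: yes, the pendant positive triangle reduces to at most one forbidden colour for $w_i$, but the backbone triangles $T_i=[u_{i-1}u_iw_i]$ of $J_k$ carry \emph{no sign restriction}, whereas the peeling of Lemma~\ref{lem_I-segment} relies essentially on the \emph{negativity} of the triangles in $I_k$ (via Lemma~\ref{lem_recolor}(2)). With a positive backbone triangle the three vertices must be pairwise distinct, and this couples the two ports rigidly. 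Concretely, take $k=1$ with all edges straight and $T_1$ positive; if the outer neighbours of $x_1,y_1$ both receive colour~$1$, then $z_1=1$ is forced, hence $w_1\in\{2,3\}$, hence $\{u_0,u_1\}$ must contain colour~$1$. So the two ports do \emph{not} have independent two-colour freedom: any assignment with $u_0\ne 1$ and $u_1\ne 1$ is dead. Now in your reduction for c-1 (identifying $u'$ with $x'$ as in b-1 and deleting everything else), the colouring of $G'$ may well give $u'\equiv x'$, $v'$, $y'$ all the same colour~$\alpha$; then Lemma~\ref{lem_recolor}(1) forces both end triangles to put colour~$\alpha$ on $w_1$ and on $w_{k+1}$, and for small $k$ this is incompatible with the backbone (already for $k=1$ they are adjacent).

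The paper sidesteps exactly this coupling by a different reduction that you do not anticipate: it \emph{keeps the entire backbone $Q=\bigcup_i[w_i z_i w_{i+1}]$ inside $G'$}, performs two separate identifications $u'\!\sim w_1$ and $x'\!\sim w_{k+1}$, and in addition \emph{inserts a straight edge $e_i$ between $z_i$ and $r_i'$} for every pendant triangle. The point of the inserted edges is that in any colouring of $G'$ one automatically gets $\phi(z_i)\ne\phi(r_i')$, which is precisely the hypothesis of Lemma~\ref{lem_recolor}(1) for the positive pendant triangle $[r_is_it_i]$; so each pendant can be filled in afterwards regardless of how the backbone was coloured. Because the backbone sits in $G'$, there is no two-port problem at all --- the minimality hypothesis colours it for you. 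The price is a much more delicate cycle-creation analysis (a new short cycle in $G'$ may use several of the inserted edges $e_i$, and one has to break it into ``segments'' and bound $\sum|C_i|$ using $k\le 5$), and this is where the bound $k\le 5$ is actually consumed, not in a coarse path-length estimate as you suggest. Finally, c-2 and c-3 are handled in the paper as $I_k$-extensions of c-1 (at $y$, resp.\ at $y$ and $v$), reusing this same backbone-preserving reduction, rather than as $J_k$-extensions of b-2, b-3.
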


\begin{proof}	
	(1) Suppose to the contrary that $G$ has Configuration c-1, say $H$.
	Note that for each $i\in\{1,2,\ldots,k\}$, the vertices $r_i,s_i,t_i$ are labelled in clockwise order around their incident triangle no matter on which side of the path $w_1w_2\ldots w_{k+1}$ this triangle is located.
	By Remark \ref{remark1}, we may assume that edges of $u'uw_1$, $z_1t_1r_1r_1'$, $z_2t_2r_2r_2'$, $\ldots$, $z_kt_kr_kr_k'$, $w_{k+1}xx'$ are all straight.
	Let $Q=[w_1z_1w_2]\cup [w_2z_2w_3]\cup \cdots \cup [w_kz_kw_{k+1}]$.
	Remove $V(H)\setminus V(Q)$ from $(G,\sigma)$, identify $u'$ with $w_1$ into a new vertex $w^*_1$, and $w_{k+1}$ with $x'$ into a new vertex $w^*_{k+1}$, and insert a new straight edge   $e_i$ between $r_i'$ and $z_i$ for each $i\in\{1,2,\ldots,k\}$. Denote by $(G',\sigma')$ the resulting $S_3$-signed graph.
	We shall show that $(G',\sigma')\in \mathcal{G}$ and $\phi_0$ is still proper in $G'$.
	
	We claim that the operation creates no $8^-$-cycles and consequently, $(G',\sigma')\in \mathcal{G}$.
	Otherwise, denote by $C'$ this new $8^-$-cycle. A {\em segment} of $C'$ is a   subpath $P'$ of $C'$ such that each of its  two ends  is either a new vertex $w^*_1$ or $w^*_{k+1}$, or a new edge   $e_j = z_jr'_j$, and whose interior vertices are in $Q$.  Let  $P'_1,P'_2,\ldots,P'_l$ be the segments of $C'$. 
	For $1\leq i \leq l$, construct a path $P_i$ from $P'_i$  as follows: if $e_j= z_jr'_j$ is an edge of $P'_i$, then   replace $e_j$ by $z_jt_jr_jr_j'$; if $w^*_1 $ (resp., $w^*_{k+1}$) is an end vertex of $P'_i$, then replace $w^*_1$ by $w_1uu'$ (resp., replace $w^*_{k+1}$    by   $w_{k+1}xx'$). 
	Denote by $C$ the cycle obtained from $C'$ by replacing $P'_i$ by $P_i$ for   $i=1,2,\ldots, l$.
	
    Note that if the two ends of $P'_i$ are new edges, then $P'_i$ contains at least two edges of $Q$, and hence $|P'_i| \ge  4$; and $|P'_i| \ge 2$ if one end of $P'_i$ is a new vertex and the other end is a new edge.  As there are only two new vertices, $ 8 \ge |C'| \ge \sum_{i=1}^{l}|P'_i|\geq 4l-4$ and hence $1\leq l\leq 3$. 
   
   It follows from the construction that $|P_i|=|P'_i|+4$ and $|C|=|C'|+4l$.

	If $l=1$, then   $C$ is a separating $12^-$-cycle of $G$, contradicting Lemma \ref{lem_separating-cycle}.
	If $l\in \{2,3\}$, then 
	  $C-\bigcup_{i=1}^{l}E(P_i)\cap E(Q)$ consists of $l$ paths $R_1,R_2,\ldots, R_l$ in $G-Q$, each    $R_i$ together with the shortest path $R'_i$ of $Q$ connecting the two end vertices of $R_i$ forms a cycle $C_i$ of $G$. 
        Note that each $R'_i$ has length at most $k \le 5$.
        Moreover, each path $E(P_i)\cap E(Q)$ has length at least $2$, except that there are at most two such paths that  has one end vertex in $\{w_1, w_{k+1}\}$, and has length at least 1. So 
        $8+4l \ge |C'|+4l=|C| \ge \sum_{i=1}^l |R_i| + 2l-2$ and hence, 
        $\sum_{i=1}^l|C_i| = \sum_{i=1}^l (|R_i|+|R'_i|) \le \sum_{i=1}^l|R_i| + 5l \le (10+2l) + 5l =10+7l$. So one of the $C_i$ is a separating $12^-$-cycle of $G$, a contradiction.

	Note that all the vertices of $Q$ are internal. The operation makes $\phi_0$ still proper in $G'$.
	
	By the minimality of $(G,\sigma)$, $\phi_0$ can be extended to $(G',\sigma')$ and further to $(G,\sigma)$ as follows: since $u'$ and $w_1$ receive the same color, we can properly color $v$ and $u$ in turn, and so do $y$ and $x$. For $1\leq i \leq k$, properly color $s_i$.  Since $z_i$ and $r_i'$ receive distinct colors, the resulting coloring can be extended to $r_i$ and $t_i$ by Lemma \ref{lem_recolor}.

    (2) By definition, Configuration c-2 (resp., c-3) is an $I_k$-extension of Configuration c-1 at $y$ (resp., at $y$ and $v$). So by a similar proof as for (1), we can show that $G$ has neither c-2 nor c-3.
\end{proof}

\begin{figure}
        \centering
	\includegraphics[width=16cm]{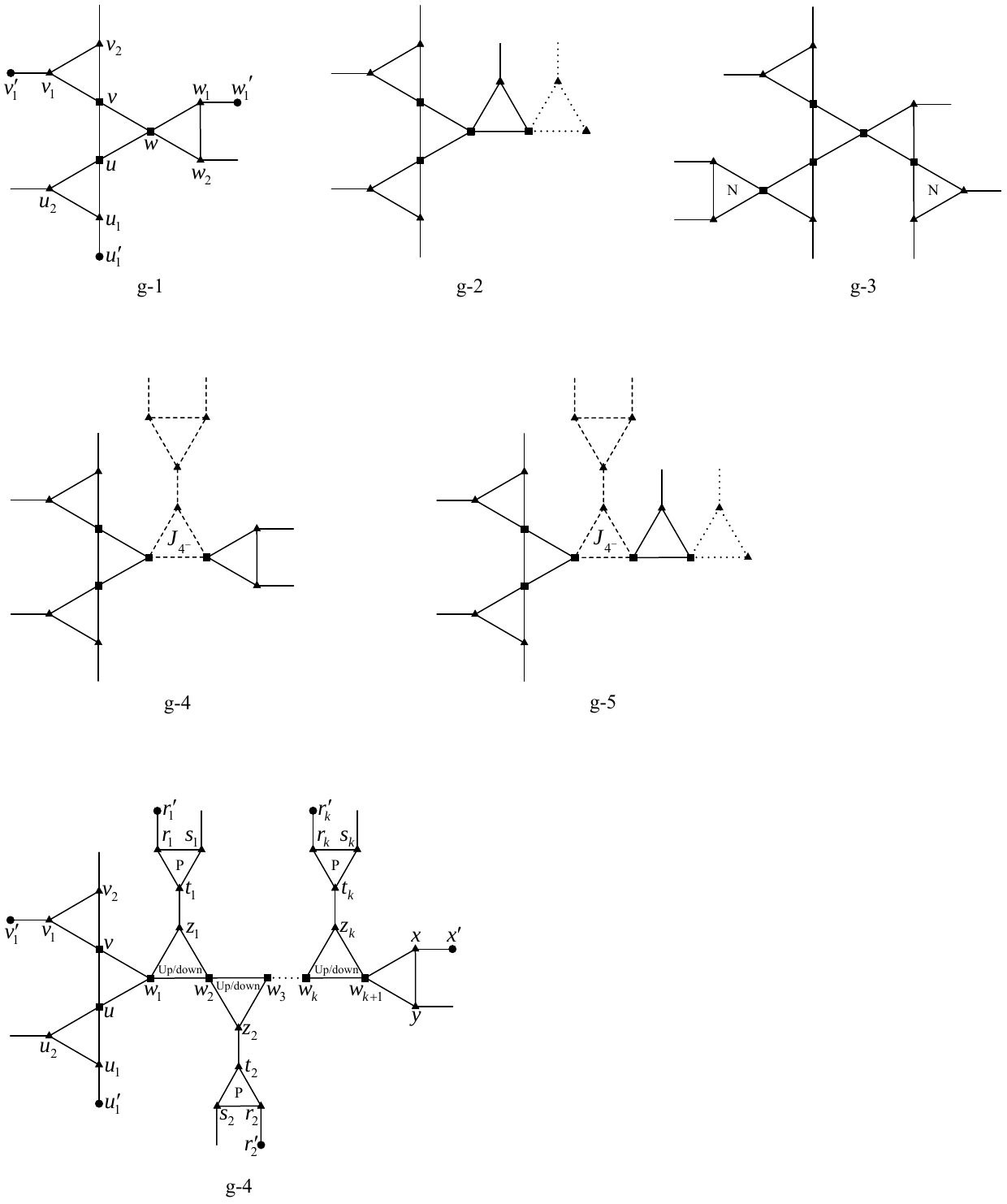}
        \caption{Configurations g-1, g-2, g-3, g-4, g-5, where $J_{4^-}$ indicates a $J_{k}$-extension with any $k\leq 4$}
        \label{fig-g}
        \FloatBarrier
\end{figure}

\begin{lemma}\label{lem_type-g}
	$G$ has none  of Configurations  g-1, g-2, g-3, g-4, g-5, see Figure \ref{fig-g}.
\end{lemma}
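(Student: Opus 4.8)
The plan is to treat Lemma \ref{lem_type-g} by the same deletion-and-identification scheme that handles Configurations b, d, e, and c, with the configurations g-2 through g-5 obtained from a single base configuration (presumably g-1) by $I_k$- and $J_k$-extensions. First I would prove the base case directly. Applying Remark \ref{remark1}, I would straighten a suitable acyclic set of edges running through the configuration (the paths joining the outer neighbors of the degree-$3$ vertices, together with the triangle edges that may be assumed straight). I would then delete the internal vertices of the configuration and build $(G',\sigma')$ by identifying the prescribed pairs of ``port-type'' vertices and/or inserting straight edges between prescribed pairs, exactly as in the reductions of Lemmas \ref{lem_type-d}, \ref{lem_type-e}, and \ref{lem_type-c}.

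The first verification is that $(G',\sigma')\in\mathcal{G}$, i.e.\ that no cycle of length at most $8$ is created. Any such new cycle lifts to a path in $G$ which, when closed up through the deleted part of the configuration, produces a $12^-$-cycle $C$. As in the earlier proofs, either a triangle apex lies on one side of $C$ and another apex on the other side, making $C$ separating and contradicting Lemma \ref{lem_separating-cycle}; or a triangle vertex lies on $C$, so two triangle edges split $C$ into a $3$-cycle and two $9^+$-cycles, contradicting $|C|\le 12$. When the $J_{4^-}$-parts are present I would reuse the \emph{segment decomposition} of Lemma \ref{lem_type-c}: decompose the offending $8^-$-cycle $C'$ into segments whose ends are the new vertices or the inserted edges, replace each segment by its preimage path (lengthening by a controlled amount), and bound $\sum_i|C_i|$ to force one of the resulting cycles $C_i$ to be a separating $12^-$-cycle. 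I would then check that no identification merges two external vertices, so that $\phi_0$ remains proper in $G'$, using $|D|\le 12$ to rule out short boundary cycles.

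For the coloring step I would extend $\phi_0$ to $(G',\sigma')$ by minimality and then restore the deleted vertices in a fixed order. At each $3$-face I would invoke Lemma \ref{lem_recolor}: a non-straight triangle edge forbids only one color for the apex, so a precolored pair $u',v'$ acts like a single precolored neighbor of $w$. Each $I_k$-port is then handled by Lemma \ref{lem_I-segment}, which lets the port behave like a $3$-vertex with one precolored neighbor (two admissible colors, one killed by the outside neighbor). For the $J_k$-extensions I would use the two-color freedom furnished by the $J_k$ coloring argument underlying Lemmas \ref{lem_J-2-segment} and \ref{lem_type-c}, coloring the central path and the attached triangles in turn. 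The configurations g-2 through g-5, being $I_k$- and/or $J_{4^-}$-extensions of g-1 at the indicated ports, then reduce to the base case precisely as c-2 and c-3 reduce to c-1 and e-3 reduces to e-2.

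The main obstacle will be the cycle-length bookkeeping in the presence of the $J_{4^-}$-extensions. Unlike the $I_k$ case, a $J_k$-extension introduces two ports and several identifications/edge-insertions at once, so the segment argument of Lemma \ref{lem_type-c} must be adapted to however many new vertices and edges the particular g-configuration creates; the delicate point is guaranteeing that among the several cycles $C_i$ produced by the lift at least one is a genuine separating $12^-$-cycle, while simultaneously keeping the coloring-extension order consistent across both the $I_k$ and $J_k$ parts of the configuration. I expect the geometric case analysis (which side of the central path each triangle sits on, mirroring Cases 1 and 2 of Lemma \ref{lem_J-2-segment}) to be the most tedious part, but no new idea beyond those already developed should be required.
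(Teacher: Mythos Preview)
Your plan is correct and essentially matches the paper's proof. Two small refinements: for g-1 the reduction performs \emph{three} simultaneous identifications ($u\leftrightarrow u_1'$, $v\leftrightarrow v_1'$, $w\leftrightarrow w_1'$), and the new-cycle check uses that any created $8^-$-cycle must meet the central triangle $[uvw]$ in a single path; for g-4 (and hence g-5) the paper sidesteps the geometric side-of-path case analysis you anticipate by simply observing that the diameter of $Q$ is at most $k+1\le 5$, which is precisely the bound needed for the c-1 segment argument to produce a separating $12^-$-cycle.
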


\begin{proof}
    (1) Suppose to the contrary that $G$ has Configuration g-1, say $H$.
	By Remark \ref{remark1}, we may assume edges of $u_1'u_1u, v_1'v_1v, w_1'w_1w$ are all straight.
	Remove $u_1,u_2,v_1,v_2,w_1,w_2$ from $(G,\sigma)$ and identify $u$ with $u_1'$, $v$ with $v_1'$, and $w$ with $w_1'$, obtaining a new $S_3$-signed graph $(G',\sigma')$. 
	
	We claim that the operation creates no $8^-$-cycles and consequently, $(G',\sigma')\in \mathcal{G}$.
	Otherwise, denote by $C$ this new $8^-$-cycle. Clearly, $C$ contains precisely one path of $[uvw]$ (that is either an edge or a 2-path). W.l.o.g., let $u$ and $v$ be ends of this path.
	So, $C$ corresponds to two paths of $G$, which together with $u_1'u_1u$ and $vv_1v_1'$ forms a separating $12^-$-cycle of $G$, contradicting Lemma \ref{lem_separating-cycle}.
	
	We claim that $\phi_0$ is still proper in $G'$. Otherwise, there exist at least two external vertices among $u'_1$, $v'_1$, and $w'_1$, w.l.o.g., say $u'_1$ and $v'_1$. So, the operation creates a cycle $C'$ formed by $uv$ and a path of $D$ with $|C'|\leq \frac{|D|}{2}+1\leq 7$, a contradiction.
	
	By the minimality of $(G,\sigma)$, $\phi_0$ can be extended to $(G',\sigma')$ and further to $(G,\sigma)$ as follows: since $u$ and $u_1'$ receive the same color, $u_2$ and $u_1$ can be properly colored in turn. So do $v_2$ and $v_1$, as well as $w_2$ and $w_1$. 
	
	(2) Note that Configuration g-2 is an $I_k$-extension of Configuration g-1 at $w_2$,
	and Configuration g-3 is an $I_1$-extension of Configuration g-1 at both $w_2$ and $u_2$. Therefore, by a similar argument as for (1), we can show that $G$ has neither g-2 nor g-3. 
	 
	 (3) Suppose to the contrary that $G$ has Configuration g-4, say $H$. Note that $H$ is a $J_k$-extension of Configuration g-1 at $w$ for some $k\leq 4$. We will follow the proof for the reducibility of both c-1 and g-1.
	 
	 By Remark \ref{remark1}, we may assume that edges of $u_1'u_1u$, $v_1'v_1v$, $z_1t_1r_1r_1'$, $z_2t_2r_2r_2'$, $\ldots$, $z_kt_kr_kr_k'$, and $w_{k+1}xx'$ are all straight.
	 Let $Q=[uvw_1]\cup [w_1z_1w_2]\cup \cdots \cup [w_kz_kw_{k+1}] \cup [w_{k+1}xy]$.
	 Remove $V(H)\setminus V(Q)$ from $(G,\sigma)$, identify $u_1'$ with $u$, $v'$ with $v$, and $x'$ with $w_{k+1}$, and insert a straight edge $e_i$ between $r_i'$ and $z_i$ for 
      each $i\in\{1,2,\ldots,k\}$. 
      Denote by $(G',\sigma')$ the resulting $S_3$-signed graph.

      By similar proof as for the reducibility of Configuration c-1, 
      we can show that $(G',\sigma')\in \mathcal{G}$ and $\phi_0$ is still proper in $G'$. 
      Note that here the distance of any two vertices of $Q$ is at most $k+1$, which is still no more than $5$, since $k\leq 4$. So, the similar proof still works.
      Finally, $\phi_0$ can be extended to $(G',\sigma')$ and further to $(G,\sigma)$ in a similar way.

	 (4) Note that Configuration g-5 is an $I_k$-extension of Configuration g-4 at $y$. By similar argument as for (3), we can show that $G$ has no g-5. 
\end{proof}

\begin{figure}
	\centering
	\includegraphics[width=15cm]{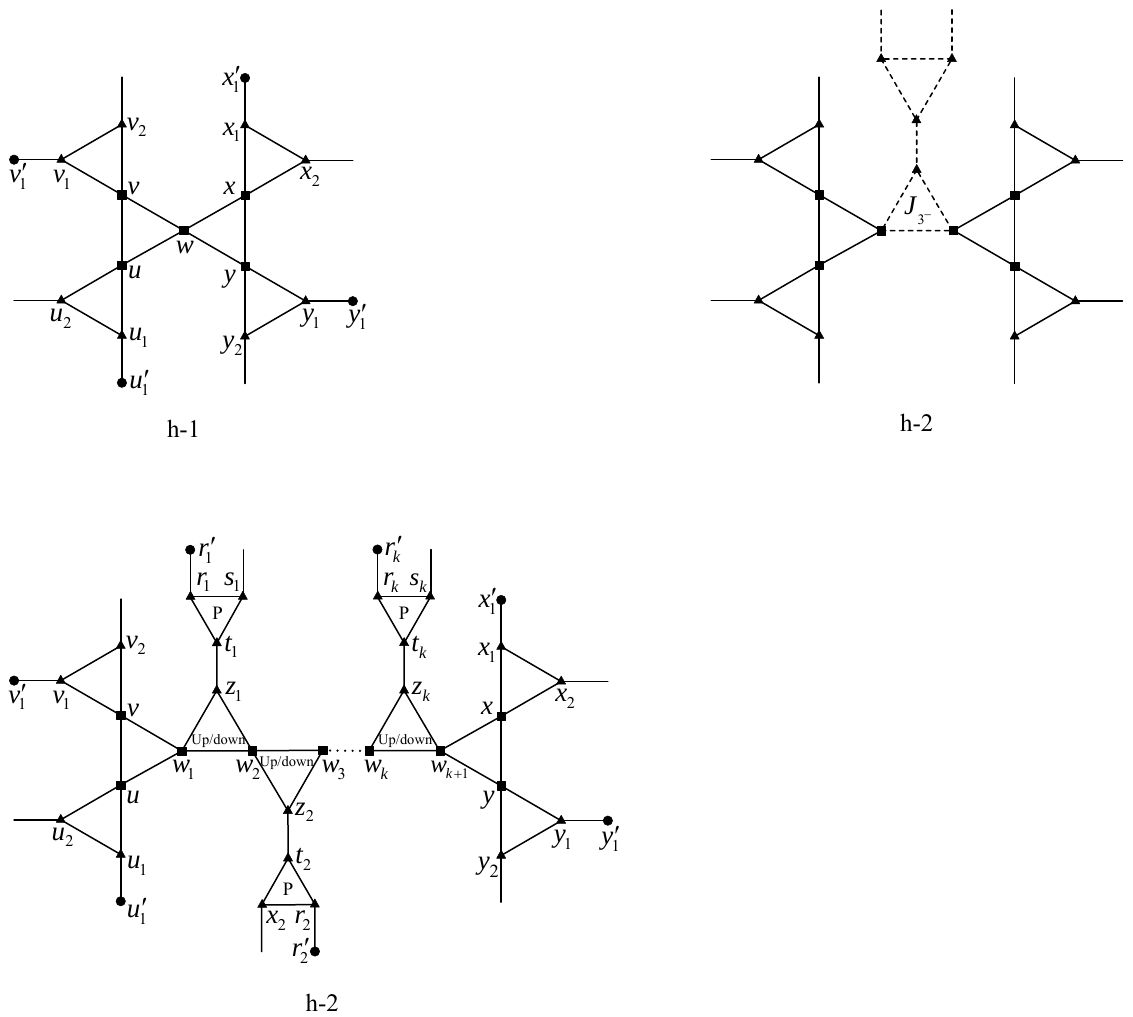}
	\caption{Configurations h-1 and h-2}\label{fig-proof-h}
\end{figure}

\begin{lemma}\label{lem_type-h}
	$G$ has none of Configurations  h-1 and h-2, see Figure \ref{fig-proof-h}.
\end{lemma}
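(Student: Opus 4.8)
The plan is to follow the reduction template that has been used throughout this section. I would first suppose, for contradiction, that $G$ contains Configuration h-1, say $H$, and apply Remark \ref{remark1} to switch so that a maximal acyclic set of the edges incident with the low-degree (degree-$3$ and degree-$4$) vertices of $H$ becomes straight; this normalizes the signs and reduces the problem to pure extendability. I would then build a reduced $S_3$-signed graph $(G',\sigma')$ by deleting the forced-interior low-degree vertices of $H$ and reconnecting the surviving outer vertices, either by identifying port-type vertices with their outer neighbors or by splicing in one straight edge per pendant triangle, exactly as in the proofs of Lemmas \ref{lem_type-d}, \ref{lem_type-c}, and \ref{lem_type-g}. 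Judging from the placement of h-1 and h-2 after the g-configurations in Figure \ref{fig-proof-h}, I expect the operation to be the analogue of the c-1/g-4 operation: retain a short ``spine'' $Q$ consisting of the central triangles, identify the two port vertices with their outer neighbors, and insert a single straight edge $e_i=z_ir_i'$ for each pendant triangle.

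Next I would verify the two membership conditions. To show $(G',\sigma')\in\mathcal{G}$, I would assume a new $8^-$-cycle $C'$ were created and decompose it into \emph{segments} whose interiors lie in $Q$ and whose ends are the newly created vertices or the newly inserted edges, just as in the segment analysis for Lemma \ref{lem_type-c}. Reconstructing each segment by reinserting the deleted triangles produces a cycle $C$ of $G$, and a length count, using that $|Q|$ is bounded by the small number of spine triangles and that at most two segment-ends are ports, forces either $C$ itself or one of the cycles $C_i$ obtained by closing a reconstructed arc through the short spine of $Q$ to be a separating $12^-$-cycle, contradicting Lemma \ref{lem_separating-cycle}. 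To show $\phi_0$ remains proper in $G'$, I would check that no identification or new edge joins two external vertices; otherwise one gets a cycle of length at most $\frac{|D|}{2}+1\le 7$, contradicting the no-short-cycle conclusion just established.

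By minimality of $(G,\sigma)$, $\phi_0$ then extends to $(G',\sigma')$, and I would lift this coloring back to $(G,\sigma)$. The key local tool is Lemma \ref{lem_recolor}: each deleted degree-$3$ vertex on a non-straight triangle forbids at most one color at the corresponding apex, and each pendant triangle can be colored after its spine vertex, so the deleted vertices can be colored greedily in a prescribed order matching the identifications. For Configuration h-2, which (following the pattern of b-2, c-2, d-2, e-3, g-2, and g-5) I expect to be an $I_k$-extension of h-1 at one of its outer vertices, I would invoke Lemma \ref{lem_I-segment} to treat that vertex as a degree-$3$ vertex with a single precolored neighbor, and then repeat the h-1 argument verbatim.

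I expect the main obstacle to lie in the short-cycle verification for the denser configuration: when the operation performs several identifications and edge insertions at once, a created $8^-$-cycle may traverse multiple new features, so the segment decomposition and the summation of segment lengths must be tight enough to drive one reconstructed cycle strictly below length $12$ while keeping it separating. Controlling this requires the bound on $|Q|$ together with the fact that only two segment-ends are ports, precisely the delicate counting that appeared in the proof of Lemma \ref{lem_type-c}.
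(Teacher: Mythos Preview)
Your overall template is right, but two concrete guesses about the structure of h-1 and h-2 are wrong, and both matter for the proof.

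For h-1, the spine $Q$ is $[uvw]\cup[wxy]$: two positive $(3,3,4)$-faces sharing the $4_{\bowtie}$-vertex $w$. There are \emph{four} port vertices $u,v,x,y$, each an internal $3$-vertex with one outer neighbor $u'_1,v'_1,x'_1,y'_1$ (through a pendant positive triangle), and the reduction identifies all four pairs simultaneously; there are no $e_i$ edges at this stage. The segment analysis then has $l\in\{1,2\}$, and the $l=2$ case is not handled by the c-1 length count. Instead, when $l=2$ one segment necessarily joins $u^*$ to $v^*$ and the other joins $x^*$ to $y^*$; rerouting both through the shared vertex $w$ via the paths $v'_1v_1vwxx_1x'_1$ and $u'_1u_1uwyy_1y'_1$ splits the reconstructed cycle into two cycles $C_1,C_2$ meeting only at $w$, with $|C_1|+|C_2|\le |C|+2\le 18$, so one of them is a separating $9^-$-cycle. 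Your plan to reuse the c-1 summation verbatim would not produce this rerouting step.

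For h-2, your guess that it is an $I_k$-extension at an outer vertex is incorrect: h-2 is a $J_k$-extension of h-1 at the central vertex $w$, with $k\le 3$. Consequently the reduction combines both features: the spine becomes $Q=[uvw_1]\cup[w_1z_1w_2]\cup\cdots\cup[w_kz_kw_{k+1}]\cup[w_{k+1}xy]$, one performs the four identifications at $u,v,x,y$ \emph{and} inserts the straight edges $e_i=z_ir'_i$ for $1\le i\le k$. The short-cycle verification then follows the c-1 segment argument, using that the $Q$-diameter is at most $k+2\le 5$. Invoking Lemma \ref{lem_I-segment} here, as you propose, would not apply, since no $I_k$ is present.
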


\begin{proof}
	(1) Suppose to the contrary that $G$ has Configuration h-1, say $H$.
	By Remark \ref{remark1}, we may assume that edges of $u_1'u_1u$, $v_1'v_1v$, $x_1'x_1x$, and $y_1'y_1y$ are all straight.
	Let $Q=[uvw]\cup [wxy]$.
	Remove $V(H)\setminus V(Q)$ from $(G,\sigma)$ and identify $u$ with $u_1'$, $v$ with $v_1'$, $x$ with $x_1'$, and $y$ with $y_1'$ (resulting in four new vertices $u^*, v^*, x^*, y^*$, respectively). We thereby obtain a new $S_3$-signed graph $(G',\sigma')$.

    We claim that the operation creates no $8^-$-cycles and consequently, $(D',\sigma')\in \mathcal{G}$.
    Otherwise, denote by $C'$ this new $8^-$-cycle. 
    A {\em segment} of $C'$ is a subpath $P'$ of $C'$ such that both of its two ends are in $\{u^*, v^*, x^*, y^*\}$, and whose interior vertices are in $Q$.  Let  $P'_1,P'_2,\ldots,P'_l$ be the segments of $C'$. By the structure of $Q$, $1\leq l\leq 2$.
    For $1\leq i \leq l$, construct a path $P_i$ from $P'_i$  as follows: if $u^*$ (resp., $v^*, x^*, y^*$) is an end vertex of $P'_i$, then replace $u^*$ by $uu_1u'_1$ (resp., replace $v^*$ by $vv_1v'_1$, replace $x^*$ by $xx_1x'_1$, replace $y^*$ by $yy_1y'_1$). 
	Denote by $C$ the cycle obtained from $C'$ by replacing $P'_i$ by $P_i$ for   $i=1,2,\ldots, l$.
	Clearly, $|C|=|C'|+4l$.
	For $l=1$,  $C$ is a separating $12^-$-cycle of $G$, contradicting Lemma \ref{lem_separating-cycle}.
	For $l=2$, one of $P'_1$ and $P'_2$ connects $u^*$ with $v^*$ and the other connects $x^*$ with $y^*$. So, by replacing $P_1\cup P_2$ by $v'_1v_1vwxx_1x'_1\cup u'_1u_1uwyy_1y'_1$, we can obtain from $C$ two cycles of $G$, say $C_1$ and $C_2$, with $w$ as their only common vertex. Clearly, $|C_1|+|C_2|\leq |C|+2\leq 18$. So, at least one of $C_1$ and $C_2$ is a separating $9^-$-cycle, contradicting Lemma \ref{lem_separating-cycle}.
 
	We claim that $\phi_0$ is still proper in $G'$. Otherwise, either both $u_1'$ and $v_1'$ are external or both $x_1'$ and $y_1'$ are external.
	W.l.o.g., say the former case. Then the operation creates a cycle $C''$ formed by $u^*v^*$ and a path of $D$ with $|C''|\leq \frac{|D|}{2}+1\leq 7$, a contradiction.
	
	By the minimality of $(G,\sigma)$, $\phi_0$ can be extended to $(G',\sigma')$ and further to $(G,\sigma)$ as follows: since $u$ and $u_1'$ receive the same color, $u_2$ and $u_1$ can be properly colored in turn. So do pairs $v_2$ and $v_1$, $x_2$ and $x_1$, $y_2$ and $y_1$.
	
	(2) Suppose to the contrary that $G$ has Configuration h-2, say $H$. Note that $H$ is a $J_k$-extension of Configuration h-1 at $w$ for $k\leq 3$. 
	By Remark \ref{remark1}, we may assume that edges of $uu_1u_1'$, $vv_1v_1'$,   $z_1t_1r_1s_1'$, $z_2t_2r_2s_2'$, $\ldots$, $z_kt_kr_ks_k'$, $xx_1x_1'$, $yy_1y_1'$ are all straight.
	Let $Q=[uvw_1]\cup [w_1z_1w_2]\cup \cdots \cup [w_kz_kw_{k+1}] \cup [w_{k+1}xy]$.
	Remove $V(H)\setminus V(Q)$ from $(G,\sigma)$, identify $u'$ with $u$, $v'$ with $v$, $x'$ with $x$, and $y'$ with $y$, and insert a straight edge $e_i$ between $r_i'$ and $z_i$ for each $i\in\{1,2,\ldots,k\}$. Denote by $(G',\sigma')$ the resulting $S_3$-signed graph.

      By similar proof as for the reducibility of Configuration c-1, 
      we can show that $(G',\sigma')\in \mathcal{G}$ and $\phi_0$ is still proper in $G'$. 
      Note that here the distance between any two vertices of $Q$ is at most $k+2$, which is still no more than $5$, since $k\leq 3$. So, the similar proof still works.
      Finally, $\phi_0$ can be extended to $(G',\sigma')$ and further to $(G,\sigma)$ in a similar way. 
\end{proof}

\begin{figure}
	\centering
	\includegraphics[width=14cm]{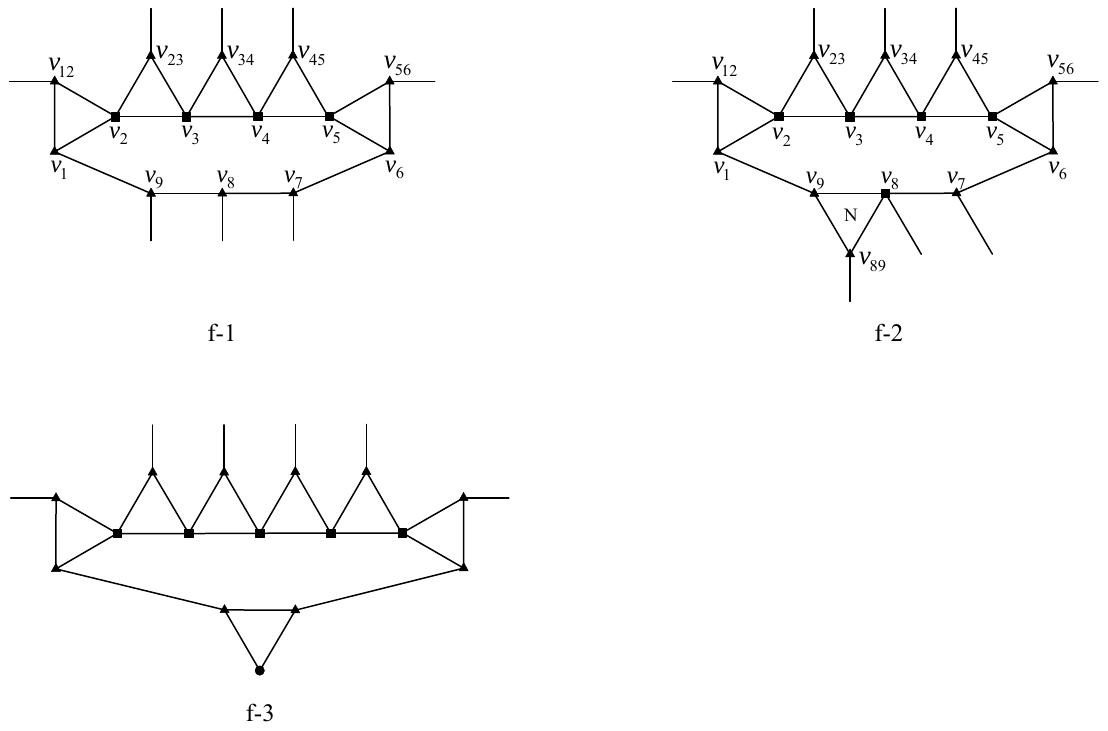}
	\caption{Configurations f-1, f-2, f-3}\label{fig-proof-f}
\end{figure}

\begin{lemma}\label{lem_type-f}
	$G$ has none of Configurations f-1, f-2, f-3, see Figure \ref{fig-proof-f}.
\end{lemma}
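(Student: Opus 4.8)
The plan is to reuse, essentially verbatim, the reduction machinery developed for Configurations b through h: assume the minimal counterexample $(G,\sigma)$ contains the configuration, perform a local surgery producing a strictly smaller $(G',\sigma')\in\mathcal{G}$ on which the boundary precoloring $\phi_0$ stays proper, color $(G',\sigma')$ by minimality, and then pull the coloring back to $(G,\sigma)$ with the help of Lemma \ref{lem_recolor}.

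Concretely, for f-1 I would first straighten, via Remark \ref{remark1}, the acyclic edges incident with the internal low-degree vertices to be deleted, together with the length-two ``legs'' joining the configuration to its outer (anchor) neighbors. The surgery then deletes the internal vertices of the configuration and identifies each anchor with the corresponding outer neighbor across a straightened leg. If (as in the $J_k$-based Configurations c-1 and h-1) f-1 retains an internal path $Q$ of 3-faces, I would additionally insert straight edges so that each retained 3-face still has the form required to invoke Lemma \ref{lem_recolor}(2).

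The heart of the argument is showing $(G',\sigma')\in\mathcal{G}$, i.e.\ that no cycle of length $4,6$ or $8$ is created, equivalently no $8^-$-cycle. Following the proof of Lemma \ref{lem_type-c}, I would take a putative new $8^-$-cycle $C'$, decompose it into segments whose endpoints are the identified vertices or the inserted edges, and lift each segment to its preimage in $G$ by replacing an identified anchor with its length-two leg and each inserted edge with its original length-three path. The bound $|C'|\le 8$ limits the number of segments, and the lifted cycle $C$ is then either a separating $12^-$-cycle---contradicting Lemma \ref{lem_separating-cycle}---or decomposes along $Q$ into several cycles, one of which must be a separating $12^-$-cycle because the retained path has length at most $5$. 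Propriety of $\phi_0$ on $G'$ is verified exactly as before: if the surgery identified or joined two external vertices, it would create a cycle of length at most $\frac{|D|}{2}+1\le 7$, contradicting the no-$8^-$-cycle conclusion just obtained.

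Finally, I expect f-2 and f-3 to be $I_k$- (respectively $J_k$-) extensions of f-1 at a port vertex, so that by Lemma \ref{lem_I-segment} the port behaves as a 3-vertex with a single precolored neighbor; the identical surgery and coloring-extension then apply after coloring the attached $I_k$ or $J_k$ first. The main obstacle will be the $8^-$-cycle verification when several identifications and inserted edges interact simultaneously: keeping the retained internal path short enough (pairwise distances at most $5$) so that every lifted cycle stays within length $12$ is the delicate bookkeeping, and it is the step most reliant on the full strength of the $\mathcal{G}$-hypothesis together with Lemma \ref{lem_separating-cycle}.
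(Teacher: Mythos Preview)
Your plan misreads the f-configurations and imports far more machinery than is needed. In the paper, Lemma \ref{lem_type-f} is handled by the simplest possible reduction: one deletes \emph{all} of $V(H)$, with no identifications and no inserted edges, applies minimality to color $(G-V(H),\sigma)$, and then colors the vertices of $H$ greedily in a carefully chosen order. For f-1, one first picks a color for $v_6$ (which has three permissible colors) so that $v_7$ retains two permissible colors, and then colors along the chain $v_{56},v_5,v_{45},v_4,\ldots,v_{12},v_1,v_9,v_8,v_7$; each vertex sees at most two colored neighbors when reached, and $v_7$ still has a color left at the end. For f-2 the only twist is that the negative triangle $[v_8v_9v_{89}]$ is colored via Lemma \ref{lem_recolor}(2) before finishing at $v_7$. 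Configuration f-3 is handled exactly as f-1. No $8^-$-cycle analysis, no segment bookkeeping, and no propriety check for $\phi_0$ are required, because simply deleting internal vertices cannot create a forbidden cycle or disturb $D$.

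Your specific structural guesses are also wrong: f-2 and f-3 are \emph{not} $I_k$- or $J_k$-extensions of f-1; they are variant $9$-face configurations (f-2 carries an extra negative triangle $[v_8v_9v_{89}]$), so your plan to ``color the attached $I_k$ or $J_k$ first'' via Lemma \ref{lem_I-segment} does not apply. More fundamentally, even for f-1 you never specify which vertices are identified or which edges inserted, so the crucial $8^-$-cycle verification is not carried out---and for a configuration built on a $9$-face with five attached triangles, the segment analysis you sketch would be considerably more delicate than in Lemmas \ref{lem_type-c} or \ref{lem_type-h}. The real point, and what makes these f-configurations different from b--e and g--h, is that the internal vertices form a graph whose list-coloring problem (with lists of size $3$ minus the number of outside neighbors) is solvable by a greedy order; once you see that, no surgery is needed at all.
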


\begin{proof}
	(1) Suppose to the contrary that $G$ has Configuration f-1, say $H$.
	By the minimality of $(G,\sigma)$, $\phi_0$ can be extended to $(G-V(H),\sigma)$ and further to $(G,\sigma)$ as follows: 
 Note that the vertex $v_6$ has three permissible color, and $v_7$ has at least two permissible colors. Choose a color for $v_6$ so that after $v_6$ is colored, $v_7$ still has at least two permissible colors.
 Then properly color all the remaining uncolored vertices in the order 
    $v_{56}$, $v_5$, $v_{45}$, $v_4$, $v_{34}$, $v_3$, $v_{23}$, $v_2$, $v_{12}$, $v_1$, $v_9$, $v_8$, $v_7$.
	
	(2) Suppose to the contrary that $G$ has Configuration f-2, say $H$.
	By the minimality of $(G,\sigma)$, $\phi_0$ can be extended to $(G-V(H),\sigma)$ and further to $(G,\sigma)$ as follows:  
    Similarly as in (1), we can choose a color for $v_6$ so that after $v_6$ is colored, $v_7$ still has at least two permissible colors.
    Then properly color vertices  
    $v_{56}$, $v_5$, $v_{45}$, $v_4$, $v_{34}$, $v_3$, $v_{23}$, $v_2$, $v_{12}$, $v_1$ in order. Note that $v_8$ has one pre-colored neighbor. Since the triangle $[v_8v_9v_{89}]$ is negative, by Lemma \ref{lem_recolor}, the resulting coloring can be extended to $[v_8v_9v_{89}]$. Finally, properly color $v_7$.


 
	(3) By similar argument as for (1), we can also show that $G$ has no Configuration f-3.
\end{proof}

\subsection{Discharging in $G$}\label{secch}

In what follows, let $V$, $E$, and $F$ be the set of vertices, edges, and faces of $G$, respectively. 
For each $x\in V\cup F$,
the {\it initial charge} $ch(x)$ of $x$ is defined as 
\begin{equation*}\label{eq_def_initial_charge}
ch(x)=
\begin{cases}
d(x)+4, \text{~if~} x=f_0;\\
d(x)-4, \text{~otherwise}.
\end{cases}
\end{equation*}
Move charge among elements of $V\cup F$ according to the following rules:
\begin{enumerate}[$R1.$]
  \setlength{\itemsep}{0pt}
  \item $f_0$ sends to each incident vertex charge $\frac{4}{3}$.
  \label{rule-ext-face} 
  
  \item A $C$-vertex $u$ sends to each incident 3-face $f$ with $f\neq f_0$ charge $\frac{5}{9}$ if $u$ is either an external 3-vertex or an external 4-vertex incident with two 3-faces; charge $1$ otherwise.
  \label{rule-C-vertex}

  \item Every $5^+$-face $f$ with $f\neq f_0$ sends charge $\frac{d(f)-4}{d(f)}$ to each of its incident vertices.
  \label{rule-face}

  \item Let $u$ be a $3_{\Delta}$-vertex of a snowflake $S$ such that the outer neighbor $u'$ of $u$ is not a vertex of $S$. \label{rule-A1-vertex}
  \begin{enumerate}[$(1)$]
  	\setlength{\itemsep}{0pt}
    \item If $u$ is a bad vertex and $u'$ is a $3_{\Delta^{\circ}}$- or $C$-vertex, then $u'$ sends charge $\frac{2}{9}$ to $u$; 
    \item If $u$ is a $3_{\Delta^+}$-vertex but not bad, and $u'$ is a $3_{\Delta^-}$-, $3_{\Delta^{\circ}}$-, or $C$-vertex, then $u'$ sends charge $\frac{2}{27}$ to $u$;
    \item If $u$ is a $3_{\Delta^-}$-vertex, and $u'$ is a $3_{\Delta^{\circ}}$- or $C$-vertex, then $u'$ sends charge $\frac{2}{27}$ to $u$. 
    \end{enumerate}

  \item Let $f$ be a nice 9-face related to a snowflake $S$, and $u$ be a nice vertex of $f$.
  \begin{enumerate}[$(1)$]
  	\setlength{\itemsep}{0pt}
  \item If $u$ is a 1-nice vertex of $f$, then $u$ sends charge $\frac{2}{27}$ to $S$ through $f$. 
  \item If $u$ is a 2-nice vertex of $f$, then $u$ sends charge $\frac{4}{27}$ to $S$ through $f$.
  \end{enumerate}  
  \label{rule_nice face}  
  \item Let $L$ be a string on the boundary of an $11^-$-face $f$ with $f\neq f_0$. If $u$ is a vertex adjacent to $L$, then $u$ sends to each vertex of $L$ charge $\frac{12-d(f)}{6d(f)}$. 
  \label{rule-string} 
\end{enumerate}

Let $ch^*(x)$ denote the {\it final charge} of an element $x$ of $V\cup F$ after the discharging procedure.
By Euler's formula $|V|-|E|+|F|=2$ together with Handshaking Theorem $2|E|=\sum\limits_{v\in V}d(v)=\sum\limits_{f\in F}d(f)$ , we can deduce from the definition of $ch(x)$ that 
\begin{align*}
\sum\limits_{x\in V\cup F}ch(x)
&=\sum\limits_{v\in V}(d(v)-4)+\sum\limits_{f\in F}(d(f)-4)+8\\ 
&=\sum\limits_{v\in V}d(v)-4|V|+\sum\limits_{f\in F}d(f)-4|F|+8\\
&=4(|E|-|V|-|F|)+8\\
&=0.
\end{align*}
 As $\sum\limits_{x\in V\cup F}ch^*(x)=\sum\limits_{x\in V\cup F}ch(x)$, 
to complete the proof of Theorem \ref{thm_main_extension} by deriving a contradiction, it suffices to show that $\sum\limits_{x\in V\cup F}ch^*(x)>0$.

The initial charge $ch(S)$ and the final charge $ch^*(S)$ of a snowflake $S$ are defined as follows:
\begin{equation} \label{eq_1}
ch(S)=\sum_{v\in 3_{\Delta}(S)\cup 4_{\bowtie}(S)}ch(v)+\sum_{f\in T(S)}ch(f)=-|3_{\Delta}(S)|-|T(S)|.
\end{equation}
\begin{equation*} 
ch^*(S)=\sum_{v\in 3_{\Delta}(S)\cup 4_{\bowtie}(S)}ch^*(v)+\sum_{f\in T(S)}ch^*(f).
\end{equation*}

\begin{claim} \label{claim_snow}
$ch^*(S)\geq0$ for each snowflake $S$ of $G$.
\end{claim}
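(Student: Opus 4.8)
The plan is to trace every charge that the rules move across the elements defining $ch^*(S)$—the vertices of $3_{\Delta}(S)\cup 4_{\bowtie}(S)$ and the faces of $T(S)$—and to show the total is nonnegative. The engine is the observation that \emph{every face of $G$ sharing an edge with a triangle has size at least $9$}: if a face $g$ meets the triangle $[abc]$ along $ab$, then $a\,c\,b$ together with the $a$--$b$ arc of $\partial g$ avoiding $ab$ is a cycle of length $d(g)+1$, and since $G$ omits the lengths $4,6,8$ while no two triangles share an edge, $d(g)\notin\{3,4,6,8\}$ and $d(g)+1\notin\{4,6,8\}$ force $d(g)\ge 9$. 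Hence $R3$ moves at least $\tfrac{5}{9}$ across each triangle-vertex/face incidence, and as $G$ is $2$-connected the two non-triangle faces at such a vertex are distinct, so every vertex of $3_{\Delta}(S)\cup 4_{\bowtie}(S)$ collects at least $\tfrac{10}{9}$ from $R3$; in particular each $4_{\bowtie}$-vertex, starting at charge $0$, retains a surplus of at least $\tfrac{10}{9}$.

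Combining this with the incidence identity $3|T(S)|=n_3+2n_4+t_1(S)+t_2(S)$ (where $n_3=|3_{\Delta}(S)|$, $n_4=|4_{\bowtie}(S)|$; count vertex–triangle incidences, noting that no triangle vertex is a $2$-vertex by Lemma \ref{lem_string}) and the $R2$-charge $\tfrac{5}{9}t_1(S)+t_2(S)$ that the $C$-vertices send into $T(S)$, the definition \eqref{eq_1} together with $ch^*(S)=ch(S)+(\text{charge received})$ rearranges, writing $\alpha$ for the net charge $R4$ delivers into the accounted vertices and $\beta=R5(S)\ge 0$, to
\[
ch^*(S)\ \ge\ -\tfrac{2}{9}\,n_3+\tfrac{4}{9}\,n_4+\tfrac{2}{9}\,t_1(S)+\tfrac{2}{3}\,t_2(S)+\alpha+\beta .
\]
The only negative term is $-\tfrac{2}{9}n_3$, so the entire content of the claim is that the $3_{\Delta}$-vertices get paid for.

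The bad vertices are paid for exactly: by Lemma \ref{lem_type-a} a $(3,3,3)$-face cannot be negative (it would be an $I_1$ with a degree-$3$ port), so every bad vertex lies on a positive $(3,3,3)$-face, and by Lemma \ref{lem_type-e} its outer neighbour is a $3_{\Delta^{\circ}}$- or $C$-vertex, whence $R4(1)$ feeds it precisely $\tfrac29$, cancelling its $-\tfrac29$ cost. A $3_{\Delta^{\pm}}$-vertex is comparatively cheap, its cost softened to $-\tfrac{4}{27}$ by the $\tfrac{2}{27}$ of $R4(2)/(3)$. The genuinely expensive vertices are the $3_{\Delta^{\circ}}$-vertices, each costing $-\tfrac29$ and possibly paying a further $\tfrac29$ outward when it serves as the outer neighbour of a bad vertex; these are funded by the $4_{\bowtie}$-surplus $\tfrac49 n_4$ and the $C$-terms, since a $3_{\Delta^{\circ}}$-vertex shares its triangle with two vertices that are $4_{\bowtie}$- or $C$-vertices. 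A local charge count over the triangles (splitting each $4_{\bowtie}$-surplus evenly between its two incident triangles) then pays for it, with equality only in long chains of triangles glued at $4_{\bowtie}$-vertices whose end triangles carry degree-$3$ ports.

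For those chains the inequality is tight and the residual deficit must come from $\beta$: short such chains are the reducible Configurations of Lemmas \ref{lem_type-c}, \ref{lem_type-g}, \ref{lem_type-h} and do not occur, while for long chains Lemma \ref{lem_J-segment} supplies at least $k-2$ related nice $9$-faces (genuine $9$-faces by Lemma \ref{lem_J-2-segment}), whose nice vertices pay the quanta $\tfrac{2}{27}$ and $\tfrac{4}{27}$ of $R5$, calibrated to clear the residue. The main obstacle is exactly this interlocking bookkeeping: one must confirm that $\alpha$ stays nonnegative after tallying every incoming and outgoing $R4$-transfer (each vertex has a unique outer edge, so pays out at most once), that no outer neighbour or nice vertex outside $S$ is charged twice by competing snowflakes, and that the tight positive-$(3,3,3)$-faces and the long chains compose without cumulative loss. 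Once the reducibility lemmas pin down the local environments so that these transfers are consistent, the triangles carrying a $C$-vertex—the overwhelming majority—fall to the elementary estimates above and the inequality closes.
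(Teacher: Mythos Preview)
Your framework is correct and matches the paper's: the incidence identity $3|T(S)|=n_3+2n_4+t_1+t_2$, the $\tfrac{10}{9}$ from the two adjacent $9^+$-faces, and the resulting inequality $ch^*(S)\ge -\tfrac29 n_3+\tfrac49 n_4+\tfrac29 t_1+\tfrac23 t_2+\alpha+\beta$ are exactly Formulas (2)--(3-3) in disguise. The separate treatment of the positive $(3,3,3)$-face is also right.

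The gap is that you stop precisely where the real work begins. Your sentence ``A local charge count over the triangles \ldots\ then pays for it, with equality only in long chains'' is not an argument, and the characterisation is wrong. After one uses $n_4\ge |T(S)|-1$ (tree case) and the parity/leaf constraint $|3_{\Delta^+}|+|3_{\Delta^-}|+2t_1+2t_2\ge 4$, the paper isolates \emph{six} residual structures (a)--(f), and three of them---(d), (e), (f)---have the auxiliary graph $H$ equal to a subdivided claw or worse, not a path. These are \emph{not} chains, and they are exactly what Lemmas \ref{lem_type-g} and \ref{lem_type-h} are designed to exclude in their short forms; your citation of those lemmas as handling ``short chains'' is therefore misplaced. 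For the genuine path cases (a)--(c), the paper does not merely invoke Lemma \ref{lem_J-segment}: it repeatedly uses Lemmas \ref{lem_type-b}, \ref{lem_type-d}, \ref{lem_type-e}, \ref{lem_type-f} to force the signs of specific triangles, to force $T_1,\ldots,T_{k-1}$ to lie on one side of the spine, and to control the degrees of the two or three boundary vertices $s_1,s_2,s_3$ of the adjacent $9$-face so that either $R4$ gains an extra $\tfrac{2}{27}$ or $R5$ fires. None of this is automatic, and several subcases (e.g.\ case a.1.2 with $k=4$, $d(f)=9$) are genuinely tight and require the full strength of the f-configurations.

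In short: you have correctly identified that $\alpha$ and $\beta$ must close a deficit of at most $\tfrac43$ in the tree case, but you have not shown that they do. The proof requires the explicit six-case structural analysis; the hand-wave ``calibrated to clear the residue'' does not substitute for it.
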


\begin{proof}
First assume that $S$ is a positive $(3,3,3)$-face. We can calculate from Formula \ref{eq_1} that $ch(S)=-4$. For each vertex $u$ of $S$, let $u'$ be the outer neighbor of $u$.
By Lemma \ref{lem_type-e}, $u'$ is not a $3_{\Delta^+}$-vertex or a $3_{\Delta^-}$-vertex. Clearly, $u'$ is neither a $4_{\bowtie}$-vertex nor a 2-vertex. So, $u'$ is a $3_{\Delta^{\circ}}$-vertex or a $C$-vertex.
By $R\ref{rule-A1-vertex}(1)$, $u$ receives charge $\frac{2}{9}$ from $u'$. Moreover, since $G\in \mathcal{G}$, $u$ is incident with two $9^+$-faces. By $R\ref{rule-face}$, $u$ receives charge at least $\frac{5}{9}$ from each of them. Therefore, $ch^*(S)\geq ch(S)+\frac{2}{9}\times 3+\frac{5}{9}\times 6=0.$

Assume  $S$ is not a positive $(3,3,3)$-face.
By Lemma \ref{lem_string}, $S$ contains no 2-vertices. By $R\ref{rule-face}$, each vertex of $3_{\Delta}(S)\cup 4_{\bowtie}(S)$ receives a total charge at least $\frac{5}{9}\times 2$ from its two incident $9^+$-faces.
For each $v\in 3_{\Delta}(S)$, let $v'$ be the outer neighbor of $v$.
If $v$ is a $3_{\Delta^+}$-vertex, then $v'$ is not by Lemma \ref{lem_type-e}. So, $v'$ is a $3_{\Delta^-}$-vertex  or a $3_{\Delta^{\circ}}$-vertex  or a $C$-vertex. By $R\ref{rule-A1-vertex}(2)$, $v$ receives charge $\frac{2}{27}$ from $v'$. 
If $v$ is a $3_{\Delta^-}$-vertex, then $v'$ might be a $3_{\Delta^+}$-vertex, for which case $v$ sends charge $\frac{2}{27}$ to $v'$.
If $v$ is a $3_{\Delta^{\circ}}$-vertex, then by $R\ref{rule-A1-vertex}$, $v$ sends to $v'$ charge $\frac{2}{9}$ if $v'$ is bad, and charge at most $\frac{2}{27}$  otherwise. 
Finally, the 3-faces of $S$ receive a total charge $\frac{5}{9}t_1(S)+t_2(S)$ from incident $C$-vertices by $R\ref{rule-C-vertex}$.
Therefore, 
\begin{equation}\label{eq_2}
\begin{split}
ch^*(S)\geq~
&ch(S)+\frac{5}{9}\times 2 \times (|3_{\Delta}(S)|+|4_{\bowtie}(S)|)+\frac{2}{27}\times |3_{\Delta^+}(S)|-\frac{2}{27}\times |3_{\Delta^-}(S)| \\
&-\frac{2}{9}\times (|3_{\Delta^{\circ}}(S)|-|3_{\Delta^{\star}}(S)|)-\frac{2}{27}\times |3_{\Delta^{\star}}(S)|+ \frac{5}{9}t_1(S)+t_2(S).    
\end{split}    
\end{equation}

Let $H$ be a graph whose vertex set $V(H)=T(S)$ and edge set $E(H)$ is given by: for any $f,g\in V(H)$, $fg\in E(H)$ if and only if $f$ and $g$ are intersecting at a $4_{\bowtie}$-vertex in $G$. Clearly, $H$ is a connected subcubic plane graph.

Notice that the number of 3-faces of $S$ containing $v$ is one if $v\in 3_{\Delta}(S)$, two if  $v\in 4_{\bowtie}(S)$, and $t_i(S,v)$ if $v\in C_i(S)$ for $i\in \{1,2\}$. Hence,
\begin{equation}\label{eq_3}
3|T(S)|= |3_{\Delta}(S)|+2|4_{\bowtie}(S)|+t_1(S)+t_2(S).
\end{equation}
Combining Formulas \ref{eq_1}, \ref{eq_2}, \ref{eq_3} gives
\begin{equation}\label{eq_3-3}
ch^*(S)\geq -\frac{4}{27}|3_{\Delta^+}(S)|-\frac{8}{27}|3_{\Delta^-}(S)|-\frac{4}{9}|3_{\Delta^{\circ}}(S)|+\frac{4}{27}|3_{\Delta^{\star}}(S)|+\frac{4}{9}|4_{\bowtie}(S)|+\frac{2}{9}t_1(S)+\frac{2}{3}t_2(S).
\end{equation}
Moreover, since $H$ is a connected graph, $E(H)\geq V(H)-1$, i.e.,
\begin{equation}\label{eq_4}
|4_{\bowtie}(S)|\geq |T(S)|-1.
\end{equation} 

If $H$ is not a tree, then Formula \ref{eq_4} can be strengthened as $|4_{\bowtie}(S)|\geq |T(S)|$, which together with
Formula \ref{eq_3} gives $|4_{\bowtie}(S)|\geq |3_{\Delta}(S)|+t_1(S)+t_2(S)$.
Hence, we can deduce from Formula \ref{eq_3-3} that
$ch^*(S)\geq \frac{8}{27}|3_{\Delta^+}(S)|+\frac{4}{27}|3_{\Delta^-}(S)|+\frac{4}{27}|3_{\Delta^{\star}}(S)|+\frac{2}{3}t_1(S)+\frac{10}{9}t_2(S)\geq 0$. Therefore, we may next assume that $H$ is a tree. 

We will first show that the following inequality holds: 
\begin{equation}\label{eq_8}
|3_{\Delta^+}(S)|+|3_{\Delta^-}(S)|+2t_1(S)+2t_2(S)\geq 4.
\end{equation}
Note that each leaf of $H$ corresponds to a 3-face of $S$ which contains a $C$-vertex or at least two $3_{\Delta^+}$- or $3_{\Delta^-}$-vertices. Formula \ref{eq_8} follows when $H$ is not an isolated vertex.
Next, let $H$ be an isolated vertex, i.e., $S$ is a 3-face.
By Lemma \ref{lem_type-a} and the assumption that $S$ is not a positive $(3,3,3)$-face,  $S$ contains a $C$-vertex.
It follows that $S$ contains either another $C$-vertex or two $3_{\Delta^+}$- or $3_{\Delta^-}$-vertices, yielding Formula \ref{eq_8} as well.

Combining Formulas \ref{eq_3} and \ref{eq_4} gives $|4_{\bowtie}(S)|\geq |3_{\Delta}(S)|+t_1(S)+t_2(S)-3.$
Hence, we can deduce from Formula \ref{eq_3-3} that
\begin{equation}\label{eq_11}
ch^*(S)\geq \frac{8}{27}|3_{\Delta^+}(S)|+\frac{4}{27}|3_{\Delta^-}(S)|+\frac{4}{27}|3_{\Delta^{\star}}(S)|+\frac{2}{3}t_1(S)+\frac{10}{9}t_2(S)-\frac{4}{3}.
\end{equation}

Clearly, both $3_{\Delta^+}(S)$ and $3_{\Delta^-}(S)$ are even integers, and if $t_1(S)=1$ then $t_2(S)\geq 1$. 
 So by applying Formula \ref{eq_8}, we can deduce from Formula \ref{eq_11} that $ch^*(S)\geq 0$ except the following six cases:
\begin{enumerate}[(a)]
	  \setlength{\itemsep}{0pt}
	\item $|3_{\Delta^+}(S)|=0$, $|3_{\Delta^-}(S)|=4$, $|3_{\Delta^{\star}}(S)|\leq 4$, $t_1(S)=t_2(S)=0$;	
	\item $|3_{\Delta^+}(S)|=2$, $|3_{\Delta^-}(S)|=2$, $|3_{\Delta^{\star}}(S)|\leq 2$, $t_1(S)=t_2(S)=0$;	
	\item $|3_{\Delta^+}(S)|=4$, $|3_{\Delta^-}(S)|=0$, $|3_{\Delta^{\star}}(S)|=0$, $t_1(S)=t_2(S)=0$;	
	\item $|3_{\Delta^+}(S)|=0$, $|3_{\Delta^-}(S)|=6$, $|3_{\Delta^{\star}}(S)|\leq 2$, $t_1(S)=t_2(S)=0$;
	\item $|3_{\Delta^+}(S)|=0$, $|3_{\Delta^-}(S)|=8$, $|3_{\Delta^{\star}}(S)|=0$, $t_1(S)=t_2(S)=0$;	
	\item $|3_{\Delta^+}(S)|=2$, $|3_{\Delta^-}(S)|=4$, $|3_{\Delta^{\star}}(S)|=0$, $t_1(S)=t_2(S)=0$.	
\end{enumerate}

Next we consider these exceptional cases, for which the calculation of $ch^*(S)$ will always be based on Formula \ref{eq_11}.
Since $t_1(S)=t_2(S)=0$, $S$ has no $C$-vertices. 
Recall that $H$ is a subcubic planar tree. So, $H$ contains two less vertices of degree 3 (in $H$) than leaves. Correspondingly, $S$ contains two less $(4,4,4)$-faces than $(3,3,4)$-faces. Moreover, the remaining 3-faces of $S$ must be $(3,4,4)$-faces.

For Cases a, b, and c: Since $|3_{\Delta^+}(S)|+|3_{\Delta^-}(S)|=4$, $H$ is a path. Denote by $k$ the length of $H$.
If $k=1$ (i.e., $H$ is an edge), then $S$ is Configuration b-1, which however is reducible by Lemma \ref{lem_type-b}. So, $k\geq 2$.
The snowflake $S$ can be labelled as follows: let $T_0=[uvw_1]$ and $T_k=[w_kxy]$ be two $(3,3,4)$-faces, and $T_i=[w_iz_iw_{i+1}]$ be a $(3,4,4)$-face for $1\leq i\leq k-1$ so that $T_j$ intersects with $T_{j+1}$ at the 4-vertex $w_{j+1}$ for $0\leq j\leq k-1$. Let $t_j$ be the outer neighbor of $z_j$.   

{\bf Case c:}
Since $G$ has no Configuration c-1 by Lemma \ref{lem_type-c}, $k\geq 7$.
By Lemma \ref{lem_J-segment}, the snowflake $S$ is related to at least four nice 9-faces, whose nice vertices send to $S$ a total charge at least $\frac{2}{27}\times 4$ by $R\ref{rule_nice face}$.
Therefore, it follows from Formula \ref{eq_11} that $ch^*(S)\geq \frac{8}{27}\times 4-\frac{4}{3}+\frac{2}{27}\times 4> 0.$

{\bf Case b:} W.l.o.g., let $T_0$ be negative.
Since $G$ has no Configuration d-1 by Lemma \ref{lem_type-d}, $T_1$ is negative.
Since $G$ has no Configuration b-2 by Lemma \ref{lem_type-b}, $k\notin \{2,3\}$.
Hence, $k\geq 4$.
Since $G$ has no Configuration e-3 by Lemma \ref{lem_type-e}, neither $t_1$ nor $t_2$ is a bad vertex. So, $|3_{\Delta^{\star}}(S)|=2$, i.e., all of $t_3,t_4,\ldots,t_{k-1}$ are bad vertices. 
Since $G$ has no Configuration c-2 by Lemma \ref{lem_type-c}, $k\geq 9$.
By Lemma \ref{lem_J-segment}, $S$ is related to at least four nice 9-faces, strengthening Formula \ref{eq_11} as $ch^*(S)\geq \frac{8}{27}\times 2+\frac{4}{27}\times 2+\frac{4}{27}\times 2-\frac{4}{3}+\frac{2}{27}\times 4> 0$ by $R\ref{rule_nice face}$.

{\bf Case a:} Similar to Case b, we can show that $k\geq 4$, both $T_{1}$ and $T_{k-1}$ are negative, and none of $t_1,t_2,t_{k-2},t_{k-1}$ are bad vertices.
Let $P=w_1w_2\ldots w_k$.

Case a.1: assume that $T_1, T_2, \ldots, T_{k-1}$ locate on the same side of $P$.
So, $P$ belongs to the boundary of some face, say $f=[uw_1w_2\ldots w_kys_1s_2\ldots s_n]$. 

Case a.1.1: let $k\geq 5$.
In this case, $|3_{\Delta^{\star}}(S)|=4$, i.e., $t_3,t_4,\ldots,t_{k-3}$ (if exist) are all bad vertices.
If $d(f)\geq 10$, then $f$ sends to each vertex of $uw_1w_2\ldots w_ky$ charge at least $\frac{6}{10}$ by $R\ref{rule-face}$, strengthening Formula \ref{eq_11} as $ch^*(S)\geq \frac{4}{27}\times 4+\frac{4}{27}\times 4-\frac{4}{3}+(\frac{6}{10}-\frac{5}{9})\times (2+k)>0.$ Next, let $d(f)=9$.
Then $k\in\{5,6,7\}$.
For $k=5$, since $G$ has no Configuration f-3 by Lemma \ref{lem_type-f}, at least one of $s_1$ and $s_2$ (w.l.o.g., say $s_1$) is not a $3_{\Delta}$-vertex. Then $y$ sends no charge to $s_1$ and instead, it receives charge $\frac{2}{27}$ from $s_1$, giving $ch^*(S)\geq \frac{4}{27}\times 4+\frac{4}{27}\times 4-\frac{4}{3}+(\frac{2}{27}+\frac{2}{27})=0.$
For $k\in\{6,7\}$, it is obvious that neither the outer neighbor of $u$ nor that of $y$ is a $3_{\Delta^+}$-vertex. So, both $u$ and $y$ send no charge to their outer neighbor, yielding $ch^*(S)\geq \frac{4}{27}\times 4+\frac{4}{27}\times 4-\frac{4}{3}+\frac{2}{27}\times 2=0$.

Case a.1.2: let $k=4$.
In this case, $t_2$ coincides with $t_{k-2}$, and thus $|3_{\Delta^{\star}}(S)|=3$.

If $d(f)\geq 11$, then $f$ sends to each vertex of $uw_1w_2w_3w_4y$ charge at least $\frac{7}{11}$ by $R\ref{rule-face}$, strengthening Formula \ref{eq_11} as $ch^*(S)\geq \frac{4}{27}\times 4+\frac{4}{27}\times 3-\frac{4}{3}+(\frac{7}{11}-\frac{5}{9})\times 6>0.$

Let $d(f)=10$. If both $s_1$ and $s_4$ are $3_{\Delta^+}$-vertices, say 3-faces $[s_1r_{12}s_2]$ and $[s_3r_{34}s_4]$, then at least one of $xys_1r_{12}$, $s_1s_2s_3s_4$, and $r_{34}s_4uv$ is Configuration e-2, contradicting Lemma \ref{lem_type-e}.
Hence, at least one of $s_1$ and $s_4$ is not a $3_{\Delta^+}$-vertex and correspondingly, at least one of $u$ and $y$ sends no charge to its outer neighbor. 
Moreover, $f$ sends to each vertex of $uw_1w_2w_3w_4y$ charge $\frac{6}{10}$ by $R\ref{rule-face}$.
Therefore, $ch^*(S)\geq \frac{4}{27}\times 4+\frac{4}{27}\times 3-\frac{4}{3}+\frac{2}{27}+(\frac{6}{10}-\frac{5}{9})\times 6>0.$

It remains to assume that $d(f)=9$.
If $s_1$ is a $4^+$-vertex or an external 3-vertex, i.e., $s_1$ is a 2-nice vertex of $f$, then $s_1$ sends charge $\frac{4}{27}$ to $S$ and $\frac{2}{27}$ to $y$, strengthening Formula \ref{eq_11} as  $ch^*(S)\geq \frac{4}{27}\times 4+\frac{4}{27}\times 3-\frac{4}{3}+\frac{4}{27}+(\frac{2}{27}+\frac{2}{27})=0.$ Hence, we may next assume that $s_1$ is an internal 3-vertex and similarly, so does $s_3$. Since $G$ has no Configuration f-1 by Lemma \ref{lem_type-f}, $d(s_2)\geq 4$. Then $s_1$ cannot be a $3_{\Delta^+}$-vertex, since otherwise $xys_1r_{12}$ is Configuration e-2. Similarly, neither does $s_3$. So, both $u$ and $y$ send no charge to their outer neighbors.
If $d(s_2)\neq 4$, i.e., $s_2$ is a 2-nice vertex of $f$, then $s_2$ sends charge $\frac{4}{27}$ to $S$, giving $ch^*(S)\geq \frac{4}{27}\times 4+\frac{4}{27}\times 3-\frac{4}{3}+\frac{2}{27}\times 2+\frac{4}{27}=0.$  
Next, let $d(s_2)=4$.
Since $G$ has no Configuration f-2 by Lemma \ref{lem_type-f}, neither $s_1$ nor $s_3$ is a $3_{\Delta^-}$-vertex. So, both $u$ and $y$ receive charge $\frac{2}{27}$ from their outer neighbors, giving $ch^*(S)\geq \frac{4}{27}\times 4+\frac{4}{27}\times 3-\frac{4}{3}+(\frac{2}{27}+\frac{2}{27})\times 2=0.$

Case a.2: assume that not all of $T_1, T_2, \ldots, T_{k-1}$ locate on the same side of $P$.
Recall that  both $T_{1}$ and $T_{k-1}$ are negative.
If $k=4$, then either $S$ is Configuration b-2 or $S$ contains Configuration d-2, contradicting Lemma \ref{lem_type-b} or \ref{lem_type-d}, respectively.
Hence, $k\geq 5$. 
It follows that $|3_{\Delta^{\star}}(S)|=4$, and thus $t_3,t_4,\ldots,t_{k-3}$ (if exist) are all bad vertices.

Case a.2.1: let $k=5$.
Since $G$ has none of Configurations b-2, b-3 and d-1, we can deduce that precisely one of $T_2$ and $T_3$ (w.l.o.g., say $T_3$) is positive, and $T_1$ locates on one side of $P$ while $T_2, T_3, T_4$ locate on the other side.
So, $t_1z_1w_2w_3w_4w_5$ belongs to the boundary of some face, say $f=[t_1z_1w_2w_3w_4w_5ys_1s_2\ldots]$. 
If $d(f)\geq 10$, then $f$ sends to each vertex of $z_1w_2w_3w_4w_5y$ charge at least $\frac{6}{10}$ by $R\ref{rule-face}$, strengthening Formula \ref{eq_11} as $ch^*(S)\geq \frac{4}{27}\times 4+\frac{4}{27}\times 4-\frac{4}{3}+(\frac{6}{10}-\frac{5}{9})\times 6>0.$
Next, let $d(f)=9$.
If $s_1$ is not a  $3_{\Delta^+}$- or $3_{\Delta^-}$-vertex, then $y$ receives charge $\frac{2}{27}$ from $s_1$ by $R\ref{rule-A1-vertex}$, strengthening Formula \ref{eq_11} as $ch^*(S)\geq \frac{4}{27}\times 4+\frac{4}{27}\times 4-\frac{4}{3}+(\frac{2}{27}+\frac{2}{27})=0.$ 
Next, let $s_1$ be a  $3_{\Delta^+}$- or $3_{\Delta^-}$-vertex.
Moreover, if at least one of $t_1$ and $s_2$ is a nice vertex of $f$, then it is a 2-nice vertex and sends charge $\frac{4}{27}$ to $S$, giving $ch^*(S)\geq \frac{4}{27}\times 4+\frac{4}{27}\times 4-\frac{4}{3}+\frac{4}{27}=0.$
Next, let $t_1$ be an internal 3-vertex and let $d(s_2)\leq 4$.
Since $G$ has no Configuration b-1, we can deduce that $t_1$ is not a  $3_{\Delta^+}$- or $3_{\Delta^-}$-vertex. 
Moreover, since $G$ has no Configuration e-2, we can deduce that $s_1$ cannot be a $3_{\Delta^+}$-vertex.
Now, both $y$ and $z_1$ send no charge to their outer neighbors, giving $ch^*(S)\geq \frac{4}{27}\times 4+\frac{4}{27}\times 4-\frac{4}{3}+\frac{2}{27}\times 2=0.$ 

Case a.2.2: let $k\geq 6$.
Since $G$ has no Configuration e-3 by Lemma \ref{lem_type-e}, both $T_2$ and $T_{k-2}$ are positive.
Since $G$ has neither Configuration d-2 nor Configuration c-3 by Lemma \ref{lem_type-d} or \ref{lem_type-c}, $T_1,T_2,T_{k-2},T_{k-1}$ locate on the same side of $P$.
Denote by $i$ the minimum index such that $T_i$ and $T_1$ locate on different sides of $P$, and $j$ the maximum one.
W.l.o.g., let $uw_1w_2\dots w_iz_it_i$ belong to the boundary of a face $f_1$, and $t_jz_jw_{j+1}w_{j+2}\ldots w_ky$ belong to the boundary of a face $f_2$.
If $d(f_1)\geq 10$, then $f_1$ sends to each vertex of $uw_1w_2\dots w_iz_i$ charge at least $\frac{6}{10}$ by $R\ref{rule-face}$, strengthening Formula \ref{eq_11} as $ch^*(S)\geq \frac{4}{27}\times 4+\frac{4}{27}\times 4-\frac{4}{3}+(\frac{6}{10}-\frac{5}{9})\times (2+i)>0$. We may next assume that $d(f_1)=9$ and similarly, $d(f_2)=9$. Notice that $3\leq i\leq j\leq k-3$. Since $G$ has no Configuration e-2, we can deduce that the outer neighbor of $u$ (resp., $y$) cannot be a $3_{\Delta^+}$-vertex and so it receives no charge from $u$ (resp., $y$), strengthening Formula \ref{eq_11} as $ch^*(S)\geq \frac{4}{27}\times 4+\frac{4}{27}\times 4-\frac{4}{3}+\frac{2}{27}\times 2=0$.

{\bf Case d}: Clearly, $H$ is a subdivision of a claw. Denote by $k_1,k_2,k_3$ ($1\leq k_1\leq k_2\leq k_3$) the length of paths between a leaf and the 3-vertex in $H$.
So, the snowflake $S$ can be labelled as follows:
let $T_0=[w_{11}w_{21}w_{31}]$ be a $(4,4,4)$-face, $T_{ik_i}=[w_{ik_i}x_iy_i]$ be a $(3,3,4)$-face, $T_{ij}=[w_{ij}z_{ij}w_{i,j+1}]$ be a $(3,4,4)$-face, $t_{ij}$ be the outer neighbor of $z_{ij}$ so that $T_{i,l-1}$ intersects with $T_{il}$ at the 4-vertex $w_{il}$ for $1\leq i\leq 3$, $1\leq j\leq k_i-1$, and $1\leq l\leq k_i$.
W.l.o.g., let $w_{11},w_{21},w_{31}$ locate in clockwise order around $T_0$, and so do $x_1,y_1,x_2,y_2,x_3,y_3$.

For $1\leq i\leq 3$, if $k_i\geq 2$, then the face $T_{i,k_i-1}$ is negative (since $G$ has no Configuration d-1 by Lemma \ref{lem_type-d}) and $t_{i,k_i-1}$ is not a bad vertex (since $G$ has no Configuration e-3 by Lemma \ref{lem_type-e}). Furthermore, if $k_i\geq 3$, then $t_{i,k_i-2}$ is not a bad vertex (again since $G$ has no Configuration e-3). 
Since $|3_{\Delta^{\star}}(S)|\leq 2$, it suffices to consider two cases: either $(k_1,k_2,k_3)=(1,1,k_3)$ or $(k_1,k_2,k_3)=(1,2,2)$.

Case d.1: assume that $(k_1,k_2,k_3)=(1,1,k_3)$. 
Since $G$ has neither Configuration g-1 nor Configuration g-2 by Lemma \ref{lem_type-g}, $k_3\geq 4$.
So, $t_{3,1},t_{3,2},\ldots,t_{3,k_3-3}$ are all bad vertices.
Since $G$ has no Configuration g-5 by Lemma \ref{lem_type-g}, $k_3\geq 8$.
By Lemma \ref{lem_J-segment}, the snowflake $S$ is related to at least three nice 9-faces, whose nice vertices send to $S$ a total charge at least $\frac{2}{27}\times 3$ by $R\ref{rule_nice face}$.
Therefore, $ch^*(S)\geq \frac{4}{27}\times 6+\frac{4}{27}\times 2-\frac{4}{3}+\frac{2}{27}\times 3>0.$

Case d.2: assume that $(k_1,k_2,k_3)=(1,2,2)$. 
Since $G$ has no Configuration g-3 by Lemma \ref{lem_type-g}, $T_{21}$ and $T_{31}$ must locate on the same side of $w_{22}w_{21}w_{31}w_{32}$. We distinguish two cases depending on which side.

Case d.2.1: let $y_2w_{22}w_{21}w_{31}w_{32}x_3$ belong to the boundary of some face $f_{23}$.
If $d(f_{23})\geq 10$, then $f_{23}$ sends to each vertex of $y_2w_{22}w_{21}w_{31}w_{32}x_3$ charge at least $\frac{6}{10}$ by $R\ref{rule-face}$, strengthening Formula \ref{eq_11} as $ch^*(S)\geq \frac{4}{27}\times 6+\frac{4}{27}\times 2-\frac{4}{3}+(\frac{6}{10}-\frac{5}{9})\times 6>0.$ We may next assume that $d(f_{23})=9$. Let $f_{23}=[y_2w_{22}w_{21}w_{31}w_{32}x_3s_1s_2s_3]$.
If neither $s_1$ nor $s_3$ is a $3_{\Delta^+}$-vertex, then both $x_3$ and $y_2$ send no charge to their outer neighbors, strengthening Formula \ref{eq_11} as $ch^*(S)\geq \frac{4}{27}\times 6+\frac{4}{27}\times 2-\frac{4}{3}+\frac{2}{27}\times 2=0.$ 
W.l.o.g., next let $s_1$ be a $3_{\Delta^+}$-vertex. Since $G$ has no Configuration e-2, $s_2$ must be a $3_{\Delta^+}$-vertex and hence, $s_3$ cannot be an internal 3-vertex. So, $y_2$ receives charge  $\frac{2}{27}$ from $s_3$, giving $ch^*(S)\geq \frac{4}{27}\times 6+\frac{4}{27}\times 2-\frac{4}{3}+(\frac{2}{27}+\frac{2}{27})=0.$

Case d.2.2: let $y_1w_{11}w_{21}w_{22}x_2$ (resp., $y_3w_{32}w_{31}w_{11}x_1$) belong to the boundary of some face $f_{12}$ (resp., $f_{13}$).
If $d(f_{12})\geq 10$, then $f_{12}$ sends to each vertex of $y_1w_{11}w_{21}w_{22}x_2$ charge at least $\frac{6}{10}$ by $R\ref{rule-face}$, strengthening Formula \ref{eq_11} as $ch^*(S)\geq \frac{4}{27}\times 6+\frac{4}{27}\times 2-\frac{4}{3}+(\frac{6}{10}-\frac{5}{9})\times 5>0.$ We may next assume that $d(f_{12})=9$ and similarly, $d(f_{13})=9$. Let $f_{12}=[y_1w_{11}w_{21}w_{22}x_2s_1s_2s_3s_4]$.
If both $s_1$ and $s_4$ are $3_{\Delta^+}$-vertices, say 3-faces $[s_1r_{12}s_2]$ and $[s_3r_{34}s_4]$.
It is easy to see that at least one of $y_2x_2s_1r_{12}$, $s_1s_2s_3s_4$, and $r_{34}s_4y_1x_1$ is Configuration e-2, contradicting Lemma \ref{lem_type-e}.
So, at least one of $s_1$ and $s_4$ is not a $3_{\Delta^+}$-vertex and consequently, at least one of $y_1$ and $x_2$ sends no charge to its outer neighbor.
Similarly, at least one of $x_1$ and $y_3$ sends no charge to its outer neighbor.
Therefore, Formula \ref{eq_11} can be strengthened as $ch^*(S)\geq \frac{4}{27}\times 6+\frac{4}{27}\times 2-\frac{4}{3}+\frac{2}{27}\times 2=0.$

{\bf Case e}: Clearly, $S$ consists of two $(4,4,4)$-faces, four negative $(3,3,4)$-faces, and  $(3,4,4)$-faces. Recall that $|3_{\Delta^{\star}}(S)|=0$.
Since $G$ has no Configuration e-3 by Lemma \ref{lem_type-e}, each $(4,4,4)$-face of $S$ must intersect with two negative $(3,3,4)$-faces.
Since $G$ has neither Configuration h-1 nor Configuration h-2 by Lemma \ref{lem_type-h}, $S$ must be a $J$-extension of Configuration h-1 for some $k\geq 4$.
By Lemma \ref{lem_J-segment}, $S$ is related to at least two nice 9-faces, whose nice vertices send to $S$ a total charge at least $\frac{2}{27}\times 2$ by $R\ref{rule_nice face}$.
Therefore, $ch^*(S)\geq \frac{4}{27}\times 8-\frac{4}{3}+\frac{2}{27}\times 2= 0.$

{\bf Case f}: We apply a similar argument as Case e. Clearly,
$S$ consists of one $(4,4,4)$-face, one positive $(3,3,4)$-face, two negative $(3,3,4)$-faces, and $(3,4,4)$-faces.
Since $G$ has no Configuration e-3 by Lemma \ref{lem_type-e}, the $(4,4,4)$-face of $S$ intersects with two negative $(3,3,4)$-faces.
Since $G$ has neither Configuration g-1 nor Configuration g-4 by Lemmas \ref{lem_type-g}, $S$ must be a $J_k$-extension of Configuration g-1 for some $k\geq 5$.
By Lemma \ref{lem_J-segment}, the snowflake $S$ is related to at least three nice 9-faces, whose nice vertices send to $S$ a total charge at least $\frac{2}{27}\times 3$ by $R\ref{rule_nice face}$.
Therefore, $ch^*(S)\geq \frac{8}{27}\times 2+\frac{4}{27}\times 4-\frac{4}{3}+\frac{2}{27}\times 3> 0.$
\end{proof}

\begin{claim}\label{claim_int_vertex}
	$ch^*(v)\geq 0$ for each internal $C$-vertex $v$ of $G$.
\end{claim}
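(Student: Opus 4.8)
The plan is to bound the final charge $ch^*(v)$ of an internal $C$-vertex $v$ of degree $d=d(v)$, whose initial charge is $d-4$. Since $v$ is internal it is not incident with $f_0$, so it \emph{gains} charge only through R3 (each incident $5^+$-face contributes $\frac{d(f)-4}{d(f)}\ge\frac15$), and it \emph{loses} charge only through R2 (charge $1$ to each incident $3$-face), R4 (at most $\frac29$ to each neighbour of which $v$ is the outer neighbour), R5 (at most $\frac{4}{27}$ if $v$ is a nice vertex), and R6 (to adjacent strings). The single fact that drives the whole count is
\[
(\ast)\qquad \text{every face sharing an edge with a $3$-face is a $9^+$-face.}
\]
This is immediate from $G\in\mathcal G$: if a $k$-face shares an edge $e$ with a $3$-face, then deleting $e$ from the two boundaries and joining the remaining paths yields a cycle of length $k+1$, so $k+1\notin\{4,6,8\}$; together with $k\notin\{4,6,8\}$ and $k\ge3$ (two triangles sharing an edge form a $4$-cycle) this forces $k\ge9$.

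Two consequences of $(\ast)$ give the escorting mechanism. First, the two faces flanking an incident $3$-face of $v$ are $9^+$-faces, and incident $3$-faces are pairwise non-consecutive around $v$ (consecutive ones are again excluded by $(\ast)$); hence each incident $3$-face costs $1$ but is flanked by two faces each feeding $v$ at least $\frac59$. Second, whenever $v$ sends charge via R4 to a neighbour $u$, that $u$ lies on a $3$-face $T_u=[uab]$ not containing $v$, and the two faces flanking the edge $uv$ share, at $u$, the edges $ua$ and $ub$ of $T_u$; by $(\ast)$ both are $9^+$-faces incident with $v$. Thus every R4-outflow is also escorted by $9^+$-faces.

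I would then split by degree. For $d\ge5$ the initial charge $d-4\ge1$ already absorbs the outflow: after assigning to each incident $3$-face and each R4-edge its escorting $9^+$-faces, the R3-inflow dominates and the small R5, R6 terms fit inside the remaining slack. The tight cases are $d=3$ and $d=4$. For $d=3$ there is no incident $3$-face, and the extremal situation is that $v$ is the outer neighbour of three \emph{bad} vertices; by the second consequence all three incident faces are then $9^+$-faces, so $v$ receives at least $3\cdot\frac59=\frac53$ and sends $3\cdot\frac29=\frac23$, giving exactly $-1+\frac53-\frac23=0$. Any lighter R4-recipient (a $3_{\Delta^+}$- or $3_{\Delta^-}$-neighbour contributes only $\frac{2}{27}$) strictly helps; moreover all three neighbours are $3$-vertices, so $v$ is adjacent to no $2$-vertex and R6 never fires, while an internal $3$-vertex is never a nice vertex (Definition~\ref{def-9face}) so R5 never fires. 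The degree-$4$ case is handled by the same escorting, organized by the number of incident $3$-faces, which is at most one (two would create a $4$-cycle).

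The main obstacle is the bookkeeping in the tight cases ($d=3$, and $d=4$ carrying an incident $3$-face), where the margin is $0$ or small. There one must check that the escorting $9^+$-faces are genuinely distinct faces (so their R3-contributions may be added without double counting) and that no extra outflow tips the balance below $0$. The delicate point is R6: a neighbour of $v$ sitting on a string is a $2$-vertex and hence neither a $3_\Delta$- nor a bad vertex, so each string-neighbour simultaneously removes a potential R4-recipient; this trade-off, together with $(\ast)$ forcing the face carrying the string to be large enough, keeps $ch^*(v)\ge0$. Lemma~\ref{lem_type-e} and the absence of separating $12^-$-cycles (Lemma~\ref{lem_separating-cycle}) are used only to confirm that the extremal all-bad configurations behave as claimed.
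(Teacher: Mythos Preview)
Your overall plan is close to the paper's approach---split by degree, use the fact that every face adjacent to a triangle is a $9^+$-face---but there is a genuine gap in the degree-$4$ analysis, and the R6 discussion is a distraction.

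First, R6 simply never applies to an internal vertex. A $2$-vertex lies on the boundary of $f_0$ and both of its neighbours are on $f_0$, so an internal vertex is never adjacent to a string. The paper dispatches R6 in one line; your paragraph about trade-offs between string-neighbours and R4-recipients is irrelevant here (it is only needed for the later claim about \emph{external} $C$-vertices).

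Second, and more seriously: the case $d(v)=4$ with one incident $3$-face and $n(v)\in\{1,2\}$ is tight with $ch^*(v)=0$, and your escorting argument alone does not close it. With three incident $9^+$-faces the inflow from R3 is $3\cdot\frac59=\frac53$; the outflow is $1$ to the $3$-face, up to $2\cdot\frac29$ via R4 to bad neighbours $y,z$, and up to $3\cdot\frac{4}{27}$ via R5 if $v$ happens to be a $2$-nice vertex of each of the three $9^+$-faces. Summing these naive upper bounds gives $ch^*(v)\ge -\tfrac29<0$. What rescues the count is a correlation you have not identified: if $y$ (say) is a \emph{bad} vertex---the only case in which R4 charges the full $\frac29$---then by Definition~\ref{def-9face} the face $f_1$ on the $y$-side cannot have $v$ as a $2$-nice vertex, so the R5 cost through $f_1$ is at most $\frac{2}{27}$, not $\frac{4}{27}$; and if both $y$ and $z$ are bad, then $v$ is not a $2$-nice vertex of the middle face $f_2$ either. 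This trade-off between the large R4 term and the large R5 term is exactly what brings the maximum combined outflow down to $\frac23$, matching the $\frac23$ surplus. Without it the argument does not go through.

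A minor side remark: your reason for why a degree-$4$ $C$-vertex has at most one incident $3$-face (``two would create a $4$-cycle'') is not quite right---two \emph{non-adjacent} triangles at $v$ create no $4$-cycle. The correct reason is simply that two non-adjacent incident triangles would make $v$ a $4_{\bowtie}$-vertex, contrary to the hypothesis that $v$ is a $C$-vertex.
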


\begin{proof}
Since $v$ is internal, $v$ is adjacent to no strings of $G$. So, $R\ref{rule-string}$ is not applicable to $v$. 
Denote by $n_3(v)$ and $n_9(v)$ the number of 3-faces and $9^+$-faces containing $v$, respectively. Let $n(v)$ be the number of $3_{\Delta}$-vertices whose outer neighbor is $v$.
Since $G\in \mathcal{G}$,
\begin{equation}\label{eq_row} 
\begin{split}
n_9(v)&\geq n(v)+n_3(v),  \\
d(v)&\geq n(v)+2n_3(v).
\end{split}
\end{equation}
The vertex $v$ sends charge 1 to each incident 3-face by $R$\ref{rule-C-vertex}, and charge at most $\frac{2}{9}$ to each $3_{\Delta}$-vertex whose outer neighbor is $v$ by $R$\ref{rule-A1-vertex}. Moreover, $v$ receives charge at least $\frac{5}{9}$ from each incident $9^+$-face by $R$\ref{rule-face}. Finally, recall that a nice 9-face is related to precisely one snowflake. If $v$ is a nice vertex of a nice 9-face $f$, then $v$ sends to the related snowflake of $f$ charge at most $\frac{4}{27}$ by $R\ref{rule_nice face}$. Therefore, we can conclude from above that
\begin{equation}\label{eq_ch_v}
\begin{split}
ch^*(v)&\geq d(v)-4-n_3(v)-\frac{2}{9}n(v)+(\frac{5}{9}-\frac{4}{27})n_9(v) \\
&=d(v)-4+\frac{2}{27}n(v)+\frac{11}{27}(n_9(v)-n_3(v))-\frac{8}{27}(n(v)+2n_3(v))\\ 
&\geq d(v)-4+\frac{2}{27}n(v)+\frac{11}{27}n(v)-\frac{8}{27}d(v)\\
&=\frac{19}{27}d(v)-4+\frac{13}{27}n(v), 
\end{split}
\end{equation}
where the second inequality uses Formula \ref{eq_row}.
By Lemma \ref{lem_min degree}, $d(v)\geq 3$.  We distinguish the following three cases.

{\bf Case 1}: assume that $d(v)\geq 5$.
 The last line of Formula \ref{eq_ch_v} gives $ch^*(v)\geq 0$ directly except when $d(v)=5$ and $n(v)=0$. 
 For this exceptional case, $n_3(v) \leq 2$, and $n_3(v)=2$ implies $n_9(v)=3$.
 So, the first line of Formula \ref{eq_ch_v} yields $ch^*(v)\geq 1-n_3(v)+\frac{11}{27}n_9(v)\geq 0$.

{\bf Case 2}: assume that $d(v)=4$. 
Since $n(v)\leq n_9(v)$ by Formula $\ref{eq_row}$, if $n_3(v)=0$, then the first line of Formula \ref{eq_ch_v} gives $ch^*(v)\geq -\frac{2}{9}n(v)+\frac{11}{27}n_9(v)\geq 0$.
Since $v$ is not a $4_{\bowtie}$-vertex, it remains to assume that $n_3(v)=1$.  
Denote by $f_1,f_2,f_3,f_4$ the faces containing $v$, and $w,x,y,z$ the neighbors of $v$, in the same cyclic order with $f_4=[vwx]$. Clearly, both $f_1$ and $f_3$ are $9^+$-faces. 
If $n(v)=0$, then $v$ cannot be a nice vertex by definition, strengthening the first line of Formula \ref{eq_ch_v} as $ch^*(v)\geq -1+\frac{5}{9}\times 2>0$.
Next, let $n(v)\in\{1,2\}$. Then $f_2$ is also a $9^+$-face.
Note that if $y$ (resp., $z$) is a bad vertex, then $v$ cannot be a 2-nice vertex of $f_1$ (resp., $f_3$). Also note that if both $y$ and $z$ are bad vertices, then $v$ cannot be a 2-nice vertex of $f_2$. Considering charges $v$ receives from incident $9^+$-faces and charges $v$ sends to $f_4$, to $y$ and $z$, and to snowflakes related to $f_1$, $f_2$ or $f_3$, we have $ch^*(v)\geq \frac{5}{9}\times 3-1-\max\{\frac{2}{9}\times 2+\frac{2}{27}\times 3, (\frac{2}{9}+\frac{2}{27})+(\frac{4}{27}\times 2+\frac{2}{27}), \frac{2}{27}\times 2+\frac{4}{27}\times 3\}=0$.

{\bf Case 3}: assume that $d(v)=3$. 
In this case, $n_3(v)=0$ and $v$ cannot be a nice vertex. Clearly, the faces incident with $v$ are one $5^+$-face and two $7^+$-faces if $n(v)=0$,
one $5^+$-face and two $9^+$-faces if $n(v)=1$, and three $9^+$-faces if $n(v)\geq 2$.
Therefore,
$ch^*(v)\geq -1+\min\{\frac{1}{5}+\frac{3}{7}\times 2, -\frac{2}{9}+\frac{1}{5}+\frac{5}{9}\times 2, -\frac{2}{9}\times 3+\frac{5}{9}\times 3\}=0$ by $R\ref{rule-C-vertex}$ and $R\ref{rule-face}$.
\end{proof}

\begin{claim}
	$ch^*(v)>0$ for each external $C$-vertex $v$ of $G$.
\end{claim}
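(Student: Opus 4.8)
The plan is to track, for a fixed external $C$-vertex $v$, every rule that moves charge to or from $v$, exploiting the one structural advantage external vertices enjoy: by $R\ref{rule-ext-face}$ the vertex receives $\frac43$ from $f_0$. Writing $d=d(v)$, the vertex starts with charge $d-4$, gains $\frac43$ from $f_0$ and $\frac{d(g)-4}{d(g)}$ from each incident $5^+$-face $g\neq f_0$ by $R\ref{rule-face}$, and pays out through $R\ref{rule-C-vertex}$ (to incident $3$-faces), $R\ref{rule-A1-vertex}$ (to $3_\Delta$-vertices having $v$ as outer neighbour), $R\ref{rule_nice face}$ (to snowflakes through nice $9$-faces on which $v$ is nice), and $R\ref{rule-string}$ (to strings adjacent to $v$). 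The essential difference from the internal estimate in Claim \ref{claim_int_vertex} is that $R\ref{rule-string}$ is now live: both ports of a string lie on $D$ and are hence external, so $v$ may have to feed a string. I would treat this as the genuinely new sink and organise the estimate around controlling it together with the $3$-face payments.

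Before the case split I would record four local facts. First, $v$ has exactly two edges on $D$ and its two $D$-neighbours are external, hence never internal $3_\Delta$-vertices. Second (the key geometric input): since $G\in\mathcal G$ has no $6$- or $8$-cycle and $3$-cycles are facial by Corollary \ref{cor-facial}, any face sharing an edge with an incident $3$-face of $v$ has size $\neq 5,7$, so it is a $9^+$-face; thus each triangle at $v$ is flanked, at $v$, by $9^+$-faces (or by $f_0$). Third, if $v$ lies on a $3$-face $[vc_1c_2]$ then neither $c_1$ nor $c_2$ has $v$ as its outer neighbour, because their only possible triangle through $v$ already contains $v$; hence $R\ref{rule-A1-vertex}$ sends nothing along $vc_1,vc_2$, and can act only along edges of $v$ that are not triangle-edges. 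Fourth, a nice $9$-face carries no $2$-vertex, so $R\ref{rule_nice face}$ and $R\ref{rule-string}$ never both draw on one face, while $f_0$ is neither a nice $9$-face nor an $R\ref{rule-string}$-host; the charge sent to a string on an incident face $g$ is at most $\frac{12-d(g)}{6d(g)}$ times its length, which Lemma \ref{lem_string} bounds by $\lfloor\frac{d(g)-1}{2}\rfloor-1$.

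With these facts the argument splits on $d$. For $d\geq 5$ the surplus $d-4+\frac43\geq\frac73$ easily dominates: at most $\lfloor d/2\rfloor$ triangles cost $1$ each, every triangle is repaid by a flanking $9^+$-face giving $\frac59$, and the residual $R\ref{rule-A1-vertex}$/$R\ref{rule_nice face}$/$R\ref{rule-string}$ payments are small, so $ch^*(v)>0$. For $d=4$ I would split on the number $n_3\in\{0,1,2\}$ of incident $3$-faces: when $n_3\geq1$ the second fact forces the flanking faces to be $9^+$ and the third fact kills $R\ref{rule-A1-vertex}$ along triangle-edges, leaving a margin (e.g. $n_3=2$ yields $ch^*(v)\geq\frac43-\frac59-\frac4{27}=\frac{17}{27}>0$), while $n_3=0$ pits $\frac43+\frac35$ against only small payments. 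For $d=3$: if $v$ lies on a $3$-face the remaining face is $9^+$ and $R\ref{rule-A1-vertex}$ is silent, giving $ch^*(v)\geq\frac13-\frac16=\frac16$; if $v$ lies on no $3$-face then both other faces are $5^+$, and the tight configuration of two incident $5$-faces each bearing a length-$1$ string plus a single $R\ref{rule-A1-vertex}$ payment of $\frac29$ still gives $ch^*(v)\geq-1+\frac43+\frac25-\frac7{15}-\frac29=\frac2{45}>0$.

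The main obstacle I anticipate is exactly the bookkeeping of $R\ref{rule-string}$ in the low-degree cases: unlike the internal estimate, strings can now drain a $3$- or $4$-vertex, and the binding inequalities (the $\frac2{45}$ for $d=3$, and the $d=4$, $n_3=1$ subcase where a $5$-face, a $9^+$-face, a triangle and possible strings all meet at $v$) are genuinely tight. Hence I must show that the various maximal payments cannot co-occur—most importantly that a nice $9$-face excludes strings and that a triangle forces each neighbouring face up to size at least $9$. Getting this forbidden-cycle forcing and these mutual exclusions exactly right, rather than bounding crudely, is where the real work lies.
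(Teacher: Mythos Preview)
Your overall strategy---gain $\frac{4}{3}$ from $f_0$, bound the string sink $R\ref{rule-string}$, and case-split on $d(v)$ and $n_3(v)$---is exactly the paper's. The paper streamlines this by first proving a uniform bound of $\frac{1}{4}$ per adjacent string (your Lemma~\ref{lem_string} estimate, optimised via AM--GM), and then writing a single master inequality
\[
ch^*(v)\ \geq\ d(v)-4-n_3(v)-\tfrac{2}{9}n(v)+\bigl(\tfrac{5}{9}-\tfrac{4}{27}\bigr)(n_9(v)-1)+\tfrac{4}{3}-\tfrac{1}{4}\cdot 2,
\]
which disposes of all cases except $(d,n_3)\in\{(4,2),(3,\cdot)\}$, handled separately. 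This is tidier than your face-size-dependent bookkeeping but not conceptually different.

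Where your write-up needs repair is the $d=3$ analysis. Your ``tight configuration'' of two incident $5$-faces plus a live $R\ref{rule-A1-vertex}$ payment is internally inconsistent on two counts: (i) two $5$-faces sharing the edge $vc$ produce an $8$-cycle, forbidden in $\mathcal{G}$; (ii) if $R\ref{rule-A1-vertex}$ is live then the internal neighbour $c$ is a $3_\Delta$-vertex, which forces \emph{both} faces through $vc$ to be $9^+$, not $5$. So your displayed inequality $-1+\frac{4}{3}+\frac{2}{5}-\frac{7}{15}-\frac{2}{9}=\frac{2}{45}$ is not a bound on any actual configuration, and as written it also omits $R\ref{rule_nice face}$. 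The paper splits correctly: if $c$ is a $3_\Delta$-vertex then both faces are $9^+$ and one checks $ch^*(v)\geq -1+\frac{4}{3}+\frac{10}{9}-\frac{1}{2}-\frac{2}{9}-\frac{8}{27}>0$; if $c$ is not a $3_\Delta$-vertex then $R\ref{rule-A1-vertex}$ is silent \emph{and} $v$ cannot be a nice vertex (since every nice-vertex pattern forces an internal $3_\Delta$-neighbour on the $9$-face), whence $ch^*(v)\geq -1+\frac{4}{3}+\frac{1}{5}+\frac{3}{7}-\frac{1}{2}>0$. Similarly, in your $d=3$ triangle case you drop $R\ref{rule_nice face}$ without justification; the paper observes that $v$ is not nice here either (because $v$ lies on $c$'s triangle, so $v$ is not $c$'s outer neighbour). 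These are the ``mutual exclusions'' you flag at the end---you have identified the right phenomenon, but your sample computations do not yet implement it.
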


\begin{proof}
By Lemma \ref{lem_string} and the rule $R$\ref{rule-string}, $v$ sends to the vertices of each adjacent string a total charge at most 
\begin{equation}\label{eq_charge2s}
\begin{split}
\frac{12-d(f)}{6d(f)}\times (\lfloor \frac{d(f)-1}{2} \rfloor-1)&\leq \frac{(12-d(f))(d(f)-3)}{12d(f)}=\frac{5}{4}-(\frac{d(f)}{12}+\frac{3}{d(f)})\\
&\leq \frac{5}{4}-2\sqrt{\frac{d(f)}{12}\times \frac{3}{d(f)}}= \frac{1}{4}, 
\end{split}
\end{equation}  
where $f$ is the face (other than $f_0$) containing this string.

Take the notation $n_3(v)$, $n_9(v)$, and $n(v)$ for the same meaning as in the proof of Claim \ref{claim_int_vertex}. So, Formula \ref{eq_row} still holds.
However, Formula \ref{eq_ch_v} should be changed to 
\begin{equation}\label{eq_ch_v_ext}
	\begin{split}
		ch^*(v)&\geq d(v)-4-n_3(v)-\frac{2}{9}n(v)+(\frac{5}{9}-\frac{4}{27})(n_9(v)-1)+\frac{4}{3}-\frac{1}{4}\times 2\\
            &\geq d(v)-4-\frac{16}{27}n_3(v)+\frac{5}{27}n(v)+\frac{23}{54} \quad \quad \text{(by using $n_9(v)\geq n(v)+n_3(v)$)}\\
		&\geq \frac{19}{27}d(v)-4+\frac{13}{27}n(v)+\frac{23}{54}. \quad \quad \text{(by using $n_3(v)\leq \frac{d(v)-n(v)}{2}$)} 
	\end{split}
\end{equation}
This is because $v$ receives from $f_0$ charge $\frac{4}{3}$ and additionally, $v$ sends a total charge at most $\frac{1}{4}\times 2$ to the vertices of adjacent strings by Formula \ref{eq_charge2s}.

Therefore, the conclusion $\ch^*(v)\geq 0$ follows directly from the last line of Formula \ref{eq_ch_v_ext} when either $d(v)\geq 6$ or $d(v)=5$ and $n(v)\geq 1$, follows from the second line of Formula \ref{eq_ch_v_ext} when either $(d(v),n(v))=(5,0)$ (since $n_3(v)\leq 2$ in this case) or $(d(v),n_3(v))=(4,0)$, and follows from the first line of Formula \ref{eq_ch_v_ext} when $(d(v),n_3(v))=(4,1)$ (since $n_9(v)\geq \min\{2+n(v),3\}$ in this case). So, it remains to consider the following two cases:

Firstly, assume that $(d(v),n_3(v))=(4,2)$. In this case, $v$ cannot be a nice vertex by definition.
If $d(f_0)=3$, then $v$ receives charge $\frac{4}{3}$ from $f_0$ and charge $\frac{5}{9}$ from each incident $9^+$-face, and $v$ sends charge $1$ to the other incident 3-face and charge at most $\frac{1}{4}$ to each adjacent string, giving $ch^*(v)\geq d(v)-4+\frac{4}{3}+\frac{5}{9}\times 2-1-\frac{1}{4}\times 2>0$. We may next assume that $d(f_0)\neq 3$, i.e., $f_0$ is one of the $9^+$-faces containing $v$. So, $v$ receives charge $\frac{4}{3}$ from $f_0$ and charge $\frac{5}{9}$ from the other incident $9^+$-face, and $v$ sends to each incident 3-face charge $\frac{5}{9}$ by $R\ref{rule-C-vertex}$, giving $ch^*(v)\geq d(v)-4+\frac{4}{3}+\frac{5}{9}-\frac{5}{9}\times 2>0$.

Secondly, assume that $d(v)=3$. 
Denote by $f_1$ and $f_2$ the two faces (besides $f_0$) containing $v$ with $d(f_1)\leq d(f_2)$. 
First let $d(f_1)=3$. Then $d(f_2)\geq 9$ and $v$ is not a nice vertex.
Note that $v$ receives charge $\frac{4}{3}$ from $f_0$ by $R\ref{rule-ext-face}$ and charge at least $\frac{5}{9}$ from $f_2$ by $R\ref{rule-face}$, and $v$ sends charge $\frac{5}{9}$ to $f_1$ by $R\ref{rule-C-vertex}$ and charge at most $\frac{1}{4}$ to one adjacent string (if exists) by $R$\ref{rule-string}, giving
$ch^*(v)\geq d(v)-4+\frac{4}{3}+\frac{5}{9}-\frac{5}{9}-\frac{1}{4}>0$.
Next let $d(f_1)\geq 5$. Since $G\in \mathcal{G}$, $f_2$ is a $7^+$-face. So, $v$ receives a total charge at least $\frac{4}{3}+\frac{1}{5}+\frac{3}{7}$ from incident faces. Moreover, $v$ sends to adjacent strings (if exist) a total charge at most $\frac{1}{4}\times 2$.
If the internal neighbor of $v$ is not a $3_{\Delta}$-vertex, then $v$ sends no charge to this neighbor and $v$ is not a nice vertex of $f_1$ or $f_2$, yielding $ch^*(v)\geq d(v)-4+\frac{4}{3}+\frac{1}{5}+\frac{3}{7}-\frac{1}{4}\times 2>0$; 
otherwise, both $f_1$ and $f_2$ are $9^+$-faces, yielding $ch^*(v)\geq d(v)-4+\frac{4}{3}+\frac{5}{9}+\frac{5}{9}-\frac{1}{4}\times 2-\frac{2}{9}-\frac{4}{27}\times 2>0$.
\end{proof}

\begin{claim}
	$ch^*(v)\geq 0$ for each 2-vertex $v$ of $G$. 
\end{claim}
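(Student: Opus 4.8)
The plan is to show that a $2$-vertex only ever \emph{receives} charge during the discharging, and that what it receives already compensates its initial charge $ch(v)=d(v)-4=-2$. By Lemma~\ref{lem_min degree} every $2$-vertex $v$ is external, so $v$ lies on $D$ (the boundary of $f_0$) and both edges at $v$ lie on $D$. First I would rule out $v$ being a sender under any rule. It is not a $C$-vertex, a $3_{\Delta}$-vertex, a $4_{\bowtie}$-vertex, nor (being a $2$-vertex) a nice vertex, so $R\ref{rule-C-vertex}$ and $R\ref{rule_nice face}$ never make it a sender; in $R\ref{rule-A1-vertex}$ every potential sender is required to be a $3_{\Delta^-}$-, $3_{\Delta^{\circ}}$- or $C$-vertex, a status a $2$-vertex cannot have even if it happens to be the outer neighbour of some $3_{\Delta}$-vertex; and in $R\ref{rule-string}$ only the degree-$\ge 3$ vertices adjacent to a string send, whereas $v$ itself belongs to a string.

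Next I would pin down the two faces at $v$, namely $f_0$ and one inner face $f$. Since $D$ has no chords and $G\in\mathcal G$ excludes $4$-, $6$- and $8$-cycles (hence $4$-, $6$- and $8$-faces), the face $f$ can be neither a triangle — its two boundary edges at $v$ would force a chord of $D$ between the two neighbours of $v$ — nor a $4$-, $6$- or $8$-face, so $f$ is a $5^+$-face with $f\ne f_0$. Thus $v$ receives $\tfrac43$ from $f_0$ by $R\ref{rule-ext-face}$ and $\tfrac{d(f)-4}{d(f)}$ from $f$ by $R\ref{rule-face}$. I would then use that $v$ lies in a unique maximal string $L$; a degree-$2$ count shows that $L$ and its two end-neighbours $a,b$ all lie on the boundary of $f$, and Lemma~\ref{lem_string}, which forces the length of $L$ strictly below $\lfloor (d(f)-1)/2\rfloor$, guarantees $a\ne b$ (otherwise the boundary of $f$ would consist of $L$ together with a single vertex, making $L$ too long).

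The core of the argument is a case split on $d(f)$. If $d(f)\le 11$, then $R\ref{rule-string}$ applies and each of $a,b$ sends $\tfrac{12-d(f)}{6d(f)}$ to $v$, so the charge $v$ collects from $f$ and from $a,b$ is
\[
\frac{d(f)-4}{d(f)}+2\cdot\frac{12-d(f)}{6d(f)}
=\frac{3\,(d(f)-4)+(12-d(f))}{3d(f)}=\frac23,
\]
independently of $d(f)$. If $d(f)\ge 12$, then $R\ref{rule-string}$ does not fire, but $\tfrac{d(f)-4}{d(f)}=1-\tfrac{4}{d(f)}\ge\tfrac23$ already. In either case $ch^*(v)\ge -2+\tfrac43+\tfrac23=0$, which is the claim.

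The step I expect to be the most delicate is the bookkeeping of $R\ref{rule-string}$: the exact cancellation to $\tfrac23$ depends on $v$ receiving the string charge \emph{twice}, once from each distinct end-neighbour, which is precisely why $a\ne b$ (and hence Lemma~\ref{lem_string}) is needed here. The only other point requiring care is the exhaustive verification that $v$ is never a sender, in particular the subtle case of $R\ref{rule-A1-vertex}$ in which $v$ might be the outer neighbour of a $3_{\Delta}$-vertex yet still fails the sender hypotheses.
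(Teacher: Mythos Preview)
Your proof is correct and follows essentially the same approach as the paper's: split on whether $d(f)\le 11$ (where $R\ref{rule-string}$ exactly compensates the shortfall) or $d(f)\ge 12$ (where $R\ref{rule-face}$ alone suffices), and invoke Lemma~\ref{lem_string} to ensure the two string-neighbours are distinct. You include more explicit checks that $v$ is never a sender and that $f$ is a $5^+$-face (points the paper leaves implicit), but the substance is identical.
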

\begin{proof}
	Let $f$ be the face containing $v$ other than $f_0$.
	Clearly, $v$ receives charge $\frac{4}{3}$ from $f_0$ by $R$\ref{rule-ext-face} and charge $\frac{d(f)-4}{d(f)}$ from $f$ by $R$\ref{rule-face}, which gives $ch^*(v)\geq d(v)-4+\frac{4}{3}+\frac{d(f)-4}{d(f)}=\frac{d(f)-12}{3d(f)}\geq 0$, provided by $d(f)\geq 12$.
	Next, let $d(f)\leq 11$. Denote by $L$ the string containing $v$.
	Lemma \ref{lem_string} implies that the two vertices adjacent to $L$ do not coincide, and they send to $v$ a total charge $\frac{12-d(f)}{6d(f)}\times 2$ by $R$\ref{rule-string}. So, $ch^*(v)\geq \frac{d(f)-12}{3d(f)}+\frac{12-d(f)}{6d(f)}\times 2=0.$
\end{proof}

\begin{claim}\label{claim_f_0}
	$ch^*(f_0)\geq 0$.
\end{claim}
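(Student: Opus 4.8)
The plan is to observe that $f_0$ participates in the discharging only as a sender, and only through rule $R\ref{rule-ext-face}$. First I would inspect each of the six rules $R\ref{rule-ext-face}$ through $R\ref{rule-string}$ to confirm that no rule ever moves charge \emph{onto} $f_0$, and that $f_0$ gives away charge exclusively via $R\ref{rule-ext-face}$. Indeed, rules $R\ref{rule-C-vertex}$, $R\ref{rule-face}$, and $R\ref{rule-string}$ each explicitly concern a face $f\neq f_0$, while $R\ref{rule-A1-vertex}$ and $R\ref{rule_nice face}$ transfer charge only between vertices, or from a vertex to a snowflake; none of these touches $f_0$. Consequently the entire change in the charge of $f_0$ is the amount $\frac{4}{3}$ it sends to each incident vertex under $R\ref{rule-ext-face}$.

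Next I would count these recipients. By Lemma \ref{lem_2connected}, $G$ is $2$-connected and the boundary $D$ of $f_0$ is a cycle; hence the vertices incident with $f_0$ are exactly the vertices of $D$, and their number equals $|D|=d(f_0)$. Thus $f_0$ sends out a total charge of $\frac{4}{3}d(f_0)$, and since $ch(f_0)=d(f_0)+4$,
$$ch^*(f_0)=d(f_0)+4-\frac{4}{3}d(f_0)=4-\frac{d(f_0)}{3}.$$
Finally I would invoke the standing hypothesis that the boundary of the infinite face has length at most $12$, so that $d(f_0)=|D|\le 12$, whence $ch^*(f_0)\ge 4-\frac{12}{3}=0$, as required.

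There is essentially no obstacle: the only point demanding care is the bookkeeping verification that every transfer in $R\ref{rule-C-vertex}$--$R\ref{rule-string}$ either avoids $f_0$ entirely or is restricted to faces different from $f_0$, so that the single outgoing transfer in $R\ref{rule-ext-face}$ accounts for the whole discrepancy. Once that is confirmed, the claim is an immediate arithmetic consequence of $d(f_0)\le 12$; this is precisely the step at which the length bound on the outer boundary---the hypothesis of Theorem \ref{thm_main_extension}---is consumed, which is why the constant $12$ and the coefficient $\frac{4}{3}$ in $R\ref{rule-ext-face}$ are calibrated to meet exactly at $ch^*(f_0)=0$.
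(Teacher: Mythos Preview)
Your proposal is correct and follows essentially the same approach as the paper: the paper's proof simply notes that $R\ref{rule-ext-face}$ is the only rule by which $f_0$ loses charge, computes $ch^*(f_0)=d(f_0)+4-\frac{4}{3}d(f_0)=4-\frac{d(f_0)}{3}$, and concludes using $d(f_0)\le 12$. Your additional care in invoking Lemma~\ref{lem_2connected} to equate the number of incident vertices with $d(f_0)$, and in checking each rule explicitly, is sound but more detailed than what the paper records.
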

\begin{proof}
	Since $R$\ref{rule-ext-face} is the only rule making $f_0$ move charge out, we have $ch^*(f_0)=d(f_0)+4-\frac{4}{3}\times d(f_0)=4-\frac{d(f_0)}{3}\geq 0$, since $d(f_0)\leq 12$.
\end{proof}

\begin{claim}\label{claim_f}
	$ch^*(f)\geq 0$ for each {$5^+$-face} $f$ of $G$ with $f\neq f_0$.
\end{claim}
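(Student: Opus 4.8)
The plan is to show that the initial charge of any $5^+$-face $f\neq f_0$ is returned to zero by exactly one rule, namely $R\ref{rule-face}$, while every other rule leaves the charge of $f$ untouched. Recall $ch(f)=d(f)-4$ for such a face. First I would go through the six rules and check which of them can move charge into or out of $f$. Rules $R\ref{rule-ext-face}$, $R\ref{rule-C-vertex}$, and $R\ref{rule-A1-vertex}$ never involve a $5^+$-face as sender or receiver: they transfer charge between $f_0$ and its incident vertices, between $C$-vertices and incident $3$-faces, and between $3_{\Delta}$-vertices and their outer neighbors, respectively. Rule $R\ref{rule-string}$ moves charge only between vertices (a vertex adjacent to a string and the string's $2$-vertices). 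Thus the only rules that even mention $f$ as a charge-carrier are $R\ref{rule-face}$ and $R\ref{rule_nice face}$.

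Next I would argue that $R\ref{rule_nice face}$ is net-neutral for $f$. By the wording of that rule, when $f$ is a nice $9$-face a nice vertex of $f$ sends charge ($\frac{2}{27}$ or $\frac{4}{27}$) to the related snowflake $S$ of $f$ \emph{through} $f$; so $f$ receives that amount from the nice vertex and forwards exactly the same amount to $S$, and its net balance under $R\ref{rule_nice face}$ is zero. Consequently $R\ref{rule-face}$ is the only rule that changes $ch^*(f)$.

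Then I would compute the outflow under $R\ref{rule-face}$. By Lemma \ref{lem_2connected}, $G$ is $2$-connected and the boundary of $f$ is a cycle, so $f$ has exactly $d(f)$ distinct incident vertices (each appearing once, with no vertex repeated on the boundary). Rule $R\ref{rule-face}$ sends $\frac{d(f)-4}{d(f)}$ to each incident vertex, $2$-vertices included, so the total charge leaving $f$ is $d(f)\cdot\frac{d(f)-4}{d(f)}=d(f)-4$. Since this equals the initial charge $ch(f)=d(f)-4$, we get $ch^*(f)=0\ge 0$, as required.

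The only point demanding care — and it is bookkeeping rather than a genuine obstacle — is verifying the two claims above: that $R\ref{rule_nice face}$ leaves $f$'s net charge unchanged, and that the incidence count is exactly $d(f)$ with no repetition, which is precisely where the $2$-connectivity from Lemma \ref{lem_2connected} is invoked. Once these are confirmed, the identity $ch^*(f)=ch(f)-(d(f)-4)=0$ is immediate and finishes the claim.
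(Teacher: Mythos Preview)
Your proposal is correct and follows essentially the same approach as the paper's proof: the paper simply states that $R\ref{rule-face}$ is the only rule making $f$ move charge out and computes $ch^*(f)=d(f)-4-\frac{d(f)-4}{d(f)}\cdot d(f)=0$. Your version is more detailed (explicitly checking each rule, interpreting ``through $f$'' in $R\ref{rule_nice face}$ as net-neutral for $f$, and invoking Lemma~\ref{lem_2connected} for the incidence count), but the substance is identical.
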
  
\begin{proof}
	Since $R$\ref{rule-face} is the only rule making $f$ move charge out, we have $ch^*(f)=d(f)-4-\frac{d(f)-4}{d(f)}\times d(f)=0.$
\end{proof}

As a counterexample to Theorem \ref{thm_main_extension-signed}, $(G,\sigma)$ must contain an external $C$-vertex. So by Claims \ref{claim_snow}--\ref{claim_f}, we have $\sum_{x\in V\cup F}ch^*(x)>0$, completing the proof of Theorem \ref{thm_main_extension-signed}.

\section{Acknowledgement}
Ligang Jin is supported by NSFC 11801522, U20A2068.
Yingli Kang is supported by NSFC 11901258 and ZJNSF LY22A010016.
Xuding Zhu is supported by  {NSFC 12371359, U20A2068.}

\end{document}